\documentclass{article}
\usepackage[utf8]{inputenc}

\usepackage{tikz}
\usetikzlibrary{arrows.meta, positioning, calc}
\usepackage[normalem]{ulem}
\usepackage{multirow}
\usepackage{natbib}
\usepackage{graphicx}
\usepackage{float}
\usepackage{epstopdf}
\usepackage{amsmath}
\usepackage{amssymb}
\usepackage{amsfonts}
\usepackage{amsthm}
\usepackage{xcolor}
\usepackage{subeqnarray} 
\usepackage[a4paper, total={6in, 9in}]{geometry}
\usepackage[show]{ed}
 \usepackage{hyperref}
 \usepackage{multirow}
 \usepackage{siunitx}
\DeclareSIUnit\mt{\milli\tesla} 
\usepackage{tasks}
\usepackage{xcolor}
\usepackage{colortbl}

\usepackage[ruled,vlined]{algorithm2e} 

\usepackage{subcaption}

\usepackage{booktabs}

\newcommand{\bU}{{\mathbf{U}}}

\newcommand{\bx}{{\mathbf{x}}}

\newcommand{\bu}{{\mathbf{u}}}
\newcommand{\bv}{{\mathbf{v}}}

\newcommand{\br}{{\mathbf{r}}}

\newcommand{\bomega}{{\boldsymbol{\omega}}}

\newcommand{\tr}{{\operatorname{tr}}}
\newcommand{\Reynolds}{{\operatorname{Re}}}

\newcommand\tu{{\tilde{\mathbf{u}}}}

\newcommand\tU{{\tilde{\mathbf{U}}}}

\newcommand\tp{{\tilde{{p}}}}

\newcommand\tomega{{\tilde{\boldsymbol{\omega}}}}

\newtheorem{theorem}{Theorem}[section]
\newtheorem{problem}{Problem}
\newtheorem{lemma}[theorem]{Lemma}

\newtheorem{remark}[theorem]{Remark}

\newtheorem{assumption}[theorem]{Assumption}
\newtheorem{corollary}[theorem]{Corollary}

\newcommand{\be}{\begin{eqnarray}}
\newcommand{\ee}{\end{eqnarray}}
\newcommand{\ben}{\begin{eqnarray*}}
\newcommand{\een}{\end{eqnarray*}}

\title{High-order DLM-ALE discretizations with robust operator preconditioning for  fluid-rigid-body interaction} 
\author{}

\usepackage{authblk}

\author[1,2]{Qi Xin }
\author[1,2]{Shihua Gong \thanks{Corresponding author: gongshihua@cuhk.edu.cn}}
\author[3]{Lingyue Shen } 
\author[3]{Pinjing Wen } 
\author[3]{Yumiao Zhang } 
\author[3]{Yan Chen}
\author[3]{Jiarui Han }
\author[4]{Jinchao Xu}

\affil[1]{School of Science and Engineering, The Chinese University of Hong Kong, Shenzhen, Guangdong 518172, China}
\affil[2]{Shenzhen International Center for Industrial and Applied Mathematics, Shenzhen Research Institute of Big Data, Guangdong 518172, China}
\affil[3]{Shenzhen Raymind Biotechnology Co., Ltd., Shenzhen, Guangdong 518129, China}
\affil[4]{Applied Mathematics and Computational Sciences, CEMSE Division, King Abdullah University of Science and Technology, Thuwal 23955, Saudi Arabia}

\begin{document}
\maketitle
\begin{abstract}
Motivated by the design of deterministic lateral displacement (DLD) microfluidic devices, we develop a high-order numerical framework for fluid-rigid-body interaction on fitted moving meshes. Rigid-body motion is enforced by a distributed Lagrange multiplier (DLM) formulation, while the moving fluid domain is treated by an arbitrary Lagrangian-Eulerian (ALE) mapping. In space, we use isoparametric Taylor-Hood elements to achieve high-order accuracy and to represent curved boundaries and the fluid-particle interface. In time, we employ a high-order partitioned Runge-Kutta strategy in which the mesh motion is advanced explicitly and the coupled physical fields are advanced implicitly, yielding high-order accuracy for the particle trajectory. The fully coupled system is linearized into a generalized Stokes problem subject to distributed constraints of incompressibility and rigid-body motion. We establish well-posedness of this generalized Stokes formulation at both the continuous and discrete levels, providing the stability foundation for operator preconditioning that is robust with respect to key physical and discretization parameters. Numerical experiments on representative benchmarks, including a DLD case, demonstrate high-order convergence for the fluid solution and rigid-body dynamics, as well as robust iterative convergence of the proposed preconditioners.
\end{abstract}

\section{Introduction}
Deterministic Lateral Displacement (DLD) chips have become   a versatile microfluidic platform in biomedical research and diagnostics \cite{huang2004continuous}. In particular, they have been used for the  isolation of circulating tumor cells and separation of blood components \cite{xiang2019precise,liu2018integrated,tang2022topology,gioe2022deterministic,hochstetter2020deterministic}. The underlying principle of DLD relies on a staggered array of micro-pillars placed in a  microchannel, which induces a size-dependent lateral displacement of suspended particles \cite{hochstetter2020deterministic}. As particles traverse  the microchannel, their trajectories bifurcate depending on whether their size is below or above a  critical  diameter, enabling continuous, label-free, and high-throughput separation without external fields.

The critical diameter is a central design quantity in DLD (also called critical particle size), i.e., the particle size at which the trajectory transitions from a zigzag mode to a displaced mode. This threshold is not a material constant; rather, it emerges from the interaction between pillar geometry (shape, spacing, diameter, and stagger angle) and the hydrodynamics of particle-array interaction, and is therefore tunable but difficult to predict a priori. Empirical guidelines and simplified models, often based on ``bump-and-go'' or ``displacement-mode'' assumptions, provide useful intuition but do not fully account for the hydrodynamic coupling between particles and the micro-pillar array  \cite{rezaei2021numerical, pariset2017anticipating}. This motivates high-fidelity numerical simulation as a complementary tool for DLD design.

To resolve particle-array hydrodynamic coupling, we model the particle as a rigid body and represent the fluid-solid interface sharply. This choice is consistent with typical DLD operating conditions, where size-dependent hydrodynamic interactions and geometric effects dominate the separation mechanism and particle deformation is often of secondary importance. The rigid-particle model therefore offers a physically sound and computationally efficient approximation for predicting lateral displacement in DLD devices \cite{rezaei2021numerical,tang2022topology,murmura2019space}.

Accurate prediction of particle trajectories in DLD requires reliable resolution of the fluid-solid interface dynamics, including the kinematic coupling and the associated hydrodynamic forces and torques. A broad class of fluid-structure interaction (FSI) methods has been developed for such problems, ranging from sharp-interface approaches to diffuse-interface formulations. Representative examples include arbitrary Lagrangian-Eulerian (ALE) methods \cite{hirt1974arbitrary,hughes1981lagrangian,xu2015well,li2021energy}, which employ moving meshes to preserve geometric conformity; immersed boundary methods (IBM) \cite{peskin2002immersed,boffi2015finite,boffi2018distributed}, which embed the solid boundary into a fixed fluid grid via distributed forcing; distributed Lagrange multiplier / fictitious domain (DLM/FD) methods \cite{boffi2022existence,boffi2021existence,glowinski1999distributed,glowinski2001fictitious}, which enforce the coupling weakly in a variational framework; and enriched finite element methods \cite{hollbacher2019rotational,hollbacher2020gradient,hollbacher2021sharp}, which incorporate rigid-body motions through enriched trial and test spaces.

In this work, we build on the DLM/FD framework and its variational structure, which naturally leads to a monolithic formulation with saddle-point features. Although the full Navier-Stokes operator yields a nonsymmetric system, the coupled variational formulation provides a convenient setting for analysis and for the construction of operator preconditioners suitable for large-scale computations \cite{boffi2021existence,boffi2022existence,wang2017fictitious,wang2021augmented}.

Our discretization departs from the classical two-grid DLM/FD methodology. In the classical approach, the multiplier is defined on a fictitious domain and one typically interpolates between non-matching fluid and solid meshes. Here we propose a fitted-mesh DLM discretization, in which a single conforming mesh defines the computational domain and the Lagrange multiplier is defined on the solid region and discretized directly on the same mesh. This avoids the interpolation-induced smearing inherent to two-grid coupling. In DLD applications, where the critical diameter is sensitive to trajectory perturbations, such smearing can introduce non-negligible bias in trajectory and threshold prediction. The fitted-mesh DLM formulation therefore combines the variational consistency of DLM with a sharp interface representation that is essential for accurate particle-pillar  hydrodynamic  interactions.

To address these modeling and computational requirements in a unified manner---namely, sharp interface resolution, high-order accuracy, and scalable solvers---we develop a fitted-mesh DLM-ALE framework equipped with operator-based preconditioning. The main contributions of this work are:
\begin{enumerate}
\item  a fitted-mesh DLM discretization for fluid-rigid-body interaction, in which the Lagrange multipliers are discretized on the solid region using the same conforming mesh, eliminating interpolation between non-matching meshes and preserving a sharp fluid-solid interface;
\item  an ALE-based moving-mesh formulation coupled with isoparametric Taylor-Hood elements, enabling high-order geometric representation of curved boundaries and the fluid-particle interface;
\item  a high-order partitioned IMEX Runge-Kutta time integrator that advances mesh motion explicitly while treating the DLM subsystem (momentum balance with incompressibility and rigid-body constraints) implicitly, delivering high-order accurate particle trajectories at reduced cost compared with fully implicit schemes;
\item  a continuous and discrete well-posedness theory for the resulting linearized generalized Stokes system, which provides the stability foundation for robust operator preconditioning;
\item  numerical evidence on benchmark problems and a DLD configuration demonstrating high-order convergence for the fluid solution and rigid-body dynamics, together with robust GMRES convergence of the proposed preconditioners.
\end{enumerate}

The fitted-mesh strategy requires the mesh to conform to the evolving particle boundary. We achieve this by coupling the DLM formulation with an ALE description, in which the mesh motion is computed and updated alongside the physical fields. The resulting formulation is nonlinear and strongly coupled. Fully implicit monolithic strategies offer strong stability but can be computationally demanding \cite{barker2010two,wu2014fully,kong2019simulation,li2021energy}. To reduce cost while preserving robustness, we adopt a partitioned implicit-explicit (IMEX) Runge-Kutta strategy, treating the mesh motion as a non-stiff component and the coupled physical fields as the stiff component. Concretely, the mesh update at each stage is performed explicitly from known stage values, whereas the DLM subsystem—Navier-Stokes momentum balance together with incompressibility and rigid-body constraints—is advanced implicitly. Building on this IMEX partitioning, we design a high-order temporal discretization that enables accurate long-time tracking of particle trajectories at reduced cost.

Efficient solution of the coupled saddle-point linear systems is pivotal for large-scale simulations. In the low-Reynolds-number regime relevant to DLD devices, the convective term can be neglected for the purpose of preconditioner construction. We therefore analyze the well-posedness of the corresponding linearized generalized Stokes system with distributed incompressibility and rigid-body constraints. Our analysis extends the classical theory for Stokes-type saddle-point problems \cite{bramble1997iterative} to the present DLM setting, following the abstract framework developed in \cite{boffi2021existence,boffi2022existence}. The resulting stability estimates motivate a robust block preconditioner based on operator preconditioning ideas for saddle-point systems \cite{loghin2004analysis,Wathen_2015,Benzi_Golub_Liesen_2005,mardal2011preconditioning}.

The remainder of the paper is organized as follows. Section 2 introduces the governing equations and coupling conditions for the fluid-rigid-body interaction model. Section 3 presents the DLM-based variational formulation. Section 4 describes the spatial and temporal discretizations, including isoparametric finite elements, the ALE moving-mesh strategy, and the high-order partitioned Runge-Kutta scheme. Section 5 establishes well-posedness for the linearized problem and develops operator-based preconditioners for the resulting algebraic systems. Section 6 reports numerical results on benchmark problems and a DLD configuration. Section 7 concludes the paper and discusses future directions.

\section{Mathematical model}

This section details the mathematical model for the interaction between a low-Reynolds-number incompressible fluid and a rigid solid. Let $\Omega_f^t \subset \mathbb{R}^d$ and $\Omega_s^t \subset \mathbb{R}^d$ (with $d = 2$ or $3$) denote the time-dependent fluid and solid domains, respectively. These domains evolve within a fixed computational domain $\Omega$ such that $\Omega = \Omega_f^t \cup \Gamma_I^t \cup \Omega_s^t$ for all $t$, where $\Gamma_I^t = \partial \Omega_f^t \cap \partial \Omega_s^t$ is the moving fluid-solid interface. The rigid body is assumed to be fully immersed and non-contact with the external boundary, i.e., $\partial\Omega_s^t \cap \partial \Omega = \emptyset$.

The fluid motion is governed by the incompressible Navier--Stokes equations:
\begin{equation}\label{eq:fluid}
    \begin{aligned}
        \rho_f\frac{D\bu_f}{Dt} - \nabla\cdot\boldsymbol{\sigma}(\bu_f, p) &= \rho_f\mathbf{g}, \quad &\text{in } \Omega_f^t,\\
        \nabla\cdot\bu_f &= 0, \quad &\text{in } \Omega_f^t,
    \end{aligned}
\end{equation}
where $\bu_f$ is the fluid velocity, $p$ is the pressure, $\rho_f$ is the fluid density, and $\mathbf{g}$ is the gravitational acceleration. The operator $D(\cdot)/Dt$ denotes the material derivative, which in the current Eulerian description is given by $\frac{\partial (\cdot)}{\partial t} + (\bu_f \cdot \nabla)(\cdot)$. This form is chosen to facilitate the transition to the ALE framework in subsequent sections. The Cauchy stress tensor for a Newtonian fluid is given by
\begin{equation}\label{eq:stress}
    \boldsymbol{\sigma}(\bu_f, p) = 2\mu_f\varepsilon(\bu_f) - p\mathbf{I},
\end{equation}
where $\mu_f$ is the dynamic viscosity, $\varepsilon(\bu_f) = \frac{1}{2}(\nabla\bu_f + \nabla\bu_f^T)$ is the rate of strain tensor, and $\mathbf{I}$ is the identity tensor. Homogeneous Dirichlet boundary conditions ($\bu_f = \mathbf{0}$) are imposed on the outer boundary $\partial\Omega$.

The motion of the rigid body is governed by the Newton--Euler equations:
\begin{equation}\label{eq:solid}
    \begin{aligned}
        m_s \frac{d\mathbf{U}}{dt} &= \mathbf{F}_{fsi} + m_s \mathbf{g}, \\
        \mathbf{I}_s \frac{d\boldsymbol{\omega}}{dt} + \boldsymbol{\omega} \times (\mathbf{I}_s \boldsymbol{\omega}) &= \mathbf{T}, \\
        \frac{d\mathbf{x}_c}{dt} &= \mathbf{U},
    \end{aligned}
\end{equation}
where $m_s$, $\mathbf{U}(t)$, $\mathbf{F}_{fsi}$, $\mathbf{g}$, $\mathbf{I}_s$, $\boldsymbol{\omega}(t)$, $\mathbf{T}$, and $\mathbf{x}_c(t)$ denote the mass, translational velocity, total hydrodynamic force, gravitational acceleration, moment of inertia tensor, angular velocity, torque, and the position of the center of mass, respectively. 
For a homogeneous rigid body with density $\rho_s$, the mass and moment of inertia are given by
\begin{equation}
    m_s = \int_{\Omega_s^t} \rho_s \, d\mathbf{x}, \quad
    \mathbf{I}_s = \int_{\Omega_s^t} \rho_s \left( |\mathbf{r}|^2 \mathbf{I} - \mathbf{r} \otimes \mathbf{r} \right) d\mathbf{x},
\end{equation}
where $\mathbf{r} = \mathbf{x} - \mathbf{x}_c$ is the position vector relative to the center of mass. 
In this work, we assume that the solid has the same density as the surrounding fluid, i.e., $\rho_s = \rho_f$, corresponding to a neutrally buoyant particle.

The interaction between the fluid and the solid is a two-way coupling: the motion of the solid dictates the flow boundary condition, while the fluid stress, in return, determines the resultant force and torque acting on the solid. This mutual interaction is mathematically encapsulated in the following interface conditions.
The {kinematic condition} requires that the fluid velocity equals the rigid-body surface velocity:
\begin{equation}\label{eq:kinematic_coupling}
    \mathbf{u}_f = \mathbf{U} + \boldsymbol{\omega} \times \mathbf{r}, \quad \mathbf{x} \in \Gamma_I^t.
\end{equation}
The {dynamic condition} enforces the balance of forces and torques between the fluid and the solid:
\begin{equation}\label{eq:dynamic_coupling}
    \mathbf{F}_{fsi} = -\int_{\Gamma_I^t} \boldsymbol{\sigma}(\mathbf{u}_f, p)\, \mathbf{n}\, ds, \qquad
    \mathbf{T} = -\int_{\Gamma_I^t} \mathbf{r} \times \big(\boldsymbol{\sigma}(\mathbf{u}_f, p)\, \mathbf{n}\big)\, ds,
\end{equation}
where $\boldsymbol{\sigma}(\mathbf{u}_f,p)$ denotes the fluid stress tensor and $\mathbf{n}$ is the unit normal vector pointing from the fluid into the solid.

Since the solid undergoes rigid-body motion, the velocity of any material point $\mathbf{x} \in \Omega_s^t$ is given by
\begin{equation}\label{eq:solid_velocity_field}
    \mathbf{u}_s(\mathbf{x},t) = \mathbf{U}(t) + \boldsymbol{\omega}(t) \times (\mathbf{x} - \mathbf{x}_c(t)),
\end{equation}
and the center of mass evolves according to
\begin{equation}\label{eq:cm_evolution}
    \frac{d\mathbf{x}_c}{dt} = \mathbf{U}.
\end{equation}
Consequently, the fluid domain $\Omega_f^t = \Omega \setminus \overline{\Omega_s^t}$ and the fluid-solid interface $\Gamma_I^t = \partial \Omega_f^t \cap \partial \Omega_s^t$ evolve over time according to the rigid-body motion of the solid.

\begin{remark}[Non-dimensionalization]
To simplify the analysis and generalize the results, we non-dimensionalize the governing equations. Let $L$ and $U$ be the characteristic length and velocity of the system, respectively. The dimensionless variables (denoted by asterisks) are defined as:
\begin{equation}
    \begin{aligned}
        \bu_f^* = \frac{\bu_f}{U},\quad
        p^* = \frac{p}{\mu_f U/L},\quad
        \bx^* = \frac{\bx}{L},\quad
        t^* = \frac{tU}{L},\quad
        \boldsymbol{\sigma}^* = \frac{\boldsymbol{\sigma}}{\mu_f U/L},\\
        m_s^* = \frac{m_s}{\rho_f L^d},\quad
        \mathbf{g}^* = \frac{\mathbf{g}L}{U^2},\quad
        \mathbf{F}_{fsi}^* = \frac{\mathbf{F}_{fsi}}{\rho_f U^2 L^{d-1}},\quad
        \mathbf{T}^* = \frac{\mathbf{T}}{\rho_f U^2 L^d},\\
        \mathbf{U}^* = \frac{\mathbf{U}}{U},\quad
        \mathbf{I}_s^* = \frac{\mathbf{I}_s}{\rho_f L^{d+2}},\quad
        \boldsymbol{\omega}^* = \frac{\boldsymbol{\omega}L}{U},\quad
        \mathbf{r}^* = \frac{\mathbf{r}}{L}.
    \end{aligned}
\end{equation}
The resulting dimensionless FSI problem is:
\begin{subequations}
    \begin{align}
        \Reynolds\frac{D\bu_f^*}{Dt^*}  &= \nabla\cdot\boldsymbol{\sigma}^* + \mathbf{g}^*, \quad &\text{in } \Omega_f^t, \label{subeq:ns}\\
        \nabla\cdot\bu_f^* &= 0, \quad &\text{in } \Omega_f^t,\label{subeq:incompressible}\\
        \boldsymbol{\sigma}^* &= 2\varepsilon(\bu_f^*) - p^*\mathbf{I}, \quad &\text{in } \Omega_f^t,\label{subeq:stress}\\
        m_s^*\frac{d\mathbf{U}^*}{dt^*} &= \mathbf{F}_{fsi}^* + m_s^* \mathbf{g}^*, \label{subeq:trans}\\
        \mathbf{I}_s^*\frac{d\boldsymbol{\omega}^*}{dt^*} + \boldsymbol{\omega}^*\times\mathbf{I}_s^*\boldsymbol{\omega}^* &= \mathbf{T}^*, \label{subeq:rot}\\
        \bu_f^* &= \mathbf{U}^* + \boldsymbol{\omega}^*\times \mathbf{r}^*, \quad &\text{on } \Gamma_I^t,\label{subeq:noslip}\\
        \mathbf{F}_{fsi}^* &= -\int_{\Gamma_I^t} \boldsymbol{\sigma}^*\mathbf{n}\ ds,\label{subeq:force} \\
        \mathbf{T}^* &= -\int_{\Gamma_I^t} \mathbf{r}^*\times\boldsymbol{\sigma}^*\mathbf{n}\ ds,\label{subeq:torque}
    \end{align}
\end{subequations}
where the Reynolds number is $\Reynolds = \rho_f U L / \mu_f$. For the remainder of this paper, we work exclusively with this dimensionless system and omit the asterisks for notational simplicity. In typical DLD applications, the Reynolds number $\Reynolds \ll 1$ due to the small length scales and velocities involved. Although we solve the full Navier-Stokes equations, this low Reynolds number regime ensures that the Stokes operator captures the dominant physics, making it an ideal candidate for constructing robust preconditioners.
\end{remark}

\section{Variational Formulation}
\label{sec:variational_formulation}

This section presents the variational formulation that serves as the foundation for our numerical discretization. The primary objective is to derive a {monolithic} system that consistently incorporates the fluid-structure coupling. Our key contribution is a novel formulation that combines the geometric precision of a fitted mesh with the mathematical rigor of the Distributed Lagrange Multiplier (DLM) framework. We first introduce the necessary function spaces. For any domain $w \subset \mathbb{R}^d$, let $H^1(w)$ denote the Sobolev space of functions in $L^2(w)$ with first-order weak derivatives in $L^2(w)$, and let $H^1_0(w)$ be the subspace with zero trace on $\partial w$. Let $L^2_0(w)$ denote the space of $L^2(w)$ functions with zero mean.

The final formulation, presented immediately below, employs a Lagrange multiplier field defined over the solid region and discretized using a single, conforming mesh, thereby avoiding the interpolation errors associated with classical two-grid DLM methods.

\subsection{Monolithic DLM Formulation}

We begin by stating the final variational problem, which is the cornerstone of our computational approach. Let $\boldsymbol{\Lambda}$ be a Hilbert space for the Lagrange multiplier, chosen here as $\boldsymbol{\Lambda} = [H^1(\Omega_s^t)]^d$.

\begin{problem}[Monolithic DLM Formulation] \label{prob:final_dlm_form}
Find the global velocity $\bu \in [H^1_0(\Omega)]^d$, the fluid pressure $p \in L^2_0(\Omega_f^t)$, the rigid-body translational and rotational velocities $\bU, \boldsymbol{\omega} \in \mathbb{R}^d$, and the Lagrange multiplier $\boldsymbol{\lambda} \in \boldsymbol{\Lambda}$ such that
\begin{equation}\label{eq:final_monolithic_system}
    \begin{aligned}
        \Reynolds\, m^t\!\left( \frac{D\bu}{Dt}, \bv \right) + a^t(\bu,\bv) + b^t(p,\bv) + c^t(\boldsymbol{\lambda}, \bv - \mathbf{V} - \boldsymbol{\xi} \times \br) &= m^t(\mathbf{g},\bv)_, \\
        b^t(q, \bu) &= 0, \\
        c^t(\boldsymbol{\mu}, \bu - \bU - \boldsymbol{\omega} \times \br) &= 0,
    \end{aligned}
\end{equation}
for all test functions $\bv \in [H^1_0(\Omega)]^d$, $q \in L^2_0(\Omega_f^t)$, $\mathbf{V}, \boldsymbol{\xi} \in \mathbb{R}^d$, and $\boldsymbol{\mu} \in \boldsymbol{\Lambda}$. Here, $\br = \bx - \bx_c$, the bilinear forms are defined as:
\begin{equation}\label{eq:final_bilinear_forms}
    \begin{aligned}
        m^t(\bu,\bv) &= \int_{\Omega} \bu \cdot \bv  \,d\bx, \\
        a^t(\bu,\bv) &= 2 \int_{\Omega} \varepsilon(\bu) : \varepsilon(\bv)  \,d\bx, \\
        b^t(p,\bv) &= -\int_{\Omega_f^t} p \, \nabla \cdot \bv  \,d\bx, \\
        c^t(\boldsymbol{\lambda}, \boldsymbol{\mu}) &= \alpha \int_{\Omega_s^t} \boldsymbol{\lambda} \cdot \boldsymbol{\mu}  \,d\bx + 2 \int_{\Omega_s^t} \varepsilon(\boldsymbol{\lambda}) : \varepsilon(\boldsymbol{\mu})  \,d\bx,
    \end{aligned}
\end{equation}
and the evolution of the domain $\Omega_f^t$ and $\Omega_s^t$ is governed by the rigid-body motion:
\begin{equation}
    \frac{d\bx_c}{dt} = \bU, \quad \frac{d\bx}{dt} = \bU + \boldsymbol{\omega} \times (\bx - \bx_c), \quad \bx \in \Omega_s^t, \quad \Omega_f^t = \Omega \setminus \overline{\Omega_s^t}.
\end{equation}

The parameter $\alpha > 0$ is related to the temporal discretization and influences the stability of the formulation; a detailed analysis of its selection will be presented in Section 5.
\end{problem}

This formulation possesses several advantageous features crucial for our computational framework. First, it is posed on the entire, fixed domain $\Omega$, which facilitates the use of a single, conforming mesh and circumvents the complexities associated with tracking the evolving fluid domain $\Omega_f^t$. Second, the kinematic constraint $\bu = \bU + \boldsymbol{\omega} \times \br$ in $\Omega_s^t$ is enforced not through the function space, but weakly by the Lagrange multiplier $\boldsymbol{\lambda}$ via the term $c^t(\boldsymbol{\mu}, \bu - \bU - \boldsymbol{\omega} \times \br)$. This approach liberates the discretization from the rigid-body constraint, allowing for the use of standard finite element spaces. Lastly, and most importantly, the specific structure of the bilinear form $c^t(\cdot, \cdot)$ defined in \eqref{eq:final_bilinear_forms} is carefully designed to ensure numerical stability. Its coercivity in a norm strong enough to control the rigid body modes is instrumental for proving the well-posedness of the discrete problem and for facilitating the construction of robust block preconditioners for the resulting saddle-point systems, which is essential for computational efficiency in large-scale simulations.

\begin{remark}\label{rem:density}
    In the case where the solid density $\rho_s$ differs from the fluid density $\rho_f$, the formulation can be readily adapted by modifying the mass bilinear form to account for the density contrast. Specifically, we redefine $m^t(\cdot, \cdot)$ as:
    \begin{equation}
        m^t(\bu,\bv) = \int_{\Omega_f^t} \bu \cdot \bv  \,d\bx + \frac{\rho_s}{\rho_f} \int_{\Omega_s^t} \bu \cdot \bv  \,d\bx.
    \end{equation}
\end{remark}

\subsection{Derivation of the Monolithic DLM Formulation}

The derivation of Problem~\ref{prob:final_dlm_form} proceeds from a classical formulation with strong constraints to the final weak formulation. We first formulate the problem in the Eulerian frame on the time-dependent fluid domain $\Omega_f^t$. To enforce the kinematic coupling, we define the constrained space:
\begin{equation}\label{eq:restricted_space}
H_{0,R}^1(\Omega_f^t) = \left\{ \bv_f \in [H^1(\Omega_f^t)]^d \,\middle|\,
\bv_f = \mathbf{V} + \boldsymbol{\xi} \times \mathbf{r} \text{ on } \Gamma_I^t,\ \bv_f = 0 \text{ on } \partial\Omega,\ \text{for some } \mathbf{V}, \boldsymbol{\xi} \in \mathbb{R}^d
\right\}.
\end{equation}

Assuming $\bu_f \in H_{0,R}^1(\Omega_f^t)$ and noting that the kinematic coupling condition $\bu_f = \mathbf{U} + \boldsymbol{\omega} \times \mathbf{r}$ holds on $\Gamma_I^t$, the standard procedure of multiplying the momentum equation~\eqref{subeq:ns} by a test function $\bv_f \in H_{0,R}^1(\Omega_f^t)$, integrating by parts, and substituting the dynamic coupling conditions~\eqref{subeq:noslip} and Newton-Euler equations~\eqref{subeq:force}~\eqref{subeq:torque} leads to the following formulation: find $(\bu_f, p, \mathbf{U}, \boldsymbol{\omega})$ with $\bu_f \in H_{0,R}^1(\Omega_f^t)$, $p \in L^2_0(\Omega_f^t)$ such that for all $\bv_f \in H_{0,R}^1(\Omega_f^t)$ and $q \in L^2_0(\Omega_f^t)$:
\begin{equation}\label{eq:weak_system_fluid_domain}
\begin{aligned}
\Reynolds \int_{\Omega_f^t} \frac{D\bu_f}{Dt} \cdot \bv_f d\bx
+ 2\mu_f \int_{\Omega_f^t} \varepsilon(\bu_f) : \varepsilon(\bv_f) d\bx
- \int_{\Omega_f^t} p \nabla \cdot \bv_f d\bx \\
+ m_s \left( \frac{d\mathbf{U}}{dt} - \mathbf{g} \right) \cdot \mathbf{V}
+ \left( \mathbf{I}_s \frac{d\boldsymbol{\omega}}{dt} + \boldsymbol{\omega} \times (\mathbf{I}_s \boldsymbol{\omega}) \right) \cdot \boldsymbol{\xi}
&= \int_{\Omega_f^t} \mathbf{g} \cdot \bv_f d\bx, \\
\int_{\Omega_f^t} q \nabla \cdot \bu_f d\bx &= 0,
\end{aligned}
\end{equation}
where $\mathbf{V}$ and $\boldsymbol{\xi}$ are determined by $\bv_f|_{\Gamma_I^t} = \mathbf{V} + \boldsymbol{\xi} \times \mathbf{r}$.

While conceptually straightforward, this formulation is computationally challenging due to the evolving domain $\Omega_f^t$ and the cumbersome constrained function space.

A more unified perspective is obtained by extending the fluid velocity to the entire domain $\Omega$. Define the global velocity field $\bu : \Omega \to \mathbb{R}^d$ where $\bu$ equals the fluid velocity $\bu_f$ in $\Omega_f^t$ and equals the rigid-body velocity $\mathbf{U} + \boldsymbol{\omega} \times \br$ in $\Omega_s^t$. The kinematic coupling ensures $\bu$ is continuous across $\Gamma_I^t$. We now introduce the unified constrained space:
\begin{equation}\label{eq:unified_test_space}
H_{0,R}^1(\Omega) = \left\{ \bv \in [H_0^1(\Omega)]^d \,\middle|\, 
\bv|_{\Omega_s^t} = \mathbf{V} + \boldsymbol{\xi} \times \br,\ 
\mathbf{V}, \boldsymbol{\xi} \in \mathbb{R}^d \right\}.
\end{equation}

A key insight is that for any $\bu$ and $\bv$ in this space, the following identity holds within the solid domain $\Omega_s^t$ due to the properties of rigid-body motions:
\begin{equation}\label{eq:key_identity}
\Reynolds \int_{\Omega_s^t} \frac{D\bu}{Dt} \cdot \bv d\bx + 2 \int_{\Omega_s^t} \varepsilon(\bu) : \varepsilon(\bv) d\bx = m_s \frac{d\bU}{dt} \cdot \mathbf{V} + \left( \mathbf{I}s \frac{d\boldsymbol{\omega}}{dt} + \boldsymbol{\omega} \times \mathbf{I}_s \boldsymbol{\omega} \right) \cdot \boldsymbol{\xi}.
\end{equation}
This identity allows us to absorb the solid inertia terms in \eqref{eq:weak_system_fluid_domain} into domain integrals over $\Omega$, yielding an equivalent but more compact formulation: find $\bu \in H_{0,R}^1(\Omega)$ and $p \in L^2_0(\Omega_f^t)$ such that
\begin{equation}\label{eq:weak_system_unified_domain}
\begin{aligned}
\Reynolds\int_{\Omega} \frac{D\bu}{Dt}\cdot\bv\ d\bx + 2\int_{\Omega} \varepsilon(\bu):\varepsilon(\bv)\ d\bx - \int_{\Omega_f^t} p\ \nabla\cdot\bv\ d\bx &= \int_{\Omega} \mathbf{g}\cdot\bv\ d\bx,\\
\int_{\Omega_f^t} q\ \nabla\cdot\bu\ d\bx &= 0,
\end{aligned}
\end{equation}
for all $\bv \in H_{0,R}^1(\Omega)$ and $q \in L^2_0(\Omega_f^t)$.

This form elegantly expresses the coupled FSI problem within a single variational framework. However, it still relies on a function space that is constrained to rigid motions in $\Omega_s^t$, which complicates the use of standard finite element discretizations on a fixed mesh.

To circumvent this, we employ the Distributed Lagrange Multiplier method. The core idea is to relax the constraint $\bu = \bU + \boldsymbol{\omega} \times \br$ in $\Omega_s^t$ and enforce it weakly by introducing a Lagrange multiplier field $\boldsymbol{\lambda} \in \boldsymbol{\Lambda}$.
We define the saddle-point problem on the unconstrained space $[H^1_0(\Omega)]^d \times \boldsymbol{\Lambda}$, which leads directly to the final formulation stated in Problem~\ref{prob:final_dlm_form}. 
The specific construction of the bilinear form $c(\boldsymbol{\lambda}, \boldsymbol{\mu})$ in \eqref{eq:final_bilinear_forms} is carefully designed to ensure numerical stability. In general, for the Distributed Lagrange Multiplier method to be well-posed, the Lagrange multiplier space $\boldsymbol{\Lambda}$ and the bilinear form $c^t : \boldsymbol{\Lambda} \times [H^1(\Omega_s^t)]^d \to \mathbb{R}$ must satisfy certain conditions:
\begin{equation*}
\begin{aligned}
&c^t \text{ is bounded on } \boldsymbol{\Lambda} \times [H^1(\Omega_s^t)]^d, \
&c^t(\boldsymbol{\mu}, \bv) = 0 \quad \forall, \boldsymbol{\mu} \in \boldsymbol{\Lambda} \quad \text{implies} \quad \bv = 0 \quad \text{in } \Omega_s^t.
\end{aligned}
\end{equation*}
This ensures that the constraint imposed by the Lagrange multiplier is complete: the only function in $[H^1(\Omega_s^t)]^d$ that is $c^t$-orthogonal to all $\boldsymbol{\mu} \in \boldsymbol{\Lambda}$ is the zero function.

Common choices include taking $\boldsymbol{\Lambda} = ([H^1(\Omega_s^t)]^d)'$ with $c^t$ as the duality pairing, or $\boldsymbol{\Lambda} = [H^1(\Omega_s^t)]^d$ with $c^t$ as the $H^1$ inner product. In our formulation, we adopt the latter approach with $\boldsymbol{\Lambda} = [H^1(\Omega_s^t)]^d$ and define $c^t$ as a weighted combination:
\begin{equation*}
c^t(\boldsymbol{\lambda}, \boldsymbol{\mu}) = \alpha \int_{\Omega_s^t} \boldsymbol{\lambda} \cdot \boldsymbol{\mu} d\bx + 2 \int_{\Omega_s^t} \varepsilon(\boldsymbol{\lambda}) : \varepsilon(\boldsymbol{\mu}) d\bx.
\end{equation*}
The parameter $\alpha > 0$ is related to the temporal discretization and influences the stability of the formulation; a detailed analysis of its selection will be presented in Section 5. 

This derivation demonstrates that our proposed monolithic DLM formulation (Problem~\ref{prob:final_dlm_form}) is mathematically equivalent to the classical strong-constraint formulations, yet it is far more friendly to numerical implementation using our fitted-mesh strategy.

\section{Discretisation schemes}
\subsection{Formulation in reference configuration}

Before we start the discussion of the discretisation, we discuss the formulation in an arbitrary Lagrangian-Eulerian coordinate system. We follow the idea in \cite{gawlik2014high} and \cite{gawlik2015unified} to transform the problem into a fixed domain. For any $t\in[0,T]$, let $\mathcal{A}^t:\Omega \to \Omega$ be a diffeomorphism satisfying  $\Gamma_I^t = \mathcal{A}^t(\Gamma_I^0)$. The fluid domain is given by $\Omega_f^t = \mathcal{A}^t(\Omega_f^0)$ and the solid domain is given by $\Omega_s^t = \mathcal{A}^t(\Omega_s^0)$.  
We introduce the following function spaces in the reference configuration:
\begin{equation}
    \tilde{V} = [H^1_0(\Omega)]^d,\quad \tilde{Q} = L^2_0(\Omega_f^0),\quad \tilde{\boldsymbol{\Lambda}} =[H^1(\Omega_s^0)]^d.
\end{equation}
To avoid making the problem too complicated at the moment, we first assume that the mapping $\mathcal{A}^t$ is known. 
We define the following variables in the reference configuration:
\begin{equation}\label{eq:equivalence_coordinate}
    \begin{aligned}
    \tu(\tilde{\bx},t) &= \bu(\mathcal{A}^t(\tilde{\bx}),t),\quad\\
    \tp(\tilde{\bx},t) &= p(\mathcal{A}^t(\tilde{\bx}),t),\quad\\
    \tilde{\boldsymbol{\lambda}}(\tilde{\bx},t) &= \boldsymbol{\lambda}(\mathcal{A}^t(\tilde{\bx}),t).
\end{aligned}
\end{equation}
Although $\bU$ and $\boldsymbol{\omega}$ are independent of the spatial variable, we still use $\tU$ and $\tomega$ to denote them in the reference configuration for notational consistency. 
The bilinear forms in the reference configuration are given by
\begin{equation*}
    \begin{aligned}
        & {m}^t(\tu, \tilde{\bv}) = \int_{\Omega} \tu\cdot\tilde{\bv}|\det(F)| \ d\tilde{\bx},\quad
        \frac{\tilde{D}\tu}{\tilde{D}t} = \frac{\partial\tu}{\partial t} + (\tu - \frac{\partial \mathcal{A}^t}{\partial t})\cdot \nabla_{\bx}\tu,\\
        &{a}^t(\tu,\tilde{\bv}) = \int_{\Omega} \varepsilon(\tu):\varepsilon(\tilde{\bv})|\det(F)| \ d\tilde{\bx},
        \quad\varepsilon(\tu) = \frac{1}{2}(\nabla_{\bx}\tu + (\nabla_{\bx}\tu)^T),\\
        &{b}^t(\tp, \tilde{\bv}) = \int_{\Omega_f^t} \tp\ \tr(\nabla_{\bx}\tilde{\bv})|\det(F)| \ d\tilde{\bx},\quad \nabla_{\bx}\tilde{\bv} = \nabla_{\tilde{\bx}} \tilde{\bv}\ F^{-1},\\
        &F = \nabla_{\tilde{\bx}} \mathcal{A}^t.
    \end{aligned}
\end{equation*}

Through this change of variables, the ``evolution of the domain'' is now mathematically encapsulated in the nonlinear terms of the governing equations involving $\mathcal{A}^t$ and its temporal derivative (the mesh velocity $\partial_t \mathcal{A}^t$). This transformation is pivotal because it represents the evolving geometric domain through an explicit mapping variable. This formulation clarifies the separation between mesh movement and physical evolution, providing a transparent framework for high-order temporal discretization.
For notational simplicity and algorithmic clarity, we recast the coupled system \eqref{eq:final_monolithic_system} into the compact abstract form \eqref{star} by introducing the composite solution vector $\Phi = (\tu, \tp, \tilde{\bU}, \tilde{\boldsymbol{\omega}}, \tilde{\boldsymbol{\lambda}})$, test function vector $\Psi = (\tilde{\bv},\tilde{q},\tilde{\mathbf{V}} , \tilde{\boldsymbol{\xi}}, \tilde{\boldsymbol{\mu}})$, and the nonlinear operator $\mathcal{F}$ encapsulating the weak form of the governing equations:
\begin{equation}\label{star}
    \mathcal{F}(\frac{\tilde{D}\tu}{\tilde{D}t},\Phi, \Psi,\mathcal{A}^t)=0,
\end{equation}
where 
\begin{equation}\label{eq:Full_form_ALE}
    \begin{aligned}
        \mathcal{F}(\frac{\tilde{D}\tu}{\tilde{D}t},\Phi, \Psi,\mathcal{A}^t) =
        &\Reynolds ~{m}^t(\frac{\tilde{D}\tu}{\tilde{D}t}, \tilde{\bv}) + {a}^t(\tu,\tilde{\bv}) + {b}^t(\tp,\tilde{\bv}) + c^t(\tilde{\boldsymbol{\lambda}}, \tilde{\bv} - \tU - \tomega \times \br) - {m}^t(\mathbf{g},\tilde{\bv})\\
        &+ {b}^t(\tilde{q}, \tu) + c^t(\tilde{\boldsymbol{\mu}}, \tu - \tU - \tomega \times \br).
    \end{aligned}
\end{equation}

\subsection{Spatial discretisation}
We use the finite element method to discretise the problem. We denote $\mathcal{T}_h$ as a triangulation of the domain $\Omega$ with mesh size $h$. We denote $\tilde{V}_h$ as the finite element space of degree $2$ for the velocity field, and $\tilde{Q}_h$ as the finite element space of degree $1$ for the pressure field. To enforce the coupling conditions, we choose $\tilde{\boldsymbol{\Lambda}}_h$ to be the finite element space of the same degree as the velocity field. We use the isoparametric finite element to approximate the domain $\Omega$ with curved boundaries and interface $\Gamma_I^t$.
The finite element spaces are defined as
\begin{equation}
    \begin{aligned}
        \tilde{V}_h &= \{ \bv_h\in \tilde{V} ~| ~\bv_h|_K\circ F_K\in [P_2(\hat{K})]^d,\ K\in\mathcal{T}_h\},\\
        \tilde{Q}_h &= \{ q_h\in \tilde{Q} ~| ~q_h|_K\circ F_K\in P_1(\hat{K}),\ K\in\mathcal{T}_h\},\\
        \tilde{\boldsymbol{\Lambda}}_h &= \{ \boldsymbol{\lambda}_h\in \tilde{\boldsymbol{\Lambda}} ~| ~\boldsymbol{\lambda}_h|_K\circ F_K\in [P_2(K)]^d,\ K\in\mathcal{T}_h\},
    \end{aligned}
\end{equation}
where $P_2(\hat{K})$ and $P_1(\hat{K})$ are the polynomial spaces of degree $2$ and $1$ on the reference element $\hat{K}$, respectively. The mapping $F_K:\hat{K}\to K$ is the isoparametric mapping from the reference element to the physical element, i.e., \begin{equation}F_K(\hat{\bx}) = \sum_{i=1}^{N_K} \hat{\phi}_i(\hat{\bx})\mathbf{x}_i,\end{equation} where $\hat{\phi}_i$ are the shape functions (or nodal basis functions) on the reference element and $\mathbf{x}_i$ are the nodal points of the physical element. In this way, the triangluation $\mathcal{T}_h$ is defined by the nodal points $\mathbf{x}_i$ and the shape functions $\hat{\phi}_i$, and the physical element $K$ can approximate the domain $\Omega$ with curved boundaries and interface $\Gamma_I^t$ with the accuracy of $O(h^3)$. The isoparametric element is crucial for the accuracy of the discretisation for the FSI problem, as the interface $\Gamma_I^t$ is not a piecewise linear curve in general. The isoparametric mapping allows us to use the same shape functions for both the physical element and the reference element, which ensures that the interface is approximated with the same accuracy as the fluid velocity \cite{brenner2008mathematical}.

The finite element approximation of the problem is given by
\begin{problem}
Given $\mathcal{A}^t$ and $\partial_t \mathcal{A}^t$, find the generalized solution 
\[
{\Phi}_h =(\tu_h, \tp_h, \tilde{\bU}_h, \tilde{\boldsymbol{\omega}}_h, \tilde{\boldsymbol{\lambda}}_h) \in \tilde{V}_h \times \tilde{Q}_h \times \mathbb{R}^d \times \mathbb{R}^d \times \tilde{\boldsymbol{\Lambda}}_h
\]
such that
\begin{equation}\label{eq:general_ALE}
\mathcal{F}\left( 
\frac{\partial \tu_h}{\partial t} + (\tu_h - \partial_t \mathcal{A}^t) \cdot \nabla_{\bx} \tu_h,\ 
{\Phi}_h,\ 
{\Psi}_h,\ 
\mathcal{A}^t 
\right) = 0.
\end{equation}
for all $\boldsymbol{\Psi}_h \in \tilde{V}_h \times \tilde{Q}_h \times \mathbb{R}^d \times \mathbb{R}^d \times \tilde{\boldsymbol{\Lambda}}_h$, where $\mathcal{F}$ is defined in \eqref{eq:Full_form_ALE}.
\end{problem}
\begin{remark}
  This semi-discretized problem is a dynamical system in time defined on fixed finite element spaces. By parameterizing the domain motion as a time-dependent variable, this approach not only eradicates the difficulty of dealing with time-dependent fluid domains but also clarifies the partitioning logic in temporal schemes, such as the IMEX strategies discussed in Section 4.4.
\end{remark}

\subsection{Construction of the domain mapping}
For a realistic FSI problem, the domain mapping $\mathcal{A}^t$ is unknown and coupled with the velocity field. To maintain geometric consistency as the solid moves, the domain mapping must respect the rigid-body motion of the embedded object. This imposes a boundary condition on the mapping's evolution:
\begin{equation}
    \left\{
    \begin{aligned}
    \frac{d\mathcal{A}^t(\tilde{\mathbf{x}})}{dt} &= \tU + \tomega\times\mathbf{r}, &\text{ on } \partial\Omega_s^0,\\
    \mathcal{A}^0(\tilde{\mathbf{x}}) &= \tilde{\mathbf{x}},&\text{in }\Omega.
    \end{aligned}
    \right.
\end{equation}
However, this condition only prescribes the motion of the boundary. To define a unique mapping throughout the entire domain $\Omega$, this boundary velocity must be extended to the interior. This extension is not unique, and various strategies exist, such as solving a Laplace (or pseudo-elasticity) equation for the mesh nodal displacements or using radial basis functions. The choice of extension algorithm represents a modeling decision that affects mesh quality and numerical stability, but does not alter the underlying physics of the fluid-structure interaction.

A wide range of techniques exist for extending the interface motion to the entire fluid domain \cite{jasak2006automatic,rangarajan2019algorithm,ramsharan2011analysis,barral2014two,sackinger1996newton,le2001fluid,cerroni2014new}. In this work, we employ a {harmonic extension} method, which solves an elliptic PDE on the reference domain to smoothly propagate boundary displacements into the interior while minimizing mesh distortion.

Let $\mathcal{A}^t(\tilde{\mathbf{x}}) = \tilde{\mathbf{x}} + \mathbf{X}_h^t(\tilde{\mathbf{x}})$, where $\mathbf{X}_h^t$ denotes the mesh displacement field at time $t$. Given a target displacement increment on the solid interface (or rigid body), we define a general mesh update problem as follows.

\begin{problem}\label{prob:mesh_update}
Given $(\mathbf{X}_h^{\rm prev}, \Delta t, \alpha, \tU, \tomega, \mathbf{x}_c)$ representing the previous mesh displacement, time step size, time stepping parameter, rigid-body translational and rotational velocities, and center of mass position, respectively, find the updated displacement $\mathbf{X}_h \in Y$ such that
\begin{equation}\label{eq:mesh_general}
    \begin{aligned}
    (\nabla \mathbf{X}_h, \nabla \mathbf{Y}) &= 0 
        &\quad &\text{in } \Omega_f^0, \\
    \mathbf{X}_h &= \mathcal{G}(\mathbf{X}_h^{\rm prev}; \tU, \tomega, \alpha\Delta t)
        &\quad &\text{on } \Omega_s^0, \\
    \mathbf{X}_h &= 0 
        &\quad &\text{on } \partial\Omega,
    \end{aligned}
\end{equation}
for all test functions $\mathbf{Y} \in Y_0$, where the rigid-body transformation $\mathcal{G}$ is defined as:
\[
\mathcal{G}(\mathbf{y}; \tU, \tomega, \alpha\Delta t) = \mathbf{y} + \alpha\Delta t\tU + (\mathbf{Q}(\alpha\Delta t\tomega)-\mathbf{I}) \mathbf{r},
\]
with $\mathbf{Q}(\alpha\Delta t\tomega)$ being the rotation matrix corresponding to the angular displacement $\alpha\Delta t\tomega$ defined below:
\begin{equation}\label{eq:rotation_matrix}
    \mathbf{Q}(\boldsymbol{\theta}) = \begin{cases}
    \mathbf{I} + \frac{\sin(|\boldsymbol{\theta}|)}{|\boldsymbol{\theta}|}\hat{\boldsymbol{\theta}} + \frac{(1 - \cos(|\boldsymbol{\theta}|))}{|\boldsymbol{\theta}|^2}\hat{\boldsymbol{\theta}}^2, & d=3,\\
    \begin{pmatrix}
    \cos(\theta) & -\sin(\theta) \\
    \sin(\theta) & \cos(\theta)
    \end{pmatrix}, & d=2,
    \end{cases}
\end{equation}
where 
\begin{itemize}
\item for $\boldsymbol{\theta} = (\theta_1, \theta_2, \theta_3)^T \in \mathbb{R}^3$ in three-dimensional case, $\hat{\boldsymbol{\theta}}$ is the skew-symmetric matrix defined as
\[
\hat{\boldsymbol{\theta}} = \begin{pmatrix}
0 & -\theta_3 & \theta_2 \\
\theta_3 & 0 & -\theta_1 \\
-\theta_2 & \theta_1 & 0
\end{pmatrix}.
\]
\item for two-dimensional case, $\boldsymbol{\theta} = \theta \mathbf{e}_3$ with $\mathbf{e}_3$ being the unit vector perpendicular to the plane of motion.
\end{itemize}
The finite element spaces are defined as:
\begin{equation}
    \begin{aligned}
    Y &= \left\{ \mathbf{X}_h \in [H^1_0(\Omega)]^d \;\middle|\; \mathbf{X}_h|_K \in [P_1(K)]^d\ \forall K \in \mathcal{T}_h \right\}, \\
    Y_0 &= \left\{ \mathbf{Y}_h \in [H^1_0(\Omega)]^d \;\middle|\; \mathbf{Y}_h|_K \in [P_1(K)]^d\ \forall K \in \mathcal{T}_h,\ \mathbf{Y}_h|_{\Omega_s^0} = 0 \right\}.
    \end{aligned}
\end{equation}
\end{problem}
\begin{remark}
Here we use linear finite elements for computational efficiency and conservation of the triangulation away from the boundary. However, the curved edges of the isoparametric elements on the interface $\Gamma_I^t$ needs to be updated according to the new position of the solid to maintain geometric accuracy.
\end{remark}

\subsection{Temporal discretisation}
The fully coupled FSI-ALE system, after spatial discretization, can be expressed as a system of ordinary differential equations (ODEs) in time. To facilitate the application of partitioned Runge-Kutta methods, we rewrite the semi-discrete system in a more abstract form, and separate the non-stiff mesh update from the potentially stiff FSI dynamics. For this purpose, we introduce the following notation.
Let $\mathbf{Y}(t)$ denote the vector of all physical variables at time $t$, including fluid velocity, pressure, rigid-body velocities, and Lagrange multipliers, and let $\mathbf{X}(t)$ represent the mesh configuration at time $t$. The semi-discrete FSI-ALE system can be expressed as:
\begin{equation}
    \left\{
    \begin{aligned}
        \frac{d\mathbf{X}}{dt} &= \mathbf{F}(\mathbf{X}, \mathbf{Y}), \\
        \frac{d\mathbf{Y}}{dt} &= \mathbf{G}(\mathbf{X}, \mathbf{Y}),
    \end{aligned}
    \right.
\end{equation}
where $\mathbf{G}$ encapsulates the fluid-structure interaction dynamics, and $\mathbf{F}$ describes the mesh motion driven by the rigid-body dynamics. Then we can apply a partitioned Runge-Kutta method to this system, treating the mesh update and the FSI solve as separate stages. These methods, which belong to the broader class of additive Runge-Kutta methods, employ different Butcher tableaux for different components of the right-hand side function. For a detailed discussion on partitioned and additive Runge-Kutta methods, we refer to \cite{kennedy2003additive,ascher1997implicit}.
The general form of a partitioned Runge-Kutta method for this system can be written as
\begin{equation}
    \left\{
    \begin{aligned}
        \mathbf{X}_{n+1} &= \mathbf{X}_n + \Delta t \sum_{i=1}^{s+1} \tilde{b}_i \mathbf{F}(\mathbf{X}_n^{(i-1)}, \mathbf{Y}_n^{(i-1)}), \\
        \mathbf{Y}_{n+1} &= \mathbf{Y}_n + \Delta t \sum_{i=1}^s {b}_i \mathbf{G}(\mathbf{X}_n^{(i)}, \mathbf{Y}_n^{(i)}),
    \end{aligned}
    \right.
\end{equation}
with $\mathbf{X}_n^{(i)}, \mathbf{Y}_n^{(i)}$ being the stage values defined by
\begin{equation}
    \left\{
    \begin{aligned}
        \mathbf{X}_n^{(0)} &= \mathbf{X}_n, \quad \mathbf{Y}_n^{(0)} = \mathbf{Y}_n,\\
        \mathbf{X}_n^{(i)} &= \mathbf{X}_n + \Delta t \sum_{k=1}^{i+1} \tilde{a}_{i+1,k} \mathbf{F}(\mathbf{X}_n^{(k-1)}, \mathbf{Y}_n^{(k-1)}), \\
        \mathbf{Y}_n^{(i)} &= \mathbf{Y}_n + \Delta t \sum_{k=1}^i a_{i,k} \mathbf{G}(\mathbf{X}_n^{(k)}, \mathbf{Y}_n^{(k)}).
    \end{aligned}
    \right.
\end{equation}
The coefficients $\tilde{a}_{i,j}, \tilde{b}_i, a_{i,j}, b_i$ define the specific Runge-Kutta method used for each subsystem, where $\tilde{a}_{i,j}, \tilde{b}_i$ correspond to the explicit part (mesh motion) and $a_{i,j}, b_i$ correspond to the implicit part (FSI solve). The corresponding Butcher tableaux are given by:
\begin{center}
\begin{minipage}{0.45\textwidth}
\centering
\textbf{Implicit part}
\begin{tabular}{c|ccccc}
    0 & 0 & 0 & 0 & $\cdots$ & $0$\\
  $c_1$ & 0 & $a_{1,1}$ & $0$ & $\cdots$ & $0$\\
  $c_2$ & 0 & $a_{2,1}$ & $a_{2,2}$ & $\cdots$ & $0$\\
  $\vdots$ & $\vdots$ & $\vdots$ & $\vdots$ & $\ddots$ & $\vdots$\\
  $c_s$ & 0 & $a_{s,1}$ & $a_{s,2}$ & $\cdots$ & $a_{s,s}$\\
  \hline
    & 0 & $b_1$ & $b_2$ & $\cdots$ & $b_s$ \\
\end{tabular}
\end{minipage}
\hfill
\begin{minipage}{0.45\textwidth}
\centering
\textbf{Explicit part}
\begin{tabular}{c|ccccc}
  $0$ & $0$ & $0$ & $\cdots$ & $0$\\
  $c_1$ & $\tilde{a}_{2,1}$ & $0$ & $\cdots$ & $0$\\
  $\vdots$ & $\vdots$ & $\vdots$ & $\ddots$ & $\vdots$\\
  $c_s$ & $\tilde{a}_{s+1,1}$ & $\tilde{a}_{s+1,2}$ & $\cdots$ & $0$\\
  \hline
    & $\tilde{b}_1$ & $\tilde{b}_2$ & $\cdots$ & $\tilde{b}_{s+1}$ \\
\end{tabular}
\end{minipage}
\end{center}

A critical aspect of our approach is the recognition that the mesh motion, governed by $\mathbf{F}(\mathbf{X}, \mathbf{Y})$, is not a source of numerical stiffness in our system. This is because the mesh velocity is typically of the same order of magnitude as the rigid-body velocity, and the mesh update equation itself does not contain the high-order spatial derivatives or the algebraic constraints (incompressibility, rigid-body motion) that are the primary contributors to stiffness in the FSI problem. Consequently, the characteristic time scales associated with mesh deformation are much less restrictive than those imposed by the fluid-structure interaction dynamics encapsulated in $\mathbf{G}(\mathbf{X}, \mathbf{Y})$. This physical insight allows us to select an IMEX partitioning of the Runge-Kutta method. Specifically, we treat the mesh equation for $\mathbf{X}$ as the non-stiff component and integrate it explicitly, while treating the FSI equation for $\mathbf{Y}$ as the stiff component and integrating it implicitly. This translates to choosing an explicit scheme for $\mathbf{F}$ (i.e., $a_{i,j} = 0$ for $j \geq i$) and a diagonally implicit scheme (DIRK) for $\mathbf{G}$ (i.e., $\tilde{a}_{i,j} = 0$ for $j > i$). This strategy effectively decouples the mesh evolution from the FSI solve within each time step, leading to a significant reduction in computational cost compared to a fully implicit approach, without compromising stability for the stiff components of the system. Following this rationale, we adopt the IMEX Runge-Kutta framework. 
\begin{remark}
For our simulations, we employ two specific schemes from \cite{ascher1997implicit}, represented by the following Butcher tableaux for the explicit (mesh motion) and implicit (FSI) parts, respectively:
\paragraph{Forward-Backward Euler:}
\begin{center}
\begin{minipage}{0.45\textwidth}
\centering
\textbf{Implicit part:}
\begin{tabular}{c|cc}
    0 & 0 & 0\\
  1 & 0 & 1\\
  \hline
    & 0 & 1 \\
\end{tabular}
\end{minipage}
\hfill
\begin{minipage}{0.45\textwidth}
\centering
\textbf{Explicit part:}
\begin{tabular}{c|cc}
  0 & 0 & 0\\
  1 & 1 & 0\\
  \hline
    & 1 & 0 \\
\end{tabular}
\end{minipage}
\end{center}
\paragraph{Two-stage second-order scheme:}
\begin{center}
    \begin{minipage}{0.45\textwidth}
\centering
\textbf{Implicit part:}
\begin{tabular}{c|ccc}
    0 & 0 & 0 & 0\\
  $\gamma$ & 0 & $\gamma$ & 0\\
  1 & 0 & $1-\gamma$ & $\gamma$\\
  \hline
    & 0 & $1-\gamma$ & $\gamma$ \\
\end{tabular}
\end{minipage}
\hfill
\begin{minipage}{0.45\textwidth}
\centering
\textbf{Explicit part:}
\begin{tabular}{c|ccc}
    0 & 0 & 0 & 0\\
  $\gamma$ & $\gamma$ & 0 & 0\\
  1 & $\delta$ & $1-\delta$ & 0\\
  \hline
    & $\delta$ & $1-\delta$ & 0\\
\end{tabular}
\end{minipage}
\end{center}
where $\gamma = 1 - \frac{1}{\sqrt{2}}$ and $\delta = 1 - \frac{1}{2\gamma}$.
\end{remark} 
\begin{remark}
    For more general case of Runge-Kutta, the matrix of $(a_{i,j})$ and $(\tilde{a}_{i,j})$ can be full matrices, leading to fully implicit schemes for both components and thus a better stability property. However, such schemes require solving a coupled nonlinear system involving all stages at each time step, which is computationally expensive. In contrast, the IMEX schemes we employ strike a balance between stability and efficiency by treating the stiff FSI dynamics implicitly while handling the non-stiff mesh motion explicitly.
\end{remark}

These schemes provide a balance between computational efficiency and temporal accuracy, making them well-suited for the coupled FSI-ALE problem at hand. While the traditional mathematical formulation of Runge-Kutta methods provides a clear theoretical framework through explicit slope terms and their combination via coefficient matrices $a_{ij}$, we adopt a reformulated expression for practical algorithm implementation. In our implementation, we substitute the slope terms $\mathbf{F}$ and $\mathbf{G}$ with some linear combinations of the solution variables at different stages. This approach simplifies the coding process and enhances computational efficiency, as it avoids the need to explicitly compute and store intermediate slope values. Instead, we directly manipulate the solution vectors, allowing for a more streamlined and memory-efficient implementation of the IMEX Runge-Kutta schemes.
Before we delve into the complete discretisation scheme, we give the fully discrete problem for the FSI system with a given domain mapping.
\begin{problem}\label{problem:general_IMEX}
Given $(\tilde{\mathbf{w}}, \mathcal{V}, \gamma, \mathcal{A}^t, \Delta t)$ representing an estimated previous velocity, mesh velocity, time scale parameter, and domain mapping, find the generalized solution
\[
{\Phi}_h =  (\tu_h, \tp_h, \tilde{\bU}_h, \tilde{\boldsymbol{\omega}}_h, \tilde{\boldsymbol{\lambda}}_h)  \in \tilde{V}_h \times \tilde{Q}_h \times \mathbb{R}^d \times \mathbb{R}^d \times \tilde{\boldsymbol{\Lambda}}_h
\]
such that
\begin{equation}\label{eq:general_IMEX}
\mathcal{F}\left( 
\frac{\tu_h - \tilde{\mathbf{w}}}{\gamma \Delta t} + (\tu_h - \mathcal{V}) \cdot \nabla_{\bx} \tu_h,\ 
{\Phi}_h,\ 
{\Psi}_h,\ 
\mathcal{A}^t 
\right) = 0
\end{equation}
for all test functions ${\Psi}_h$ in the same space.
\end{problem}

To facilitate the description and analysis of the time-stepping scheme, we introduce two abstract solution operators that encapsulate the core computational steps:

\begin{itemize}
    \item \textbf{Mesh Update Step} $\mathcal{M}$: 
    Given $(\mathbf{X}_h^{\rm prev}, \tU, \tomega, \alpha, \Delta\tau)$ representing the previous mesh displacement, rigid-body translational and rotational velocities, time scaling parameter, and time increment respectively, the operator $\mathcal{M}$ returns the updated mesh displacement $$\mathbf{X}_h = \mathcal{M}(\mathbf{X}_h^{\rm prev}, \tU, \tomega, \alpha, \Delta\tau)$$ by solving Problem~\ref{prob:mesh_update}. The corresponding domain mapping is then defined as: $\mathcal{A}^t(\mathbf{x}) = \mathbf{x} + \mathbf{X}_h(\mathbf{x})$.

    \item \textbf{FSI Solution Step} $\mathcal{S}$: Given $(\tilde{\mathbf{w}}, \mathcal{V}, \gamma, \mathcal{A}^t, \Delta t)$ representing an estimated previous velocity, mesh velocity, time scale parameter, domain mapping, and time increment respectively, the operator $\mathcal{S}$ returns the solution 
    \[
        {\Phi}_h = \mathcal{S}(\tilde{\mathbf{w}}, \mathcal{V}, \gamma, \mathcal{A}^t, \Delta t)
    \]
    by solving Problem~\ref{problem:general_IMEX}. This solution includes the fluid velocity, pressure, rigid-body velocities, and Lagrange multipliers.
\end{itemize}

These operators allow us to express the time advancement in a modular and stage-independent manner. Now we are ready to present the complete time-stepping schemes for the coupled FSI-ALE problem.

\paragraph{First-Order Semi-Implicit Scheme}
The first-order semi-implicit scheme proceeds as follows. At each time step $n$, given the solution $\boldsymbol{\Phi}_h^n = (\mathbf{u}_h^n, p_h^n, \mathbf{U}_h^n, \boldsymbol{\omega}_h^n, \boldsymbol{\lambda}_h^n)$ and the domain mapping $\mathcal{A}^{t_n}$, we compute the solution at $t_{n+1} = t_n + \Delta t$ in a single stage:

\begin{algorithm}
\renewcommand{\thealgocf}{PRK1}
\caption{First-Order Semi-Implicit Scheme}
\label{alg:first_order_FSI_ALE}
\SetAlgoLined
\DontPrintSemicolon
\KwIn{Initial values: $\mathbf{X}_h^0$, $\mathbf{u}_h^0$, $\mathbf{U}_h^0$, $\omega_h^0$, $\Delta t$}
\KwOut{Solutions at each time step: $\boldsymbol{\Phi}_h^n$, $\mathcal{A}^{t_n}$ for $n=0,1,2,\ldots$ until $t_n \geq T$}
\textbf{Initialization:}\;
$\alpha \leftarrow 1$\;
$\Delta\tau \leftarrow \Delta t$\;
\For{$n = 0, 1, 2, \ldots$ until $t_n \geq T$}{

    \textbf{Mesh update:}\;
    $\mathbf{X}_h^{n+1} \leftarrow \mathcal{M}(\mathbf{X}_h^n, \mathbf{U}_h^n, \omega_h^n, 1, \Delta t)$\;
    $\mathcal{A}^{t_{n+1}}(\mathbf{x}) \leftarrow \mathbf{x} + \mathbf{X}_h^{n+1}(\mathbf{x})$\;
    $\displaystyle \mathcal{V} \leftarrow \frac{\mathcal{A}^{t_{n+1}} - \mathcal{A}^{t_n}}{\Delta t}$\;

    \textbf{FSI solve:}\;
    $\tilde{\mathbf{w}} \leftarrow \mathbf{u}_h^n$\;
    $\boldsymbol{\Phi}_h^{n+1} \leftarrow \mathcal{S}(\mathbf{u}_h^n, \mathcal{V}, 1, \mathcal{A}^{t_{n+1}}, \Delta t)$\;
}

\Return{$\{\boldsymbol{\Phi}_h^n, \mathcal{A}^{t_n}\}_{n=0}^{N}$}

\end{algorithm}

\paragraph{Second-Order Semi-Implicit Scheme}
To achieve second-order temporal accuracy, we integrate the IMEX Runge-Kutta method into our FSI-ALE framework. The scheme consists of two stages per time step. We define the following coefficients for clarity:
\[
    \gamma = 1 - \frac{1}{\sqrt{2}}, \quad
    \delta = 1 - \frac{1}{2\gamma}, \quad
    \beta_0 = 1-\frac{1-\gamma}{\gamma}, \quad
    \beta_* = \frac{1-\gamma}{\gamma}, \quad
    c_0 = \delta, \quad
    c_* = 1-\delta.
\]
Given the solution $\boldsymbol{\Phi}_h^n$ and domain mapping $\mathcal{A}^{t_n}$ at time $t_n$, the update to $t_{n+1} = t_n + \Delta t$ proceeds as follows:

\begin{algorithm}
\renewcommand{\thealgocf}{PRK2}
\caption{Second-Order Semi-Implicit Scheme}
\label{alg:two_stage_FSI}
\SetAlgoLined
\DontPrintSemicolon
\KwIn{Initial values: $\mathbf{X}_h^0$, $\mathbf{u}_h^0$, $\mathbf{U}_h^0$, $\omega_h^0$, $\Delta t$}
\KwOut{Solutions at each time step: $\boldsymbol{\Phi}_h^n$, $\mathcal{A}^{t_n}$ for $n=0,1,2,\ldots$ until $t_n \geq T$}

\For{$n = 0, 1, 2, \ldots$ until $t_n \geq T$}{

\textbf{Stage 1: $t^* = t_n + \gamma \Delta t$}\;
\textbf{Mesh update:}\;
$\tilde{\mathbf{w}} \leftarrow \mathbf{u}_h^n$;
$\mathbf{X}_h^* \leftarrow \mathcal{M}(\mathbf{X}_h^n, \mathbf{U}_h^n, \omega_h^n, \gamma, \Delta t)$\;
$\mathcal{A}^{t^*}(\mathbf{x}) \leftarrow \mathbf{x} + \mathbf{X}_h^*(\mathbf{x})$\;
\textbf{FSI solve:}\;
$\displaystyle \mathcal{V}^* \leftarrow \frac{\mathcal{A}^{t^*} - \mathcal{A}^{t_n}}{\gamma \Delta t}$\;
$\boldsymbol{\Phi}_h^* \leftarrow \mathcal{S}(\mathbf{u}_h^n, \mathcal{V}^*, \gamma, \mathcal{A}^{t^*}, \Delta t)$\;

\textbf{Stage 2: $t_{n+1} = t_n + \Delta t$}\;
\textbf{Mesh update:}\;
$\mathbf{U}_h^{**} \leftarrow c_0 \mathbf{U}_h^n + c_* \mathbf{U}_h^*$\;
$\omega_h^{**} \leftarrow c_0 \omega_h^n + c_* \omega_h^*$\;
$\mathbf{X}_h^{n+1} \leftarrow \mathcal{M}(\mathbf{X}_h^n, \mathbf{U}_h^{**}, \omega_h^{**}, 1, \Delta t)$\;
$\mathcal{A}^{t_{n+1}}(\mathbf{x}) \leftarrow \mathbf{x} + \mathbf{X}_h^{n+1}(\mathbf{x})$\;
\textbf{FSI solve:}\;
$\displaystyle \mathcal{V}^{n+1} \leftarrow \frac{\mathcal{A}^{t_{n+1}} - \mathcal{A}^{t_n}}{\Delta t}$\;
$\tilde{\mathbf{w}} \leftarrow \beta_0 \mathbf{u}_h^n + \beta_* \mathbf{u}_h^*$\;
$\boldsymbol{\Phi}_h^{n+1} \leftarrow \mathcal{S}(\tilde{\mathbf{w}}, \mathcal{V}^{n+1}, \gamma, \mathcal{A}^{t_{n+1}}, \Delta t)$\;
}
\Return{$\{\boldsymbol{\Phi}_h^n, \mathcal{A}^{t_n}\}_{n=0}^{N}$}
\end{algorithm}

\begin{remark}
Although the algorithm we presented involves solving \eqref{eq:general_IMEX} in each time step which seems to be complicated and difficult for implementation, it can be simplified by using the relationship \eqref{eq:equivalence_coordinate}. The system \eqref{eq:general_IMEX} is equivalent to the system \eqref{eq:final_monolithic_system} with $\bx = \mathcal{A}^t(\tilde{\bx})$. Thus we can solve the system \eqref{eq:final_monolithic_system} in Euler coordinate and then pull back to the reference coordinate by using the mapping $\mathcal{A}^t$. The system \eqref{eq:final_monolithic_system} is a standard variational form and is much easier for numerical implementation.
\end{remark}

\begin{figure}[htbp]
    \centering
    \resizebox{0.8\textwidth}{!}{
        \begin{tikzpicture}[
    font=\sffamily,
    >=Stealth,
    box/.style={rectangle, draw, minimum width=3.2cm, minimum height=2.2cm, inner sep=0pt},
    label_node/.style={scale=1.2}
]

    \node (start_dots) at (1, 5.5) {$\cdots$};

    \node[label_node] (Th_k) at (0, 5) {$\mathcal{T}_h^{k}$};
    
    \node[box] (box1) at (4, 5.5) {};
    \draw (3.3,5.8) -- (3.5,6.0) -- (3.95,5.5) -- (3.5,5.4) -- cycle; 
    \draw (3.5,6.0) -- (3.5,5.4);
    \draw (4.3,5.5) circle (0.35cm); 
    \node[below=0.4cm of box1] {$\mathcal{T}_h^{k}$};

    \node[box] (box2) at (10, 5.5) {};
    \draw (9.1,5.7) -- (9.4,5.9) -- (10.25,5.5) -- (9.4,5.4) -- cycle; 
    \draw (9.4,5.9) -- (9.4,5.4);
    \draw (10.6,5.5) circle (0.35cm);
    \node[below=0.4cm of box2] {$\mathcal{T}_h^{k}$};

    \node[label_node] (Th_k_plus_1) at (0, 1.5) {$\mathcal{T}_h^{k+1}$};

    \node[box] (box3) at (10, 1.5) {};
    \draw (9.5,1.7) -- (9.8,1.9) -- (10.25,1.5) -- (9.8,1.4) -- cycle;
    \draw (9.8,1.9) -- (9.8,1.4);
    \draw (10.6,1.5) circle (0.35cm);
    \node[below=0.4cm of box3] {$\mathcal{T}_h^{k+1}$};

    \node[box] (box4) at (16, 1.5) {};
    \draw (15.4,1.7) -- (15.7,1.9) -- (16.55,1.3) -- (15.7,1.3) -- cycle;
    \draw (15.7,1.9) -- (15.7,1.3);
    \draw (16.9,1.3) circle (0.35cm);
    \node[below=0.4cm of box4] {$\mathcal{T}_h^{k+1}$};

    \draw[->, thick] (start_dots.east) -- (box1.west);
    \draw[->, thick] (Th_k) -- (Th_k_plus_1) node[midway, left] {remeshing};
    \draw[->, thick] (box1.east) -- (box2.west) node[midway, above] {time integration};
    \draw[->, thick] ([xshift=1.2cm]box2.south) -- ([xshift=1.2cm]box3.north) 
        node[midway, right=0.1cm, align=left] {
            if mesh quality is too low:\\
            \textbf{Remesh, Interpolation}
        };
    \draw[->, thick] (box3.east) -- (box4.west) node[midway, above] {time integration};
    \draw[->, thick] (box4.east) -- +(1, 0) node[right] {$\dots$};

    \draw[->, thick] (0, -1.5) -- (18, -1.5);
    \foreach \x/\t in {4/t^{n-1}, 10/t^{n}, 16/t^{n+1}} {
        \filldraw (\x, -1.5) circle (2.5pt) node[below=0.3cm, scale=1.3] {$\t$};
    }

\end{tikzpicture}
    }
    \caption{Mesh reconstruction and interpolation of the velocity field.}
    \label{fig:mesh_reconstruction}
\end{figure}
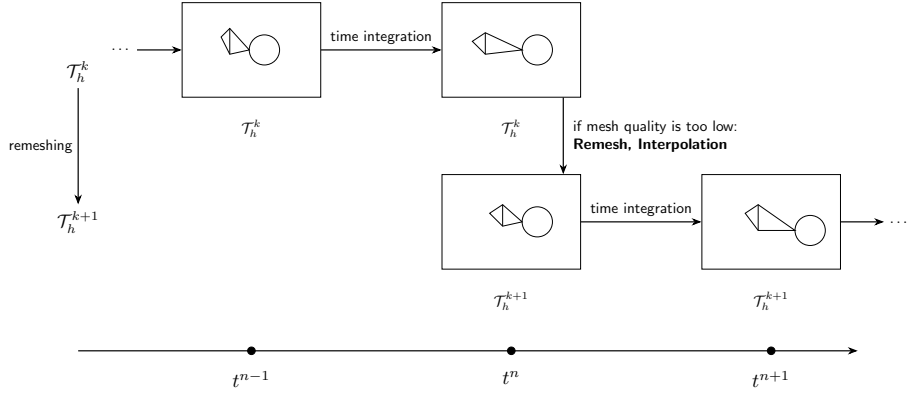

\begin{remark}
In the case of large displacement of the rigid particle, the mesh may be distorted. In this case, we need to reconstruct the mesh, and project the velocity field from the previous time step to the new mesh by interpolation or $L^2-$projection. In this paper, we use the interpolation for simplicity.
To the best of our knowledge, although there is no proof that the frequent interpolation of the velocity field will not affect the accuracy of the scheme, the numerical experiments still show that the algorithm produces good results as well \cite{gawlik2014high}. Thus a flow chat for the algorithm is illustrated in Figure \ref{fig:mesh_reconstruction}.
\end{remark}

\section{Well-posedness of the linearized system and the robust preconditioner}\label{sec:preconditioner}

In this section, we assume the ALE mapping $\mathcal{A}^t$ is given and analyze the well-posedness of the linearized system. The bilinear form $c^t(\cdot,\cdot)$ is defined as
\begin{equation}
    c^t(\boldsymbol{\lambda},\boldsymbol{\mu}) = \frac{\Reynolds}{\gamma\Delta t}(\boldsymbol{\lambda},\boldsymbol{\mu})_{\Omega_s^t} + 2(\varepsilon(\boldsymbol{\lambda}),\varepsilon(\boldsymbol{\mu}))_{\Omega_s^t}.
\end{equation}

We also discuss the preconditioner for the linearized system, which is crucial for the convergence of iterative solvers. To prepare for this discussion, we introduce the following notation for the function spaces:
\begin{equation}
    V = [H_0^1(\Omega)]^d, \quad
    Q = L^2_0(\Omega_f^t), \quad
    T_M = \mathbb{R}^d, \quad
    R_M = \mathbb{R}^d.
\end{equation}
where the inner products on these spaces are defined as:
\begin{equation}
    \begin{aligned}
        (\bu, \bv)_V = \frac{\Reynolds}{\gamma\Delta t}(\bu, \bv)_\Omega + 2(\varepsilon&(\bu), \varepsilon(\bv))_\Omega,~ 
        (p, q)_Q = \langle \tilde{S} p, q \rangle, \\
        (\bU, \mathbf{V})_{T_M} = c^t(\bU, \mathbf{V}), ~
        (\boldsymbol{\omega}, \boldsymbol{\xi})_{R_M} &= c^t(\boldsymbol{\omega} \times \mathbf{r}, \boldsymbol{\xi} \times \mathbf{r}), ~
        (\boldsymbol{\lambda}, \boldsymbol{\mu})_{\boldsymbol{\Lambda}} = c^t(\boldsymbol{\lambda}, \boldsymbol{\mu}),
    \end{aligned}
\end{equation}
with $\tilde{S}$ defined by 
\begin{equation}\label{eq:tildeS}
    \begin{aligned}
        &M: Q \to Q', \quad \langle Mp, q \rangle = (p, q)_{\Omega_f^t}, \\
        &A: Q \to Q', \quad \langle Ap, q \rangle = (\nabla p, \nabla q)_{\Omega_f^t}, \\
        &\tilde{S} = (M^{-1} + \frac{\Reynolds}{\gamma\Delta t} A^{-1})^{-1}.
    \end{aligned}
\end{equation}
The corresponding norms are given by:
\begin{equation}
    \begin{aligned}
        &\|\bu\|_V^2 = (\bu, \bu)_V, \quad
        \|p\|_Q^2 = (p, p)_Q, \quad
        \|\boldsymbol{\lambda}\|_{\boldsymbol{\Lambda}}^2 = (\boldsymbol{\lambda}, \boldsymbol{\lambda})_{\boldsymbol{\Lambda}}, \\
        &\|\bU\|_{T_M}^2 = (\bU, \bU)_{T_M}, \quad
        \|\boldsymbol{\omega}\|_{R_M}^2 = (\boldsymbol{\omega}, \boldsymbol{\omega})_{R_M}.
    \end{aligned}
\end{equation}

We neglect the convection term (since the Reynolds number is assumed to be small) and consider the following linearized system with augmented Lagrange multiplier:

\begin{problem}
Given ${\mathbf{f}}_\bv\in V'$, ${\mathbf{f}}_\mathbf{V}\in T_M'$, ${\mathbf{f}}_\bomega\in R_M'$, ${\mathbf{g}}\in Q'$, and ${\mathbf{h}}\in \boldsymbol{\Lambda}'$,
find $\bu\in V$, $p\in Q$, $\bU \in T_M$, $\boldsymbol{\omega}\in R_M$, ${\boldsymbol{\lambda}}\in \boldsymbol{\Lambda}$ such that
\begin{equation}\label{eq:linearised_continuous}
    \begin{aligned}
        &\frac{\Reynolds}{\gamma \Delta t}\, {m}^t({\bu}, {\bv}) + {a}^t(\bu,{\bv}) + {c}^t(\bu - {\bU} - \boldsymbol{\omega}\times \mathbf{r},\bv - {\mathbf{V}} - {\boldsymbol{\xi}}\times\mathbf{r}) \\
        &\quad + {b}^t(p, {\bv}) + {c}^t({\boldsymbol{\lambda}}, {\bv} - {\mathbf{V}} - {\boldsymbol{\xi}}\times\mathbf{r}) = \langle{\mathbf{f}}_\bv,{\bv}\rangle + \langle{\mathbf{f}}_\mathbf{V},{\mathbf{V}}\rangle + \langle{\mathbf{f}}_\bomega,{\boldsymbol{\xi}}\rangle, \\
        &{b}^t({q},\bu) = \langle{\mathbf{g}},{q}\rangle, \\
        &{c}^t({\boldsymbol{\mu}},\bu - \bU - \boldsymbol{\omega}\times\mathbf{r})  = \langle{\mathbf{h}},{\boldsymbol{\mu}}\rangle,
    \end{aligned}
\end{equation}
for all ${\bv}\in V$, ${q}\in Q$, ${\mathbf{V}} \in T_M$, ${\boldsymbol{\xi}}\in R_M$, ${\boldsymbol{\mu}}\in \boldsymbol{\Lambda}$, where $\gamma > 0$ is a parameter to be chosen for the scheme.
\end{problem}

The discretized linearized system is given by:

\begin{problem}
Given ${\mathbf{f}}_\bv\in V_h'$, ${\mathbf{f}}_\mathbf{V}\in T_M'$, ${\mathbf{f}}_\bomega\in R_M'$, ${\mathbf{g}}\in Q_h'$, and ${\mathbf{h}}\in \boldsymbol{\Lambda}_h'$,
find $\bu_h\in V_h$, $p_h\in Q_h$, $\bU_h \in T_M$, $\boldsymbol{\omega}_h\in R_M$, ${\boldsymbol{\lambda}}_h\in \boldsymbol{\Lambda}_h$ such that
\begin{equation}\label{eq:linearised_discretized}
    \begin{aligned}
        &\frac{\Reynolds}{\gamma \Delta t}\, {m}^t({\bu_h}, {\bv}_h) + {a}^t(\bu_h,{\bv}_h) + {c}^t(\bu_h - {\bU}_h - \boldsymbol{\omega}_h\times \mathbf{r},\bv_h - {\mathbf{V}}_h - {\boldsymbol{\xi}}_h\times\mathbf{r}) \\
        &\quad + {b}^t(p_h, {\bv}_h) + {c}^t({\boldsymbol{\lambda}}_h, {\bv}_h - {\mathbf{V}}_h - {\boldsymbol{\xi}}_h\times\mathbf{r}) = \langle{\mathbf{f}}_\bv,{\bv}_h\rangle + \langle{\mathbf{f}}_\mathbf{V},{\mathbf{V}}_h\rangle + \langle{\mathbf{f}}_\bomega,{\boldsymbol{\xi}}_h\rangle, \\
        &{b}^t({q}_h,\bu_h) = 0, \\
        &{c}^t({\boldsymbol{\mu}}_h,\bu_h - \bU_h - \boldsymbol{\omega}_h\times\mathbf{r})  = 0,
    \end{aligned}
\end{equation}
for all ${\bv}_h\in V_h$, ${q}_h\in Q_h$, ${\mathbf{V}}_h \in T_M$, ${\boldsymbol{\xi}}_h\in R_M$, ${\boldsymbol{\mu}}_h\in \boldsymbol{\Lambda}_h$, where $\gamma > 0$ is a parameter to be chosen for the scheme.
\end{problem}

\subsection{Main theorem}
The main results of this section are summarized in the following theorem, which establishes the coercivity, continuity, and LBB conditions necessary for the well-posedness of the linearized systems \eqref{eq:linearised_discretized} and convergence analysis of the preconditioner.
\begin{theorem}\label{lem:coercivity_continuity}
    There exist positive constants $\alpha$, $C_a$ and $C_b$ such that the following coercivity and continuity conditions hold for all $\bu \in V_h$, $\bU \in T_M$, $\boldsymbol{\omega} \in R_M$, $q \in Q_h$, and $\boldsymbol{\mu} \in \boldsymbol{\Lambda}_h$:
    \begin{equation}
        \begin{aligned}
            &\frac{\Reynolds}{\gamma\Delta t} m^t(\bu,\bu) + a^t(\bu,\bu) + c^t(\bu - \bU - \boldsymbol{\omega}\times\mathbf{r}, \bu - \bU - \boldsymbol{\omega}\times\mathbf{r}) \geq \alpha \left( \|\bu\|_V^2 + \|\bU\|_{T_M}^2 + \|\boldsymbol{\omega}\|_{R_M}^2 \right), \\
            &\frac{\Reynolds}{\gamma\Delta t} m^t(\bu,\bu) + a^t(\bu,\bu) + c^t(\bu - \bU - \boldsymbol{\omega}\times\mathbf{r}, \bu - \bU - \boldsymbol{\omega}\times\mathbf{r}) \leq C_a \left( \|\bu\|_V^2 + \|\bU\|_{T_M}^2 + \|\boldsymbol{\omega}\|_{R_M}^2 \right), \\
            &\left| b^t(q, \bu) + c^t(\boldsymbol{\mu}, \bu - \bU - \boldsymbol{\omega}\times\mathbf{r}) \right| \leq C_b \left( \|q\|_Q^2 + \|\boldsymbol{\mu}\|_{\boldsymbol{\Lambda}}^2 \right)^{1/2} \left( \|\bu\|_V^2 + \|\bU\|_{T_M}^2 + \|\boldsymbol{\omega}\|_{R_M}^2 \right)^{1/2}.
        \end{aligned}
    \end{equation}
    Moreover, the following LBB condition holds:
    \begin{equation}
        \inf_{\substack{q \in Q \\ \boldsymbol{\mu} \in \boldsymbol{\Lambda}}} \sup_{\substack{\bu \in V \\ \bU \in T_M \\ \boldsymbol{\omega} \in R_M}} \frac{b^t(q, \bu) + c^t(\boldsymbol{\mu}, \bu - \bU - \boldsymbol{\omega}\times\mathbf{r})}{\left( \|q\|_Q^2 + \|\boldsymbol{\mu}\|_{\boldsymbol{\Lambda}}^2 \right)^{1/2} \left( \|\bu\|_V^2 + \|\bU\|_{T_M}^2 + \|\boldsymbol{\omega}\|_{R_M}^2 \right)^{1/2}} = \beta > 0.
    \end{equation}
\end{theorem}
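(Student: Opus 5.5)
We treat the statement as a Brezzi-type verification for the saddle-point system. The primal variable is $(\bu,\bU,\boldsymbol{\omega})\in V\times T_M\times R_M$, the multiplier is $(q,\boldsymbol{\mu})\in Q\times\boldsymbol{\Lambda}$, and all bounds are to be independent of $\Reynolds/(\gamma\Delta t)$ and $h$. Write $\kappa=\Reynolds/(\gamma\Delta t)$. Then $\kappa m^t(\bu,\bu)+a^t(\bu,\bu)=\|\bu\|_V^2$ exactly, and $c^t$ restricted to $\Omega_s^t$ is the same weighted $H^1$ form.

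\textbf{Continuity.} Both continuity bounds reduce to Cauchy--Schwarz. For the $a$-part we first use $c^t(\bu|_{\Omega_s^t},\bu|_{\Omega_s^t})\le\|\bu\|_V^2$. Next, $\|\bU+\boldsymbol{\omega}\times\br\|_{\boldsymbol{\Lambda}}\le\|\bU\|_{T_M}+\|\boldsymbol{\omega}\|_{R_M}$ by the triangle inequality; note $\varepsilon(\bU+\boldsymbol{\omega}\times\br)=0$. For the $b$-part we need $|b^t(q,\bu)|\le C\|q\|_Q\|\bu\|_V$. This is the standard estimate for the interpolated norm $\tilde S=(M^{-1}+\kappa A^{-1})^{-1}$: the divergence is bounded from $V$ into the dual of $\tilde S$ because $\|\nabla\cdot\bu\|_{L^2}\lesssim\|\varepsilon(\bu)\|$ (Korn on $H^1_0$) and $\|\nabla\cdot\bu\|_{H^{-1}}\lesssim\|\bu\|_{L^2}$. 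Interpolating gives $\langle\tilde S^{-1}\nabla\cdot\bu,\nabla\cdot\bu\rangle\lesssim\|\bu\|_V^2$. This is the Mardal--Winther/Cahouet--Chabard argument. The $c$-part of the mixed form is bounded directly in the $\boldsymbol{\Lambda}$ norm.

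\textbf{Coercivity.} Set $\mathbf{w}=\bU+\boldsymbol{\omega}\times\br$. The left-hand side is $\|\bu\|_V^2+\|\bu-\mathbf{w}\|_{\boldsymbol{\Lambda}}^2$. Since $\|\mathbf{w}\|_{\boldsymbol{\Lambda}}\le\|\bu\|_{\boldsymbol{\Lambda}}+\|\bu-\mathbf{w}\|_{\boldsymbol{\Lambda}}$ and $\|\bu\|_{\boldsymbol{\Lambda}}\le\|\bu\|_V$, we get $\|\mathbf{w}\|_{\boldsymbol{\Lambda}}^2\le2(\text{LHS})$. It remains to show $\|\bU\|_{T_M}^2+\|\boldsymbol{\omega}\|_{R_M}^2\le C\|\mathbf{w}\|_{\boldsymbol{\Lambda}}^2$. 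Here $\|\mathbf{w}\|_{\boldsymbol{\Lambda}}^2=\kappa\|\bU+\boldsymbol{\omega}\times\br\|^2_{L^2(\Omega_s^t)}$. Because $\br$ is measured from the center of mass and the body is homogeneous, $\int_{\Omega_s^t}\br=0$, so the cross term vanishes and $\|\mathbf{w}\|^2=\|\bU\|^2+\|\boldsymbol{\omega}\times\br\|^2$ exactly. This yields $\alpha=1/3$ (or similar).

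\textbf{Inf-sup.} This is the main step. We split the multiplier into $q$ and $\boldsymbol{\mu}$. Given $(q,\boldsymbol{\mu})$, we build the test function in two pieces and combine them.

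First, for $\boldsymbol{\mu}$: choose $\bU=\boldsymbol{\omega}=0$ and let $\bu=E\boldsymbol{\mu}$, where $E:\boldsymbol{\Lambda}_h\to V_h$ is a discrete extension from $\Omega_s^t$ to $\Omega$. We take $E$ as the zero extension of the nodal values off $\Omega_s^t$, or better a discrete harmonic extension, which is possible since the mesh is fitted and $\boldsymbol{\Lambda}_h$ is the trace of $V_h$. We need $\|E\boldsymbol{\mu}\|_V\lesssim\|\boldsymbol{\mu}\|_{\boldsymbol{\Lambda}}$ uniformly in $\kappa$. The difficulty is that the extension must be bounded in both $L^2$ and $H^1$ at once. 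We handle this with a fixed-width cutoff on a collar around $\Gamma_I^t$, using the assumption that the body stays away from $\partial\Omega$. This gives $c^t(\boldsymbol{\mu},E\boldsymbol{\mu})=\|\boldsymbol{\mu}\|_{\boldsymbol{\Lambda}}^2$. However, $b^t(q,E\boldsymbol{\mu})$ is generally nonzero.

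Second, for $q$: we use the Taylor--Hood inf-sup on $\Omega_f^t$ with $\bu_q\in V_h$ vanishing on $\Omega_s^t$, so that $c^t(\boldsymbol{\mu},\bu_q)=0$. The bound must be robust in the $\tilde S$ norm, which again requires the uniform-in-$\kappa$ Stokes inf-sup of Mardal--Winther type. Taylor--Hood gives both the $H^1$ inf-sup and the $L^2$--$H^1$ (Bramble/Olshanskii) estimate needed for this.

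Finally, we take $\bu=\bu_q+\theta E\boldsymbol{\mu}$ with small $\theta$. The cross term satisfies $|b^t(q,E\boldsymbol{\mu})|\le C_b\|q\|_Q\|\boldsymbol{\mu}\|_{\boldsymbol{\Lambda}}$, and Young's inequality absorbs it. The obstacles we expect are the two $\kappa$-robust ingredients: the uniform extension bound and the uniform Stokes inf-sup on $\Omega_f^t$ in the $\tilde S$ norm. The constant should also be uniform for the pure-Neumann pressure space $L_0^2(\Omega_f^t)$ with an interior hole; we would cite Olshanskii-type results for the latter.
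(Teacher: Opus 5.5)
\textbf{Verdict.} Your outline follows the paper's route, but the $\kappa$-robust estimates it aims for are false, and the paper's proof has the same gap. The route is: coercivity from the $c^t$-orthogonality of translations and rotations ($\int_{\Omega_s^t}\br\,d\bx=0$, $\varepsilon(\bU+\bomega\times\br)=0$); $b$-continuity via the splitting $\tilde S^{-1}=M^{-1}+\kappa A^{-1}$, with $\kappa=\Reynolds/(\gamma\Delta t)$; and the LBB via an extension of $\boldsymbol\mu$ plus a Stokes construction on $\Omega_f^t$ with $\bU=\bomega=\mathbf 0$. You absorb the cross term by Young, while the paper corrects the divergence of $\bu_1$ exactly, but both rely on the same estimate. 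Coercivity and the $a$- and $c$-continuity bounds are correct and uniform.

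\textbf{Where the $b$-continuity fails.} The failing step is $\|\nabla\cdot\bu\|_{H^{-1}}\lesssim\|\bu\|_{L^2}$. Since $A$ is the Neumann Laplacian on $L^2_0(\Omega_f^t)$, the relevant $H^{-1}$ is the dual of $H^1(\Omega_f^t)\cap L^2_0(\Omega_f^t)$. For $\bu\in V$, which need not vanish on $\Gamma_I^t$, integration by parts gives
\[
b^t(p,\bu)=\int_{\Omega_f^t}\bu\cdot\nabla p\,d\bx-\int_{\Gamma_I^t}p\,\bu\cdot\mathbf n\,ds ,
\]
and the interface term is not bounded by $\|\bu\|_{L^2}\|\nabla p\|$. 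The paper drops this term when it writes $\langle T_\bv^{-1}B^*q,B^*p\rangle=(T_\bv^{-1}B^*q,\nabla p)$, and again in its ``Poincar\'e'' bound for $\nabla\cdot\bu_1$ in the LBB proof.

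The uniform bound is in fact false. Take $q_0\in Q$ smooth with $q_0\equiv1$ near $\Gamma_I^t$. Since $\tilde S\le\kappa^{-1}A$, we have $\|q_0\|_Q^2\le\kappa^{-1}\|\nabla q_0\|^2$. Take $\bu_\delta\in V$ supported in a $\delta$-collar of $\Gamma_I^t$ with $\bu_\delta=\mathbf n$ on $\Gamma_I^t$. For small $\delta$, $\|\bu_\delta\|_V^2\lesssim\kappa\delta+\delta^{-1}$ and $b^t(q_0,\bu_\delta)=-|\Gamma_I^t|$. Choosing $\delta=\kappa^{-1/2}$ gives $|b^t(q_0,\bu_\delta)|\gtrsim\kappa^{1/4}\|q_0\|_Q\|\bu_\delta\|_V$. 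The same happens discretely whenever $h\lesssim\kappa^{-1/2}$.

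\textbf{Why no better extension rescues the LBB.} With the block-diagonal norm $\|q\|_Q^2+\|\boldsymbol\mu\|_{\boldsymbol\Lambda}^2$, the uniform LBB itself fails. Let $\boldsymbol\mu\in\boldsymbol\Lambda$ be the $c^t$-Riesz representer of $\bv\mapsto\int_{\Gamma_I^t}\bv\cdot\mathbf n\,ds$. Since $\int_{\Gamma_I^t}\mathbf n\,ds=0$ and $\int_{\Gamma_I^t}(\bomega\times\br)\cdot\mathbf n\,ds=0$, the interface terms cancel, and for every $(\bu,\bU,\bomega)$
\[
b^t(q_0,\bu)+c^t(\boldsymbol\mu,\bu-\bU-\bomega\times\br)=\int_{\Omega_f^t}\bu\cdot\nabla q_0\,d\bx\le\kappa^{-1/2}\|\nabla q_0\|\,\|\bu\|_V .
\]
Testing with a boundary layer of width $\kappa^{-1/2}$ inside $\Omega_s^t$ gives $\|\boldsymbol\mu\|_{\boldsymbol\Lambda}^2\gtrsim\kappa^{-1/2}$. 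Hence $\beta\lesssim\kappa^{-1/4}$. This also holds discretely, using the representer in $\boldsymbol\Lambda_h$, which contains $\bu_h|_{\Omega_s^t}$ and the rigid motions.

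\textbf{A separate defect as $\kappa\to0$.} With $\bU=\bomega=\mathbf 0$ and rigid $\boldsymbol\mu=\mathbf a$, you get $c^t(\mathbf a,\bu)=\kappa\,\mathbf a\cdot\int_{\Omega_s^t}\bu\,d\bx\lesssim\kappa|\mathbf a|\,\|\bu\|_V$. This is to be compared with $\|\boldsymbol\mu\|_{\boldsymbol\Lambda}=(\kappa|\Omega_s^t|)^{1/2}|\mathbf a|$, so the ratio is $\lesssim\kappa^{1/2}$. The fix is to absorb the rigid part of $\boldsymbol\mu$ with $(\bU,\bomega)$ and extend only its $L^2(\Omega_s^t)$-orthogonal remainder. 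For that remainder, Korn's second inequality makes your fixed-width cutoff uniform.

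\textbf{What survives.} The literal statement holds with $\kappa$-dependent constants. Your outline proves it once the false bound is replaced by $\kappa\langle A^{-1}B\bu,B\bu\rangle\lesssim\kappa\|\nabla\cdot\bu\|_{L^2}^2$. A genuinely $\kappa$-robust result needs a multiplier norm that couples $q|_{\Gamma_I^t}$ with $\boldsymbol\mu$.
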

The proof of this theorem is given in the next subsection. Based on this theorem, we can establish the well-posedness of the linearized system \eqref{eq:linearised_discretized} by applying the LBB theory. Furthermore, these results provide a foundation for designing a robust preconditioner for the discrete system \eqref{eq:linearised_discretized}.

The following corollary states the well-posedness of the discrete system, which follows directly from Theorem \ref{lem:coercivity_continuity} and the LBB theory \cite{babuvska1973finite,boffi2013mixed}
\begin{corollary}
    The system \eqref{eq:linearised_discretized} is well-posed: the solution exists, is unique, and satisfies the stability estimate
    \begin{equation}
        \|\bu_h\|_V^2 + \|p_h\|_Q^2 + \|\bU_h\|_{T_M}^2 + \|\boldsymbol{\omega}_h\|_{R_M}^2 + \|\boldsymbol{\lambda}_h\|_{\boldsymbol{\Lambda}}^2 \leq C_s \left( \|{\mathbf{f}}_\bv\|_{V_h'}^2 + \|{\mathbf{f}}_\mathbf{V}\|_{T_M'}^2 + \|{\mathbf{f}}_\bomega\|_{R_M'}^2 + \|{\mathbf{g}}\|_{Q_h'}^2 + \|{\mathbf{h}}\|_{\boldsymbol{\Lambda}_h'}^2 \right),
    \end{equation}
    where the constant $C_s$ depends only on $\alpha$, $C_a$, $C_b$, and $\beta$ from Theorem \ref{lem:coercivity_continuity}.
\end{corollary}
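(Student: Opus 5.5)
The corollary follows by applying the abstract Babu\v{s}ka--Brezzi theory for saddle-point problems to \eqref{eq:linearised_discretized}, with Theorem~\ref{lem:coercivity_continuity} supplying every hypothesis. The plan is to group the unknowns into a primal block and a multiplier block, so that the system has the standard form
\[
\mathcal{A}(x,y)+\mathcal{B}(\eta,y)=\langle F,y\rangle,\qquad \mathcal{B}(\zeta,x)=\langle G,\zeta\rangle .
\]
Here $x=(\bu_h,\bU_h,\boldsymbol{\omega}_h)$ and $y=(\bv_h,\mathbf{V}_h,\boldsymbol{\xi}_h)$ range over $X_h=V_h\times T_M\times R_M$, equipped with the product norm $\|x\|_X^2=\|\bu\|_V^2+\|\bU\|_{T_M}^2+\|\boldsymbol{\omega}\|_{R_M}^2$. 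The multipliers $\eta=(p_h,\boldsymbol{\lambda}_h)$ and $\zeta=(q_h,\boldsymbol{\mu}_h)$ range over $M_h=Q_h\times\boldsymbol{\Lambda}_h$, with $\|\eta\|_M^2=\|p\|_Q^2+\|\boldsymbol{\lambda}\|_{\boldsymbol{\Lambda}}^2$. The forms are
\[
\mathcal{A}(x,y)=\tfrac{\Reynolds}{\gamma\Delta t}m^t(\bu,\bv)+a^t(\bu,\bv)+c^t(\bu-\bU-\boldsymbol{\omega}\times\br,\ \bv-\mathbf{V}-\boldsymbol{\xi}\times\br),
\]
\[
\mathcal{B}(\eta,y)=b^t(p,\bv)+c^t(\boldsymbol{\lambda},\bv-\mathbf{V}-\boldsymbol{\xi}\times\br).
\]
The data are $F=(\mathbf{f}_\bv,\mathbf{f}_\mathbf{V},\mathbf{f}_\bomega)\in X_h'$ and $G=(\mathbf{g},\mathbf{h})\in M_h'$. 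I will take general $G$ rather than the homogeneous constraints written in \eqref{eq:linearised_discretized}, since the corollary's estimate includes $\mathbf{g}$ and $\mathbf{h}$. Because the norms are Hilbert product norms, the dual norms split as $\|F\|_{X_h'}^2=\|\mathbf{f}_\bv\|_{V_h'}^2+\|\mathbf{f}_\mathbf{V}\|_{T_M'}^2+\|\mathbf{f}_\bomega\|_{R_M'}^2$, and similarly for $G$.

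Next I check the hypotheses. $\mathcal{A}$ is symmetric and positive semidefinite, so it is an inner product up to equivalence. By Cauchy--Schwarz in that inner product, the second inequality of Theorem~\ref{lem:coercivity_continuity} upgrades from the diagonal bound to continuity, $|\mathcal{A}(x,y)|\le C_a\|x\|_X\|y\|_X$. The first inequality gives coercivity of $\mathcal{A}$ on all of $X_h$ with constant $\alpha$, which in particular gives coercivity on $\ker\mathcal{B}$. The third inequality is exactly continuity of $\mathcal{B}$ with constant $C_b$, and the LBB statement gives the inf--sup constant $\beta$.

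The one delicate point is that the inf--sup condition must hold on the discrete pair $(X_h,M_h)$ with an $h$-independent $\beta$, whereas the theorem's display writes the infimum over $Q,\boldsymbol{\Lambda}$. I will read the theorem as the discrete statement, consistent with its preamble, which quantifies over $V_h,Q_h,\boldsymbol{\Lambda}_h$. If only the continuous version were available, I would transfer it with a Fortin operator. That operator would combine the Taylor--Hood Stokes Fortin projection for the $b^t$ part with the fact that $\boldsymbol{\Lambda}_h$ consists of restrictions of $V_h$-type functions to $\Omega_s^t$: for the $c^t$ part, take $\bv_h$ to be a discrete extension of $\boldsymbol{\mu}_h$ and $\mathbf{V}=\boldsymbol{\xi}=0$. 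This transfer is the step I regard as the main obstacle.

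With the hypotheses in place, the finite-dimensional square system has unique solvability as soon as the homogeneous problem has only the trivial solution. That follows from Brezzi's theorem: with $F=0$ and $G=0$, coercivity forces $x=0$, and then the inf--sup condition forces $\eta=0$. Brezzi's a priori bounds then give
\[
\|x\|_X\le \tfrac1\alpha\|F\|+\tfrac1\beta\bigl(1+\tfrac{C_a}{\alpha}\bigr)\|G\|,\qquad
\|\eta\|_M\le \tfrac1\beta\bigl(1+\tfrac{C_a}{\alpha}\bigr)\|F\|+\tfrac{C_a}{\beta^2}\bigl(1+\tfrac{C_a}{\alpha}\bigr)\|G\|.
\]
Squaring, adding, and using $(a+b)^2\le 2a^2+2b^2$ yields the stated estimate with $C_s$ depending only on $\alpha,C_a,\beta$. $C_b$ can be kept in $C_s$ if one uses the form of the bound that also controls $\|\mathcal{B}\|$.
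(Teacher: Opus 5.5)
Your proposal is correct and is essentially the paper's argument: the paper obtains this corollary in one line by invoking Babu\v{s}ka--Brezzi theory with the constants of Theorem~\ref{lem:coercivity_continuity}. Your grouping into $X_h=V_h\times T_M\times R_M$ and $M_h=Q_h\times\boldsymbol{\Lambda}_h$, the Cauchy--Schwarz upgrade of the diagonal bound to continuity for the symmetric form, and the explicit Brezzi bounds (with general constraint data $\mathbf{g},\mathbf{h}$) simply spell that citation out. The $h$-independent discrete inf--sup you need is precisely Lemma~\ref{thm:lbb_discrete}, so reading the theorem as a discrete statement is the intended reading, and the Fortin-operator fallback is unnecessary.
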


Moreover, Theorem \ref{lem:coercivity_continuity} enables us to design a block preconditioner for the discrete system \eqref{eq:linearised_discretized}. The preconditioner is constructed based on the characterization of the Schur complements associated with the saddle-point structure of the system. Let $\mathbf{A}$ and $\mathbf{B}$ be the block matrices corresponding to the bilinear forms in \eqref{eq:linearised_discretized}, i.e.,
\begin{equation}
    \begin{aligned}
        \langle \mathbf{A} \begin{pmatrix} \bu_h \\ \bU_h \\ \boldsymbol{\omega}_h \end{pmatrix}, \begin{pmatrix} \bv_h \\ \mathbf{V}_h \\ \boldsymbol{\xi}_h \end{pmatrix} \rangle &= \frac{\Reynolds}{\gamma \Delta t} m^t({\bu_h}, {\bv}_h) + {a}^t(\bu_h,{\bv}_h) + {c}^t(\bu_h - {\bU}_h - \boldsymbol{\omega}_h\times \mathbf{r},\bv_h - {\mathbf{V}}_h - {\boldsymbol{\xi}}_h\times\mathbf{r}), \\
        \langle \mathbf{B}\begin{pmatrix} \bu_h \\ \bU_h \\ \boldsymbol{\omega}_h \end{pmatrix} ,\begin{pmatrix} q_h \\ \boldsymbol{\eta}_h \end{pmatrix}\rangle &= {b}^t(q_h, {\bu}_h) + {c}^t({\boldsymbol{\eta}}_h, {\bu}_h - {\bU}_h - {\boldsymbol{\omega}}_h\times\mathbf{r}).
    \end{aligned}
\end{equation}
Then the linearized system \eqref{eq:linearised_discretized} can be expressed in the block matrix form as
\begin{equation}
    \begin{pmatrix}
        \mathbf{A} & \mathbf{B}^T \\
        \mathbf{B} & 0
    \end{pmatrix}
    \begin{pmatrix}
        \bu_h \\ \bU_h \\ \boldsymbol{\omega}_h \\ p_h \\ \boldsymbol{\lambda}_h
    \end{pmatrix}
    =
    \begin{pmatrix}
        {\mathbf{f}}_\bv \\ {\mathbf{f}}_\mathbf{V} \\ {\mathbf{f}}_\bomega \\ {\mathbf{g}} \\ {\mathbf{h}}
    \end{pmatrix}.
\end{equation}
The Schur complement $\mathbf{S}$ associated with this system is given by $\mathbf{S} = \mathbf{B}\mathbf{A}^{-1}\mathbf{B}^T$.
We define the matrix $\tilde{\mathbf{S}}$ for preconditioning the Schur complement as
\begin{equation}
    \tilde{\mathbf{S}} = \begin{pmatrix}
        \tilde{S} & 0 \\
        0 & P_{\boldsymbol{\Lambda}}
    \end{pmatrix},
\end{equation}
where $\tilde{S}$ is defined in \eqref{eq:tildeS} and $P_{\boldsymbol{\Lambda}}: \boldsymbol{\Lambda}_h \to \boldsymbol{\Lambda}_h'$ is defined by
\begin{equation}
    \langle P_{\boldsymbol{\Lambda}} \boldsymbol{\mu}_h, \boldsymbol{\nu}_h \rangle = c^t(\boldsymbol{\mu}_h, \boldsymbol{\nu}_h), \quad \forall \boldsymbol{\mu}_h, \boldsymbol{\nu}_h \in \boldsymbol{\Lambda}_h.
\end{equation}
The proposed block preconditioner $\mathbf{P}$ for the discrete system \eqref{eq:linearised_discretized} is given by
\begin{equation}\label{eq:preconditioner}
    \mathbf{P} = 
    \begin{pmatrix}
        \mathbf{I} & -\mathbf{A}^{-1}\mathbf{B}^T \\
        0 & \mathbf{I}
    \end{pmatrix}
    \begin{pmatrix}
        \mathbf{A}^{-1} & 0 \\
        0 & \tilde{\mathbf{S}}^{-1}
    \end{pmatrix}
    \begin{pmatrix}
        \mathbf{I} & 0 \\
        -\mathbf{B}\mathbf{A}^{-1} & \mathbf{I}
    \end{pmatrix},
\end{equation}
where the inverse matrices $\mathbf{A}^{-1}$ and $\tilde{\mathbf{S}}^{-1}$ can be approximated using efficient solvers such as multigrid methods. The following corollary states that the condition number of the preconditioned Schur complement is bounded independently of the problem parameters, ensuring the robustness of the preconditioner.

\begin{corollary}
    The condition number of the preconditioned Schur complement $\tilde{\mathbf{S}}^{-1}\mathbf{S}$ is bounded independently of the problem parameters:
    \begin{equation}
        \kappa(\tilde{\mathbf{S}}^{-1}\mathbf{S}) \leq C,
    \end{equation}
    where the constant $C$ depends only on $\alpha$, $C_a$, $C_b$, and $\beta$ from Theorem \ref{lem:coercivity_continuity}.
\end{corollary}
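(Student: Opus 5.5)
The plan is to characterize the Schur complement through a Rayleigh quotient and then squeeze it between multiples of the $\tilde{\mathbf{S}}$-quadratic form, using the four constants of Theorem~\ref{lem:coercivity_continuity}. Write $x=(\bu_h,\bU_h,\boldsymbol{\omega}_h)\in X_h:=V_h\times T_M\times R_M$ and $y=(q_h,\boldsymbol{\mu}_h)\in Y_h:=Q_h\times\boldsymbol{\Lambda}_h$. Equip these spaces with the product norms
\begin{equation*}
\|x\|_X^2=\|\bu_h\|_V^2+\|\bU_h\|_{T_M}^2+\|\boldsymbol{\omega}_h\|_{R_M}^2,\qquad \|y\|_Y^2=\|q_h\|_Q^2+\|\boldsymbol{\mu}_h\|_{\boldsymbol{\Lambda}}^2 .
\end{equation*}
The first observation is that, by the definitions of $(\cdot,\cdot)_Q$ and $(\cdot,\cdot)_{\boldsymbol{\Lambda}}$, we have $\langle\tilde{\mathbf{S}}y,y\rangle=\|y\|_Y^2$ exactly, provided $\tilde S$ is understood as its discrete realization on $Q_h$. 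So the claim reduces to the norm equivalence $c_1\|y\|_Y^2\le\langle\mathbf{S}y,y\rangle\le c_2\|y\|_Y^2$. Since $\mathbf{S}$ and $\tilde{\mathbf{S}}$ are symmetric positive definite, this gives $\kappa(\tilde{\mathbf{S}}^{-1}\mathbf{S})\le c_2/c_1$.

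Second, $\mathbf{A}$ is symmetric, because all three of its terms are symmetric bilinear forms. By the coercivity bound of Theorem~\ref{lem:coercivity_continuity} it is also positive definite on $X_h$. Cauchy--Schwarz in the $\mathbf{A}$-inner product then gives the standard identity
\begin{equation*}
\langle\mathbf{S}y,y\rangle=\langle\mathbf{A}^{-1}\mathbf{B}^Ty,\mathbf{B}^Ty\rangle=\sup_{0\neq x\in X_h}\frac{\langle\mathbf{B}x,y\rangle^2}{\langle\mathbf{A}x,x\rangle},
\end{equation*}
with the supremum attained at $x=\mathbf{A}^{-1}\mathbf{B}^Ty$. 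The coercivity and continuity bounds give $\alpha\|x\|_X^2\le\langle\mathbf{A}x,x\rangle\le C_a\|x\|_X^2$, which allows the denominator to be replaced by $\|x\|_X^2$ at the cost of the factors $1/\alpha$ and $1/C_a$.

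Third, for the upper bound I will use the continuity of $\mathbf{B}$, namely $|\langle\mathbf{B}x,y\rangle|\le C_b\|y\|_Y\|x\|_X$. This yields $\langle\mathbf{S}y,y\rangle\le (C_b^2/\alpha)\|y\|_Y^2$. For the lower bound I will use the LBB condition, which gives $\sup_x\langle\mathbf{B}x,y\rangle/\|x\|_X\ge\beta\|y\|_Y$, and hence $\langle\mathbf{S}y,y\rangle\ge(\beta^2/C_a)\|y\|_Y^2$. Combining the two bounds, the spectrum of $\tilde{\mathbf{S}}^{-1}\mathbf{S}$ lies in $[\beta^2/C_a,\,C_b^2/\alpha]$, so
\begin{equation*}
\kappa(\tilde{\mathbf{S}}^{-1}\mathbf{S})\le\frac{C_aC_b^2}{\alpha\beta^2}.
\end{equation*}
This constant involves only $\alpha$, $C_a$, $C_b$ and $\beta$.

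The main obstacle is that the LBB condition in Theorem~\ref{lem:coercivity_continuity} is written with the infimum and supremum over the continuous spaces $Q,\boldsymbol{\Lambda},V$, whereas the Schur complement argument requires the discrete inf-sup condition over $Q_h\times\boldsymbol{\Lambda}_h$ and $V_h$. I will read the theorem as holding for the discrete spaces, which is consistent with its coercivity and continuity parts and with the preceding corollary on discrete well-posedness. If that reading is not available, the discrete inf-sup has to be justified separately. The pressure part would come from the Taylor--Hood inf-sup condition in the $\tilde S$-norm on $\Omega_f^t$. The multiplier part is simpler: for given $\boldsymbol{\mu}_h$, take $\bu_h$ to be an extension of $\boldsymbol{\mu}_h$ from the fitted solid region and set $\bU=\boldsymbol{\omega}=0$. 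This choice uses $\boldsymbol{\Lambda}_h$ being the trace of $V_h$ on $\Omega_s^t$, which holds because the mesh is fitted.
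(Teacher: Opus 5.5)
Your argument is correct and is essentially the paper's own. The paper simply invokes Theorem~\ref{lem:coercivity_continuity} together with the Schur-complement analysis of \cite{loghin2004analysis}, which is exactly the Rayleigh-quotient bound $\beta^2/C_a \le \langle\mathbf{S}y,y\rangle/\langle\tilde{\mathbf{S}}y,y\rangle \le C_b^2/\alpha$ that you write out explicitly to obtain $C_aC_b^2/(\alpha\beta^2)$. Your reading of the inf-sup condition as a discrete one is the intended one, since the paper proves it over $Q_h\times\boldsymbol{\Lambda}_h$ and $V_h$ separately in Lemma~\ref{thm:lbb_discrete}.
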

The proof of this corollary follows directly from Theorem \ref{lem:coercivity_continuity} and the analysis in \cite{loghin2004analysis}.

\subsection{Proof of the main theorem}
First let us prove the coercivity and continuity conditions for the velocity space.
\begin{lemma}
    The coercivity condition and the continuity condition hold, i.e., there exist positive constants $\alpha$ and $C$ independent of $\Reynolds$, $\gamma$ and $\Delta t$ such that for all $\bu\in V$, $\bU\in T_M$, $\bomega\in R_M$, ${q}\in Q$ and ${\boldsymbol{\mu}}\in\boldsymbol{\Lambda}$, the following estimates hold
    \begin{equation}
        \begin{aligned}
            &\frac{\Reynolds}{\gamma\Delta t}{m}^t(\bu,\bu) + {a}^t(\bu,\bu) +  {c}^t(\bu-{\bU} - {\bomega}\times\mathbf{r},\bu-{\bU} - {\bomega}\times\mathbf{r})\geq \alpha (\|\bu\|^2_V  + \|\bU\|_{T_M}^2 + \|\bomega\|_{R_M}^2),\\
            &\frac{\Reynolds}{\gamma\Delta t}{m}^t(\bu,\bu) + {a}^t(\bu,\bu) +  {c}^t(\bu-{\bU} - {\bomega}\times\mathbf{r},\bu-{\bU} - {\bomega}\times\mathbf{r})\leq C_a (\|\bu\|^2_V  + \|\bU\|_{T_M}^2 + \|\bomega\|_{R_M}^2).
        \end{aligned}
    \end{equation}
\end{lemma}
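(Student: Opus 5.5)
Write $\kappa = \Reynolds/(\gamma\Delta t)$, so that $c^t(\boldsymbol{\lambda},\boldsymbol{\mu}) = \kappa(\boldsymbol{\lambda},\boldsymbol{\mu})_{\Omega_s^t} + 2(\varepsilon(\boldsymbol{\lambda}),\varepsilon(\boldsymbol{\mu}))_{\Omega_s^t}$. Then $\kappa m^t(\bu,\bu)+a^t(\bu,\bu) = \|\bu\|_V^2$ exactly. The left-hand side of both estimates is therefore
\begin{equation*}
\mathcal{E}(\bu,\bU,\bomega) := \|\bu\|_V^2 + \|\bu - \bU - \bomega\times\br\|_{c}^2, \qquad \|\cdot\|_c^2 := c^t(\cdot,\cdot).
\end{equation*}
The plan is to compare the rigid part $\bU + \bomega\times\br$ with $\bu$ restricted to $\Omega_s^t$, using only the triangle inequality and the fact that $c^t$ on $\Omega_s^t$ is dominated by the $V$-inner product on $\Omega$. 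No Korn or Poincaré inequality on the solid should be needed, which is what makes the constants independent of $\Reynolds$, $\gamma$ and $\Delta t$.

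\textbf{Continuity.} The first step is the bound $\|\bu|_{\Omega_s^t}\|_c \le \|\bu\|_V$, which holds trivially since the integrands are nonnegative and $\Omega_s^t\subset\Omega$. The second step is the orthogonality of translations and rotations in $c^t$. We have $\varepsilon(\bU)=\varepsilon(\bomega\times\br)=0$, and $(\bU,\bomega\times\br)_{\Omega_s^t} = \bU\cdot\big(\bomega\times\int_{\Omega_s^t}\br\,d\bx\big)=0$ because $\bx_c$ is the centroid of the homogeneous body. Hence
\begin{equation*}
\|\bU+\bomega\times\br\|_c^2 = \|\bU\|_{T_M}^2+\|\bomega\|_{R_M}^2 .
\end{equation*}
The triangle inequality then gives $\mathcal{E} \le \|\bu\|_V^2 + 2\|\bu\|_V^2 + 2(\|\bU\|_{T_M}^2+\|\bomega\|_{R_M}^2)$, so $C_a = 3$ works.

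\textbf{Coercivity.} Write $\bU+\bomega\times\br = \bu - (\bu - \bU-\bomega\times\br)$ on $\Omega_s^t$ and apply the orthogonality identity:
\begin{equation*}
\|\bU\|_{T_M}^2+\|\bomega\|_{R_M}^2 = \|\bU+\bomega\times\br\|_c^2 \le 2\|\bu\|_c^2 + 2\|\bu-\bU-\bomega\times\br\|_c^2 \le 2\mathcal{E}.
\end{equation*}
Combined with $\|\bu\|_V^2\le\mathcal{E}$, this yields $\|\bu\|_V^2+\|\bU\|_{T_M}^2+\|\bomega\|_{R_M}^2 \le 3\mathcal{E}$, i.e. coercivity with $\alpha = 1/3$.

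\textbf{Main point to verify.} The only delicate step is the orthogonality $(\bU,\bomega\times\br)_{\Omega_s^t}=0$. It relies on $\bx_c$ being the centroid of $\Omega_s^t$, which holds for a homogeneous body. In the discrete isoparametric setting the approximate domain may shift the centroid slightly. If exact orthogonality fails, I would fall back on a strengthened Cauchy–Schwarz inequality: on the finite-dimensional space of rigid motions, $\|\bU+\bomega\times\br\|_c^2 \ge \theta(\|\bU\|_{T_M}^2+\|\bomega\|_{R_M}^2)$ for some $\theta>0$. The remaining task would then be to argue that $\theta$ is uniform in $\kappa$. In both regimes the $L^2$ term dominates the $c$-norm of rigid motions, so the angle between translations and rotations is controlled purely geometrically by the offset of the centroid, independent of $\kappa$.
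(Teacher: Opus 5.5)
Your proof is correct and follows essentially the same route as the paper. Both arguments rely on three ingredients: the $c^t$-orthogonality of translations and rotations (so $\|\bU+\bomega\times\br\|_c^2=\|\bU\|_{T_M}^2+\|\bomega\|_{R_M}^2$), the domination $c^t(\bu,\bu)\le\|\bu\|_V^2$, and a Young-type splitting. Both arrive at $\alpha=1/3$ and $C_a=3$.

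Your decomposition $\bU+\bomega\times\br=\bu-(\bu-\bU-\bomega\times\br)$ is only a tidier rearrangement of the paper's weighted Cauchy--Schwarz step. You also correctly make explicit that the orthogonality requires $\bx_c$ to be the centroid of $\Omega_s^t$, which the paper uses without comment.
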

\begin{proof}
    Notice that for translational and rotational motion are orthogonal with respect to the inner product $c^t(\cdot,\cdot)$, we have
\begin{equation}
    \begin{aligned}
        {c}^t(\bu-{\bU} - {\bomega}\times\mathbf{r},\bu-{\bU} - {\bomega}\times\mathbf{r}) &= {c}^t(\bu,\bu) + {c}^t({\bU},{\bU}) + {c}^t({\bomega}\times\mathbf{r},{\bomega}\times\mathbf{r}) - 2{c}^t(\bu,{\bU} + {\bomega}\times\mathbf{r})\\
        &\geq -\frac{1}{2}\|\bu\|_V^2 + \frac13\|{\bU}\|_{T_M}^2 + \frac{1}{3}\|{\bomega}\|_{R_M}^2,
    \end{aligned}
\end{equation}
where we used the Cauchy-Schwarz inequality
\begin{equation}
    2{c}^t(\bu,{\bU} + {\bomega}\times\mathbf{r})\leq\frac32 {c}^t(\bu,\bu) + \frac23 {c}^t({\bU} + {\bomega}\times\mathbf{r},{\bU} + {\bomega}\times\mathbf{r}).
\end{equation}
Thus we have the coercivity condition
\begin{equation}
    \begin{aligned}
        &\frac{\Reynolds}{\gamma\Delta t}{m}^t(\bu,\bu) + {a}^t(\bu,\bu) +  {c}^t(\bu-{\bU} - {\bomega}\times\mathbf{r},\bu-{\bU} - {\bomega}\times\mathbf{r})\geq \frac13 (\|\bu\|^2_V  + \|\bU\|_{T_M}^2 + \|\bomega\|_{R_M}^2).
    \end{aligned}
\end{equation}
The first continuity condition is straightforward to verify. We have
\begin{equation}
    \begin{aligned}
        \frac{\Reynolds}{\gamma\Delta t}{m}^t(\bu,\bu) + {a}^t(\bu,\bu) +  {c}^t(\bu-{\bU} - {\bomega}\times\mathbf{r},\bu-{\bU} - {\bomega}\times\mathbf{r})
        &\leq 3\|\bu\|^2_V + 2\|\bU\|_{T_M}^2 + 2\|\bomega\|_{R_M}^2 \\
        &\leq 3 \left( \|\bu\|^2_V + \|\bU\|_{T_M}^2 + \|\bomega\|_{R_M}^2 \right).
    \end{aligned}
\end{equation}
\end{proof}
The continuity condition for the bilinear forms $b^t(\cdot,\cdot)$ and $c^t(\cdot,\cdot)$ are more involved and requires additional analysis.
\begin{lemma}
    The continuity condition holds for the bilinear forms $b^t(\cdot,\cdot)$ and $c^t(\cdot,\cdot)$, i.e., there exists a positive constant $C$ independent of $\Reynolds$, $\gamma$ and $\Delta t$ such that for all $\bu\in V$, ${q}\in Q$, ${\boldsymbol{\mu}}\in\boldsymbol{\Lambda}$, the following estimate holds
    \begin{equation}
        {b}^t({q},\bu)+{c}^t({\boldsymbol{\mu}},\bu - {\bU} - {\bomega}\times\mathbf{r})\leq C(\|{q}\|_Q^2+\|{\boldsymbol{\mu}}\|_{\boldsymbol{\Lambda}}^2)^\frac12 (\|\bu\|_V^2 + \|{\bU}\|_{T_M}^2 + \|{\bomega}\|_{R_M}^2)^\frac12.
    \end{equation}
\end{lemma}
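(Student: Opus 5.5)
We split the left-hand side into the pressure term $b^t(q,\bu)$ and the multiplier term $c^t(\boldsymbol{\mu},\bu-\bU-\bomega\times\mathbf{r})$. We bound each by the product of the corresponding norms, and then combine the two bounds with the discrete Cauchy--Schwarz inequality $xa+yb\le (x^2+y^2)^{1/2}(a^2+b^2)^{1/2}$.

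\emph{Multiplier term.} This term is the easy one. Since $c^t$ is an inner product on $[H^1(\Omega_s^t)]^d$ and $\|\cdot\|_{\boldsymbol{\Lambda}}$ is its induced norm, Cauchy--Schwarz gives
\[
c^t(\boldsymbol{\mu},\bu-\bU-\bomega\times\mathbf{r})\le \|\boldsymbol{\mu}\|_{\boldsymbol{\Lambda}}\, c^t(\bu-\bU-\bomega\times\mathbf{r},\bu-\bU-\bomega\times\mathbf{r})^{1/2}.
\]
Expanding the last factor with the triangle inequality, we get $c^t(\bu,\bu)^{1/2}+\|\bU\|_{T_M}+\|\bomega\|_{R_M}$. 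The weights in $c^t$ are $\Reynolds/(\gamma\Delta t)$ and $2$, which are exactly those of $(\cdot,\cdot)_V$, so $c^t(\bu,\bu)\le\|\bu\|_V^2$ because $\Omega_s^t\subset\Omega$. Altogether this term is bounded by $\sqrt3\,\|\boldsymbol{\mu}\|_{\boldsymbol{\Lambda}}(\|\bu\|_V^2+\|\bU\|_{T_M}^2+\|\bomega\|_{R_M}^2)^{1/2}$, with no parameter dependence.

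\emph{Pressure term.} This is the main obstacle, because we need $|b^t(q,\bu)|\le C\|q\|_Q\|\bu\|_V$ with $C$ independent of $\epsilon:=\Reynolds/(\gamma\Delta t)$, where $\|q\|_Q^2=\langle\tilde S q,q\rangle$ and $\tilde S=(M^{-1}+\epsilon A^{-1})^{-1}$. I would argue by duality, $|b^t(q,\bu)|\le\|q\|_Q\,\|\mathrm{div}\,\bu\|_{Q'}$ with $\|g\|_{Q'}^2=\langle g,\tilde S^{-1}g\rangle=\|g\|_{M^{-1}}^2+\epsilon\|g\|_{A^{-1}}^2$. Take $g=\nabla\cdot\bu|_{\Omega_f^t}$. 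The first contribution satisfies $\|\nabla\cdot\bu\|_{L^2(\Omega_f^t)}^2\le d\,\|\nabla\bu\|^2\lesssim\|\varepsilon(\bu)\|^2$ by Korn's inequality on $H^1_0$. For the second contribution, $\|\nabla\cdot\bu\|_{A^{-1}}$ is the $H^1$-dual norm of $\nabla\cdot\bu$ (modulo constants), i.e.
\[
\sup_q (\nabla\cdot\bu,q)/\|\nabla q\|_{\Omega_f^t}.
\]
Integrating by parts, $(\nabla\cdot\bu,q)_{\Omega_f^t}=-(\bu,\nabla q)_{\Omega_f^t}+\int_{\Gamma_I^t}(\bu\cdot\mathbf{n})\,q\,ds$. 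The volume part gives $\|\bu\|_{L^2}$, hence $\epsilon\|\bu\|_{L^2}^2\le\|\bu\|_V^2$. The interface integral is the delicate point, since $\bu$ does not vanish on $\Gamma_I^t$. I would control it with a trace inequality applied to $q-\bar q$ (recall that $q$ has zero mean on $\Omega_f^t$, so Poincar\'e applies) and a trace bound for $\bu$ from the solid side. This yields a bound by $\|\bu\|_{H^1(\Omega_s^t)}\|\nabla q\|$; however, multiplied by $\epsilon$, it would only be $\epsilon$-robust if $\epsilon\le C$.

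To remove this restriction I would instead take the supremum of the $A^{-1}$-part only over the discrete/Neumann setting used in the Cahouet--Chabard analysis. Concretely, I would use the standard interpolation argument: $\|g\|_{\tilde S^{-1}}^2\le\min\{\ldots\}$ is dominated by $\|g\|_{L^2}^2+\epsilon\|g\|_{H^{-1}}^2$. I would then also bound the $H^{-1}$-part of $\nabla\cdot\bu$ by $\|\bu\|_{L^2(\Omega)}$ plus a solid-side contribution, and absorb the latter using $\epsilon\|\bu\|^2_{L^2(\Omega_s^t)}\le c^t(\bu,\bu)$. If the interface term resists this, I would fall back on proving the bound with a constant depending on $\Omega_f^t$ and the shape of $\Omega_s^t$ (through Korn, trace and Poincar\'e constants), which is still independent of $\Reynolds,\gamma,\Delta t$.

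Finally, adding the two bounds and applying the discrete Cauchy--Schwarz inequality gives the claim with $C=\max(C_1,\sqrt3)$, where $C_1$ is the constant from the pressure term.
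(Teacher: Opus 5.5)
Your split and your bound for the multiplier term are correct, and they coincide with the paper's argument: Cauchy--Schwarz for $c^t$, the triangle inequality, and $c^t(\bu,\bu)\le\|\bu\|_V^2$. (The final constant should read $\sqrt2\max(C_1,\sqrt3)$.) Your duality reduction for the pressure term is also sound and equivalent to the paper's reduction to $S=BT_\bv^{-1}B^*\le C\tilde S$. With $\epsilon=\Reynolds/(\gamma\Delta t)$ and $\|g\|_{Q'}^2=\langle M^{-1}g,g\rangle+\epsilon\langle A^{-1}g,g\rangle$, you need $\epsilon\langle A^{-1}\nabla\cdot\bu,\nabla\cdot\bu\rangle\le C\|\bu\|_V^2$. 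The genuine gap is here, at the interface integral you flag, and none of your three remedies closes it:
\begin{itemize}
\item \emph{Trace estimate.} It leaves $\epsilon\|\bu\|_{H^1(\Omega_s^t)}^2$, as you observe yourself. So your fallback claim, a constant depending only on Korn, trace and Poincar\'e constants, contradicts your own computation.
\item \emph{Interpolation step.} It goes the wrong way. $\langle A^{-1}g,g\rangle^{1/2}$ is the dual norm of $H^1(\Omega_f^t)\cap L^2_0(\Omega_f^t)$, which \emph{dominates} the $H^{-1}(\Omega_f^t)$ norm. The extra test functions, those not vanishing on $\Gamma_I^t$, are exactly what produce the interface integral.
\item \emph{Absorption.} A trace of $\bu$ on $\Gamma_I^t$ cannot be absorbed by $\epsilon\|\bu\|_{L^2(\Omega_s^t)}^2\le c^t(\bu,\bu)$.
\end{itemize}

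In fact no argument can close this step, because the estimate fails uniformly in $\epsilon$. Take $\boldsymbol{\mu}=0$, $\bU=\bomega=0$, and a fixed smooth zero-mean $q$ with $q|_{\Gamma_I^t}\neq0$.
\begin{itemize}
\item Since $\tilde S^{-1}\ge\epsilon A^{-1}$, we get $\|q\|_Q^2\le\epsilon^{-1}\|\nabla q\|_{L^2(\Omega_f^t)}^2$.
\item Let $\bu_\delta\in V$ be a layer of width $\delta=\epsilon^{-1/2}$ around $\Gamma_I^t$ with $\bu_\delta\cdot\mathbf{n}=q$ on $\Gamma_I^t$. Then $\|\bu_\delta\|_V^2\le C(\epsilon\delta+\delta^{-1})\le C\epsilon^{1/2}$.
\item Integrating by parts, $|b^t(q,\bu_\delta)|\ge\|q\|_{L^2(\Gamma_I^t)}^2-C\delta$.
\end{itemize}
Hence $|b^t(q,\bu_\delta)|/(\|q\|_Q\|\bu_\delta\|_V)\ge c\,\epsilon^{1/4}$ for large $\epsilon$, i.e.\ as $\Delta t\to0$.

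The paper's proof does not supply the missing idea either. To bound $\epsilon\langle A^{-1}Sq,Sq\rangle$ it writes $\langle T_\bv^{-1}B^*q,B^*p\rangle=(T_\bv^{-1}B^*q,\nabla p)$. That is integration by parts on $\Omega_f^t$ with the term $\int_{\Gamma_I^t}p\,(T_\bv^{-1}B^*q)\cdot\mathbf{n}\,ds$ silently dropped. This term is nonzero in general, since $T_\bv^{-1}B^*q\in[H^1_0(\Omega)]^d$ does not vanish on $\Gamma_I^t$.

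What your tools do prove is the bound with an $\epsilon$-dependent constant:
\begin{itemize}
\item Poincar\'e on $L^2_0(\Omega_f^t)$ gives $A\ge C_P^{-1}M$, hence $\tilde S\ge(1+C_P\epsilon)^{-1}M$.
\item Combined with $\|\nabla\cdot\bu\|_{L^2(\Omega_f^t)}\le\sqrt d\,\|\bu\|_V$, this yields $|b^t(q,\bu)|\le\sqrt d\,(1+C_P\epsilon)^{1/2}\|q\|_Q\|\bu\|_V$.
\end{itemize}
Parameter robustness would require either restricting to $\epsilon\le C$ or replacing $\tilde S$ by a pressure norm that accounts for the interface.
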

\begin{proof}
    The continuity condition for the bilinear form $c^t(\boldsymbol{\mu},\bu - \bU - \bomega\times\mathbf{r})$ is  straightforward. Indeed,
\begin{equation}
    \begin{aligned}
        {c}^t({\boldsymbol{\mu}},\bu - {\bU} - {\bomega}\times\mathbf{r})
        &\leq \|{\boldsymbol{\mu}}\|_{\boldsymbol{\Lambda}} \left( \|\bu\|_V + \|\bU\|_{T_M} + \|\bomega\|_{R_M} \right).
    \end{aligned}
\end{equation}
However, the continuity condition for the bilinear form $b^t(q,\bu)$ requires more detailed analysis. 
We define the linear operators $B: V \to Q'$, $M_\bv: V \to V'$, and $A_\bv: V \to V'$ by
\begin{equation}
    \begin{aligned}
        &\langle B\bu, q \rangle = b^t(q, \bu), \\
        &\langle A_\bv\bu, \bv \rangle = a^t(\bu, \bv), \\
        &\langle M_\bv\bu, \bv \rangle = m^t(\bu, \bv),
    \end{aligned}
\end{equation}
and let $B^*: Q \to V'$ be the adjoint of $B$, and $T_\bv = \frac{\Reynolds}{\gamma\Delta t} M_\bv + A_\bv$. Since $A_\bv$ and $M_\bv$ are self-adjoint and positive definite, we obtain the identity
\begin{equation}
    \begin{aligned}
    \left( \sup_{\bu \in V} \frac{b^t(q, \bu)}{\|\bu\|_V} \right)^2 
    &= \sup_{\bu \in V} \frac{\langle B^* q, \bu \rangle^2}{\langle T_\bv \bu, \bu \rangle} 
     = \sup_{\bu \in V} \frac{\langle T_\bv^{-1} B^* q, T_\bv \bu \rangle^2}{\langle T_\bv^{-1} T_\bv \bu, T_\bv \bu \rangle} 
     = \langle T_\bv^{-1} B^* q, B^* q \rangle 
     = \langle B T_\bv^{-1} B^* q, q \rangle.
    \end{aligned}
\end{equation}
Denoting $S = B T_\bv^{-1} B^*$, it suffices to show that
\begin{equation}
    \langle S q, q \rangle \leq C \langle \tilde{S} q, q \rangle = C \|q\|_Q^2
\end{equation}
for all $q \in Q$. This is equivalent to
\begin{equation}
    \langle S \tilde{S}^{-1} q', \tilde{S}^{-1} q' \rangle \leq C \langle \tilde{S}^{-1} q', q' \rangle
\end{equation}
for all $q' \in Q'$. It therefore suffices to show that the spectral radius of $S \tilde{S}^{-1}$ is bounded by $C$.
It is easy to show that $S \tilde{S}^{-1}$ and $\tilde{S}^{-1} S$ have the same spectrum.
As the closure of the numerical range of a self-adjoint operator with respect to an inner product defined by $\langle \cdot, S\cdot \rangle$ is the convex hull of its spectrum, it suffices to show that
\begin{equation}
    \langle \tilde{S}^{-1} S q, S q \rangle \leq C \langle S q, q \rangle
\end{equation}
for all $q \in Q$. By the definition of $\tilde{S}$ in \eqref{eq:tildeS}, we have
\begin{equation}
    \begin{aligned}
        \langle \tilde{S}^{-1} S q, S q \rangle = \frac{\Reynolds}{\gamma\Delta t} \langle A^{-1} S q, S q \rangle + \langle M^{-1} S q, S q \rangle.
    \end{aligned}
\end{equation}
For the first term, we have
\begin{equation}
    \begin{aligned}
        \frac{\Reynolds}{\gamma\Delta t} \langle A^{-1} S q, S q \rangle 
        &= \frac{\Reynolds}{\gamma\Delta t} \sup_{p \in Q} \frac{\langle S q, p \rangle^2}{\langle A p, p \rangle} 
         = \sup_{p \in Q} \frac{\langle T_\bv^{-1} B^* q, B^* p \rangle^2}{(\nabla p, \nabla p)_{\Omega_f^t}} 
         = \frac{\Reynolds}{\gamma\Delta t} \sup_{p \in Q} \frac{(T_\bv^{-1} B^* q, \nabla p)^2}{(\nabla p, \nabla p)_{\Omega_f^t}} \\
        &= \frac{\Reynolds}{\gamma\Delta t} \|T_\bv^{-1} B^* q\|_{L^2(\Omega_f^t)}^2 
         = \left\langle \frac{\Reynolds}{\gamma\Delta t} M_\bv T_\bv^{-1} B^* q, T_\bv^{-1} B^* q \right\rangle \\
        &\leq \left\langle \left( \frac{\Reynolds}{\gamma\Delta t} M_\bv + A_\bv \right) T_\bv^{-1} B^* q, T_\bv^{-1} B^* q \right\rangle 
         = \langle S q, q \rangle.
    \end{aligned}
\end{equation}
For the second term, we claim that
\begin{equation}
    \begin{aligned}
        \langle M^{-1} S q, S q \rangle \leq \langle S q, q \rangle.
    \end{aligned}
\end{equation}
To prove this, we again apply the argument on the spectral radius of a self-adjoint operator, so it suffices to show that
\begin{equation}
    \langle M^{-1} S q, M q \rangle \leq \langle M q, q \rangle,
\end{equation}
i.e.,
\begin{equation} 
    \langle S q, q \rangle \leq \langle M q, q \rangle.
\end{equation}
We have
\begin{equation}
    \begin{aligned}
        \langle S q, q \rangle 
        &= \langle T_\bv^{-1} B^* q, B^* q \rangle 
         = \langle T_\bv^{-1} B^* q, T_\bv T_\bv^{-1} B^* q \rangle \\
        &= \sup_{\bu \in V} \frac{\langle T_\bv^{-1} B^* q, T_\bv \bu \rangle^2}{\langle T_\bv \bu, \bu \rangle} 
         \leq C \sup_{\bu \in V} \frac{\langle B \bu, q \rangle^2}{\|\bu\|_{H^1(\Omega_f^t)}^2} \\
        &\leq C \langle M q, q \rangle,
    \end{aligned} 
\end{equation}
where we used $\|\nabla \cdot \bu| \leq C \|\bu\|_{H^1(\Omega_f^t)}$ and Korn's inequality. Combining both terms, we obtain
\begin{equation}
    \langle \tilde{S}^{-1} S q, S q \rangle \leq C \langle S q, q \rangle.
\end{equation}
This implies that the spectral radius of $S \tilde{S}^{-1}$ is bounded by $C$, and hence
\begin{equation}
    \sup_{\bu \in V} \frac{b^t(q, \bu)}{\|\bu\|_V} \leq C \|q\|_Q.
\end{equation}
Thus, the continuity condition for the bilinear form $b^t(q, \bu)$ is established.
\end{proof}
\begin{remark}
    The conclusion above holds for the discretized system \eqref{eq:linearised_discretized} as well, since the finite element spaces $V_h$, $Q_h$, $T_M$, $R_M$ and $\boldsymbol{\Lambda}_h$ are all subspaces of the continuous spaces $V$, $Q$, $T_M$, $R_M$ and $\boldsymbol{\Lambda}$, respectively. The bilinear forms are also the same as the continuous case. Thus we have the same coercivity and continuity conditions for the discretized system \eqref{eq:linearised_discretized}.
\end{remark}

The proof of the LBB condition requires some auxiliary results for the Stokes equation.
First we make the following regularity assumption for Stokes equation.
\begin{assumption}\label{assumption:regularity}
    We assume that if $(\bu,p)$ is the solution of the Stokes equation
    \begin{equation}\label{eq:stokes_aux1}
        \begin{aligned}
        -\Delta \bu+ \nabla p &= \mathbf{f},&\text{in }\Omega_f^t,\\
        \nabla \cdot \bu &= 0, &\text{in }\Omega_f^t,\\
        \bu &= 0, &\text{on }\partial\Omega_f^t,\\
        \int_{\Omega_f^t} p \ dx &= 0,
    \end{aligned}
    \end{equation}
    then 
    \begin{equation}
        \|\bu\|_{H^2(\Omega)} + \|\nabla p\|_{L^2(\Omega)}\leqslant C \|\mathbf{f}\|_{L^2(\Omega)}.
    \end{equation}
\end{assumption}
This gives the following result which we will use in the analysis of the LBB condition.
\begin{lemma}\label{lemma:stokes_estimate}
    Let $(\bu,p)$ be the solution of the Stokes equation
    \begin{equation}\label{eq:stokes_aux2}
    \begin{aligned}
        -\Delta \bu+ \nabla p &= 0,&\text{in }\Omega_f^t,\\
        \nabla \cdot \bu &= g, &\text{in }\Omega_f^t,\\
        \bu &= 0, &\text{on }\partial\Omega_f^t,\\
        \int_{\Omega_f^t} p \ dx &= 0,
    \end{aligned}
    \end{equation}
    with $g \in L_0^2(\Omega_f^t)$. Under Assumption~\ref{assumption:regularity}, we have 
    \begin{equation}
        \|\bu\|_{L^2(\Omega_f^t)} \leq C \|g\|_{H^{-1}(\Omega_f^t)}.
    \end{equation}
\end{lemma}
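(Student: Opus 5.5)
The estimate follows from a duality (Aubin--Nitsche type) argument based on Assumption~\ref{assumption:regularity}. By definition of the dual norm,
\begin{equation*}
\|\bu\|_{L^2(\Omega_f^t)} = \sup_{\mathbf{f}\in [L^2(\Omega_f^t)]^d,\ \mathbf{f}\neq 0} \frac{(\bu,\mathbf{f})_{\Omega_f^t}}{\|\mathbf{f}\|_{L^2(\Omega_f^t)}},
\end{equation*}
so it suffices to bound $(\bu,\mathbf{f})$ for an arbitrary $\mathbf{f}\in [L^2(\Omega_f^t)]^d$. For such $\mathbf{f}$, let $(\mathbf{w},r)$ be the solution of the auxiliary Stokes problem \eqref{eq:stokes_aux1} with right-hand side $\mathbf{f}$. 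Assumption~\ref{assumption:regularity} gives $\mathbf{w}\in H^2$ and $\nabla r\in L^2$ with $\|\mathbf{w}\|_{H^2}+\|\nabla r\|_{L^2}\le C\|\mathbf{f}\|_{L^2}$. In particular $r\in H^1(\Omega_f^t)$ with zero mean, so by Poincar\'e's inequality $\|r\|_{H^1}\le C\|\mathbf{f}\|_{L^2}$.

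The key step is the duality identity. We test the dual momentum equation with $\bu$ and integrate by parts. Boundary terms vanish because $\bu=0$ on $\partial\Omega_f^t$. This gives
\begin{equation*}
(\mathbf{f},\bu) = (\nabla\mathbf{w},\nabla\bu) - (r,\nabla\cdot\bu) = (\nabla\mathbf{w},\nabla\bu) - (r,g).
\end{equation*}
Next we use the primal problem \eqref{eq:stokes_aux2} tested with $\mathbf{w}\in H^1_0(\Omega_f^t)$, namely $(\nabla\bu,\nabla\mathbf{w}) - (p,\nabla\cdot\mathbf{w}) = 0$. Since $\nabla\cdot\mathbf{w}=0$, this yields $(\nabla\bu,\nabla\mathbf{w})=0$. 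Hence $(\mathbf{f},\bu) = -(r,g)$.

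It remains to estimate $(r,g)$ by the $H^{-1}$ norm of $g$. Because $g$ has zero mean, we may replace $r$ by $r$ minus any constant. We then have $|(r,g)|\le \|g\|_{H^{-1}}\|r\|_{H^1}$. This uses the $H^{-1}$ norm as the dual of $H^1$; if the dual of $H^1_0$ is intended, we first interpret $\|g\|_{H^{-1}}$ for mean-zero $g$ appropriately, which is the standard convention for this lemma. Combining with the regularity bound gives $|(\mathbf{f},\bu)|\le C\|g\|_{H^{-1}}\|\mathbf{f}\|_{L^2}$. Taking the supremum over $\mathbf{f}$ completes the proof.

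The main obstacle is the last pairing. The dual pressure $r$ need not vanish on $\partial\Omega_f^t$, so pairing it with $g$ requires an $H^{-1}$ norm dual to $H^1(\Omega_f^t)$ (modulo constants), not to $H^1_0$. I would state and use this convention explicitly. The well-posedness of \eqref{eq:stokes_aux2} for $g\in L^2_0$ is standard from the inf-sup condition of the Stokes problem and I would simply cite it. The integrations by parts are justified because $\mathbf{w}\in H^2$ and $r\in H^1$.
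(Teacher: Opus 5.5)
Your argument is correct and is the same duality argument as the paper's: the paper takes the dual right-hand side to be $\bu$ itself instead of taking a supremum over $\mathbf{f}$, and it reaches the same identity $\|\bu\|_{L^2(\Omega_f^t)}^2=-(\theta,g)$, where $\theta$ is the dual pressure. Your remark that $\|g\|_{H^{-1}}$ must be the dual norm of $H^1(\Omega_f^t)$ modulo constants is used only implicitly in the paper, which later evaluates this norm as a supremum over $p\in Q$ against $|p|_{H^1}$. The point is genuinely necessary: for $\bu\in H^1_0$ one has $(g,\psi)=-(\bu,\nabla\psi)$ for every smooth $\psi$, so a boundary-concentrated $g$ with small $(H^1_0)'$ norm can still force $\|\bu\|_{L^2}$ to be of order one.
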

\begin{proof}
    Let $(\Psi, \theta)$ be the solution of the dual problem:
    \begin{equation}
        -\Delta \Psi + \nabla \theta = \bu, \quad \nabla \cdot \Psi = 0, \quad \Psi|_{\partial\Omega_f^t} = 0, \quad \int \theta = 0.
    \end{equation}
    By Assumption~\ref{assumption:regularity}, we have
    \begin{equation}
        \|\Psi\|_{H^2(\Omega_f^t)} + \|\nabla \theta\|_{L^2(\Omega_f^t)} \leq C \|\bu\|_{L^2(\Omega_f^t)}.
    \end{equation}
    Testing the equation for $\bu$ with $\Psi$ and using $\nabla \cdot \Psi = 0$, we obtain
    \begin{equation}
        \int_{\Omega_f^t} \nabla \bu : \nabla \Psi \, dx = 0.
    \end{equation}
    Testing the dual equation with $\bu$, we get
    \begin{equation}
        \|\bu\|_{L^2}^2 = \int_{\Omega_f^t} \nabla \Psi : \nabla \bu \, dx - \int_{\Omega_f^t} \theta g \, dx = - \int_{\Omega_f^t} \theta g \, dx.
    \end{equation}
    Thus,
    \begin{equation}
        \|\bu\|_{L^2}^2 \leq \|\theta\|_{H^1} \|g\|_{H^{-1}} \leq C \|\bu\|_{L^2} \|g\|_{H^{-1}},
    \end{equation}
    which implies the desired estimate.
\end{proof}
We also recall the standard inf-sup condition for the Stokes equation.
\begin{lemma}\label{lemma:infsup_stationary_stokes}
    There exists a constant $C$ satisfying 
    \begin{equation}\label{eq:infsup}
        \|p\|_{L^2_0(\Omega_f^t)}\leq C\sup_{\bu\in H_0^1(\Omega_f^t)}\frac{(\nabla\cdot\bu,p)_{\Omega_f^t}}{\|\bu\|_{H^1(\Omega_f^t)}}
    \end{equation}
\end{lemma}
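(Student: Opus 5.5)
The statement to prove is Lemma~\ref{lemma:infsup_stationary_stokes}, the classical continuous inf-sup condition for the divergence on the Lipschitz domain $\Omega_f^t$. The plan is the standard Bogovski\u{\i}/Ne\v{c}as argument: for each $p\in L^2_0(\Omega_f^t)$, construct a velocity $\bu_p\in H_0^1(\Omega_f^t)$ with $\nabla\cdot\bu_p = p$ and $\|\bu_p\|_{H^1(\Omega_f^t)}\le C\|p\|_{L^2(\Omega_f^t)}$. Then
\[
\sup_{\bu\in H_0^1(\Omega_f^t)}\frac{(\nabla\cdot\bu,p)_{\Omega_f^t}}{\|\bu\|_{H^1(\Omega_f^t)}}\ \ge\ \frac{(p,p)_{\Omega_f^t}}{\|\bu_p\|_{H^1(\Omega_f^t)}}\ \ge\ \frac1C\|p\|_{L^2(\Omega_f^t)},
\]
which is \eqref{eq:infsup}.

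To construct $\bu_p$, I would first use that $\Omega_f^t=\Omega\setminus\overline{\Omega_s^t}$ is a bounded connected Lipschitz domain. Here $\Omega$ has a Lipschitz (or smoother) boundary, the solid is strictly interior and does not touch $\partial\Omega$, and the ALE map $\mathcal{A}^t$ is a diffeomorphism, so Lipschitz regularity is inherited from $\Omega_f^0$. Such a domain can be covered by finitely many open sets $\mathcal{O}_1,\dots,\mathcal{O}_N$, each star-shaped with respect to a ball. On each star-shaped piece, Bogovski\u{\i}'s integral operator gives a right inverse of the divergence from $L^2_0(\mathcal{O}_i)$ into $H_0^1(\mathcal{O}_i)$, bounded in terms of the ratio of the diameter to the radius of the ball.

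Next, I would decompose $p=\sum_i p_i$ with $p_i\in L^2_0(\mathcal{O}_i)$ and $\|p_i\|\le C\|p\|$, using a partition of unity and correcting the local means along a chain of overlapping sets; connectedness of $\Omega_f^t$ is what makes this chain exist. Setting $\bu_p=\sum_i \mathcal{B}_i p_i$, with each term extended by zero, gives the required field.

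As an alternative route, I would cite Ne\v{c}as' inequality $\|p\|_{L^2}\le C(\|\nabla p\|_{H^{-1}}+\|p\|_{H^{-1}})$, combine it with the compactness of $L^2\hookrightarrow H^{-1}$ and a contradiction argument to remove the lower-order term on $L^2_0$, and then identify $\|\nabla p\|_{H^{-1}}$ with the supremum in \eqref{eq:infsup}.

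The main obstacle is the domain-dependent constant. The statement leaves $C$ unspecified, so it suffices to note that $C$ depends only on the Lipschitz character and the diameter of $\Omega_f^t$. For the later use in the LBB proof, uniformity in $t$ should be remarked on: it holds as long as the particle stays a positive distance from $\partial\Omega$ and the pillars, since the cover and the chain constants can then be chosen uniformly. I expect this to be stated as a standing geometric assumption rather than proved.
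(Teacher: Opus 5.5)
Your proposal is correct, and it takes a different route from the paper. The paper gives no proof of this lemma; it only cites Girault--Raviart \cite{girault1979finite}.

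The classical proof there is essentially your ``alternative route''. It starts from Ne\v{c}as' inequality $\|p\|_{L^2}\le C(\|p\|_{H^{-1}}+\|\nabla p\|_{H^{-1}})$ on a bounded Lipschitz domain. A compactness/closed-range argument then removes the $H^{-1}$ term on $L^2_0$; connectedness rules out nonzero constants, and the Poincar\'e inequality passes between $|\cdot|_{H^1}$ and $\|\cdot\|_{H^1}$ in the denominator.

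Your primary route, Bogovski\u{\i}'s operator on a finite cover of $\Omega_f^t$ by sets star-shaped with respect to balls, with the local means corrected along a chain, is a genuinely different proof. It is constructive: it gives an explicit bounded right inverse of the divergence. It also controls $C$ by the number of pieces, their diameter-to-ball-radius ratios and the overlap chain. The Ne\v{c}as/compactness route gives no such information, because its constant comes out of a contradiction argument.

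This difference matters for the point you raise at the end. The paper applies this lemma (together with Assumption~\ref{assumption:regularity}) on $\Omega_f^t$ at every time and claims constants independent of $\mathrm{Re}$, $\gamma$ and $\Delta t$, but never discusses uniformity in $t$. Your construction makes transparent that $C$ stays bounded while the particle keeps a positive distance from $\partial\Omega$, and that it degenerates in thin gaps. The cited classical result leaves this implicit.

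Both routes rely on $\Omega_f^t$ being connected, which you correctly flag. If it were disconnected, a piecewise-constant pressure with zero global mean would violate the inequality.
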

This is a standard result for the Stokes equation, and we refer to \cite{girault1979finite} for the proof. This classical result ensures the well-posedness of the Stokes problem and will play a crucial role in our analysis. To establish the LBB condition for the coupled system~\eqref{eq:linearised_continuous}, we follow a constructive approach: for given $q \in Q$ and $\boldsymbol{\mu} \in \boldsymbol{\Lambda}$, we aim to construct a velocity field $\bu \in V$ such that the combined bilinear forms $b^t(q,\bu)$ and $c^t(\boldsymbol{\mu},\bu - \bU - \bomega \times \mathbf{r})$ are simultaneously bounded from below, while $\bu$ remains uniformly bounded in the $V$-norm.

The construction proceeds in two steps. First, we choose $\bu_1$ to match $\boldsymbol{\mu}$ in the solid region $\Omega_s^t$, ensuring a lower bound for the interface term. Second, we correct the divergence error introduced by $\bu_1$ by solving an auxiliary Stokes problem, using the inf-sup condition \eqref{eq:infsup} to control the pressure term $b^t(q,\bu)$. This strategy allows us to decouple the treatment of the Lagrange multiplier and the pressure, while maintaining uniform stability.

Now we can prove the LBB condition for the system \eqref{eq:linearised_continuous}.
\begin{theorem}
    Assume that the Assumption \ref{assumption:regularity} holds, then the LBB condition holds for the system \eqref{eq:linearised_continuous} , i.e., there exists a constant $\beta > 0$ independent of $\Reynolds$, $\gamma$ and $\Delta t$ such that
    \begin{equation}
        \inf_{{q}\in Q, \boldsymbol{\mu}\in\boldsymbol{\Lambda}} \sup_{\bu\in V,{\bU}\in T_M,{\bomega}\in R_M} \frac{{b}^t({q},\bu)+{c}^t({\boldsymbol{\mu}},\bu - {\bU} - {\bomega}\times\mathbf{r})}{(\|{q}\|_Q^2\|{\boldsymbol{\mu}}\|_{\boldsymbol{\Lambda}}^2)^\frac12 (\|\bu\|_V^2 + \|{\bU}\|_{T_M}^2 + \|{\bomega}\|_{R_M}^2)^\frac12} = \beta > 0.
    \end{equation}
\end{theorem}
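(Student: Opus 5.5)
The plan is to build the supremizer explicitly. Fix $q\in Q$ and $\boldsymbol{\mu}\in\boldsymbol{\Lambda}$, write $\kappa=\Reynolds/(\gamma\Delta t)$, and look for $(\bu,\bU,\bomega)$ in the form
\[
\bu=\bu_1+\delta\bu_2,\qquad (\bU,\bomega)=\text{minus the rigid part of }\boldsymbol{\mu},
\]
with $\delta>0$ a small constant independent of $\kappa$. The proof has three steps: handle $\boldsymbol{\mu}$, handle $q$, then combine.

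\emph{Step 1 (the multiplier).} Decompose $\boldsymbol{\mu}=\boldsymbol{\mu}_R+\boldsymbol{\mu}_\perp$, where $\boldsymbol{\mu}_R=\mathbf{V}_\mu+\boldsymbol{\xi}_\mu\times\mathbf r$ is the $c^t$-orthogonal projection onto rigid motions.

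Since $\varepsilon$ vanishes on rigid motions, this projection is just the $L^2(\Omega_s^t)$ projection and does not depend on $\kappa$. Consequently $\boldsymbol{\mu}_\perp$ is $L^2$-orthogonal to rigid motions, and Korn's inequality on $\Omega_s^t$ gives
\[
\|\boldsymbol{\mu}_\perp\|_{H^1(\Omega_s^t)}\le C\|\varepsilon(\boldsymbol{\mu}_\perp)\|_{L^2(\Omega_s^t)}.
\]

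Set $\bU=-\mathbf V_\mu$ and $\bomega=-\boldsymbol{\xi}_\mu$. Define $\bu_1=E\boldsymbol{\mu}_\perp$, where $E$ is a fixed extension operator from $\Omega_s^t$ into $[H^1_0(\Omega)]^d$ (a Stein extension followed by a cutoff; this uses $\partial\Omega_s^t\cap\partial\Omega=\emptyset$). The extension is bounded simultaneously in $L^2$ and $H^1$.

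Then $\bu_1-\bU-\bomega\times\mathbf r=\boldsymbol{\mu}$ on $\Omega_s^t$, so $c^t(\boldsymbol{\mu},\bu_1-\bU-\bomega\times\mathbf r)=\|\boldsymbol{\mu}\|_{\boldsymbol{\Lambda}}^2$. Moreover, by Korn,
\[
\|\bu_1\|_V^2\le C\bigl(\kappa\|\boldsymbol{\mu}_\perp\|^2_{L^2}+\|\boldsymbol{\mu}_\perp\|_{H^1}^2\bigr)\le C\|\boldsymbol{\mu}\|^2_{\boldsymbol{\Lambda}},
\]
and also $\|\bU\|_{T_M}^2+\|\bomega\|_{R_M}^2=\|\boldsymbol{\mu}_R\|_{\boldsymbol{\Lambda}}^2\le\|\boldsymbol{\mu}\|_{\boldsymbol{\Lambda}}^2$.

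\emph{Step 2 (the pressure: robust Stokes inf-sup in the $Q$-norm).} I need $\bu_2\in H_0^1(\Omega_f^t)$, extended by zero to $\Omega$, such that
\[
b^t(q,\bu_2)\ge c_0\|q\|_Q^2,\qquad \|\bu_2\|_V\le C\|q\|_Q,
\]
with constants independent of $\kappa$. Because $\bu_2$ vanishes on $\Omega_s^t$, it does not interact with the $c^t$-term.

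Following the Mardal--Winther / Bramble--Pasciak argument, set $g=M^{-1}\tilde S q\in L^2_0(\Omega_f^t)$. Using $\tilde S^{-1}=M^{-1}+\kappa A^{-1}$, one has
\[
\|q\|_Q^2=\langle \tilde S q,q\rangle=\|g\|_{L^2}^2+\kappa\,\|g\|_{A^{-1}}^2,
\]
and the second term is comparable to $\kappa\|g\|^2_{H^{-1}}$ on mean-zero functions. Let $\bu_2$ solve the Stokes problem \eqref{eq:stokes_aux2} with data $-g$, with the sign chosen so that $-(q,\nabla\cdot\bu_2)=\langle\tilde S q,q\rangle$.

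The $H^1$ part of the $V$-norm is controlled by Lemma~\ref{lemma:infsup_stationary_stokes}: $\|\bu_2\|_{H^1}\le C\|g\|_{L^2}$. The $L^2$ part is controlled by Lemma~\ref{lemma:stokes_estimate} under Assumption~\ref{assumption:regularity}: $\|\bu_2\|_{L^2}\le C\|g\|_{H^{-1}}$. Together,
\[
\|\bu_2\|_V^2\le C\bigl(\|g\|^2_{L^2}+\kappa\|g\|^2_{H^{-1}}\bigr)\le C\|q\|_Q^2 .
\]
I expect this step to be the main obstacle. The delicate points are the exact identification of $\langle\tilde S q,q\rangle$ with $g$ (the $A^{-1}$ part is only comparable to the $H^{-1}$ norm on mean-zero functions) and the need for both bounds on $\bu_2$ to be uniform in $\kappa$ at the same time.

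\emph{Step 3 (combination).} Set $\bu=\bu_1+\delta\bu_2$. Since $\bu_2=0$ on $\Omega_s^t$, the $c^t$-term is unchanged, and
\[
b^t(q,\bu)+c^t(\boldsymbol{\mu},\bu-\bU-\bomega\times\mathbf r)\ \ge\ \|\boldsymbol{\mu}\|_{\boldsymbol{\Lambda}}^2+\delta c_0\|q\|_Q^2-|b^t(q,\bu_1)|.
\]
The cross term is bounded using the continuity lemma for $b^t$ proved above:
\[
|b^t(q,\bu_1)|\le C\|q\|_Q\|\boldsymbol{\mu}\|_{\boldsymbol{\Lambda}}\le \tfrac12\|\boldsymbol{\mu}\|^2_{\boldsymbol{\Lambda}}+\tfrac{C^2}{2}\|q\|_Q^2 .
\]
This loses a multiple of $\|q\|_Q^2$ that $\delta$ cannot absorb when $\delta$ is small, so the roles are swapped. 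Scale instead $\bu=\bu_2+\delta(\bu_1-\bU-\bomega\times\mathbf r\text{ data})$, i.e.\ put the small parameter on the multiplier part. Then
\[
|\delta\, b^t(q,\bu_1)|\le \tfrac{c_0}{2}\|q\|_Q^2+C\delta^2\|\boldsymbol{\mu}\|^2_{\boldsymbol{\Lambda}},
\]
which is dominated by $\tfrac{c_0}{2}\|q\|_Q^2+\delta\|\boldsymbol{\mu}\|_{\boldsymbol{\Lambda}}^2$ once $\delta$ is small. The total velocity norm is $\le C(\|q\|_Q^2+\|\boldsymbol{\mu}\|_{\boldsymbol{\Lambda}}^2)^{1/2}$, and dividing yields $\beta>0$ independent of $\Reynolds$, $\gamma$ and $\Delta t$.
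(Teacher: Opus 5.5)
Your Steps 1 and 2 are sound, and Step 1 is actually better than the paper's argument. The paper takes $\bU=\bomega=\mathbf 0$ and $\bu_1=E\boldsymbol\mu$, which is not uniform as $\kappa=\Reynolds/(\gamma\Delta t)\to0$: a constant $\boldsymbol\mu$ has $\|\boldsymbol\mu\|_{\boldsymbol\Lambda}^2=O(\kappa)$, but every extension of it in $[H^1_0(\Omega)]^d$ has $V$-norm of order one. Absorbing the rigid part into $(\bU,\bomega)$ is the right fix.

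In Step 2, use Lemma~\ref{lemma:stokes_estimate} in the form its proof actually gives. The dual pressure $\theta$ lies in $H^1(\Omega_f^t)\cap L^2_0(\Omega_f^t)$, not in $H^1_0$. So the bound is $\|\bu_2\|_{L^2}\le C\sup_{p\in H^1(\Omega_f^t)\cap L^2_0(\Omega_f^t)}(g,p)_{\Omega_f^t}/\|\nabla p\|_{L^2}$, which is exactly the $A^{-1}$-term in $\|q\|_Q^2$. The two-sided ``comparability'' with $H^{-1}$ that you invoke is false, but you only need the direction that holds.

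The gap is the cross-term estimate in Step 3. You take $|b^t(q,\bu_1)|\le C\|q\|_Q\|\boldsymbol\mu\|_{\boldsymbol\Lambda}$, with $C$ independent of $\kappa$, from the paper's continuity lemma. But functions in $V$ do not vanish on $\Gamma_I^t$. With $\mathbf n_s$ the outward normal of $\Omega_s^t$,
\[
b^t(q,\mathbf w)=(\nabla q,\mathbf w)_{\Omega_f^t}+\int_{\Gamma_I^t}q\,\mathbf w\cdot\mathbf n_s\,ds .
\]
For fixed smooth $q$ and $\kappa\to\infty$ one has $\|q\|_Q\sim\kappa^{-1/2}\|\nabla q\|_{L^2}$. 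This controls the volume term but not the interface term. A normal boundary layer $\mathbf w$ of width $\kappa^{-1/2}$ at $\Gamma_I^t$ shows that the continuity constant of $b^t$ grows at least like $\kappa^{1/4}$. The paper's proof of that lemma drops this term when it writes $\langle T_\bv^{-1}B^*q,B^*p\rangle=(T_\bv^{-1}B^*q,\nabla p)$.

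No rescaling of $\delta$ can repair this, because the uniform inf--sup claimed in the statement itself fails as $\kappa\to\infty$. Take $q$ smooth and mean-free with $q\equiv c\neq0$ near $\Gamma_I^t$. Define $\boldsymbol\mu$ by $c^t(\boldsymbol\mu,\bv)=-c\int_{\Gamma_I^t}\bv\cdot\mathbf n_s\,ds$ for all $\bv\in\boldsymbol\Lambda$. We have $\int_{\Gamma_I^t}\mathbf n_s\,ds=0$ and $\int_{\Gamma_I^t}(\bomega\times\br)\cdot\mathbf n_s\,ds=\int_{\Omega_s^t}\nabla\cdot(\bomega\times\br)\,d\bx=0$. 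Hence every test triple gives
\[
b^t(q,\bu)+c^t(\boldsymbol\mu,\bu-\bU-\bomega\times\br)=(\nabla q,\bu)_{\Omega_f^t}\le\kappa^{-1/2}\|\nabla q\|_{L^2(\Omega_f^t)}\|\bu\|_V .
\]
Testing the Riesz representation of $\boldsymbol\mu$ with a boundary layer of width $\kappa^{-1/2}$ gives $\|\boldsymbol\mu\|_{\boldsymbol\Lambda}\ge c_0|c|\kappa^{-1/4}$. Therefore $\beta=O(\kappa^{-1/4})$.

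This $\boldsymbol\mu$ is $L^2$-orthogonal to rigid motions. In your construction, $b^t(q,\bu_1)=(\nabla q,\bu_1)_{\Omega_f^t}-\|\boldsymbol\mu\|^2_{\boldsymbol\Lambda}$, which cancels the gain $\delta\|\boldsymbol\mu\|_{\boldsymbol\Lambda}^2$ exactly.

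What your argument does establish is the LBB condition uniformly for $\kappa\in(0,\kappa_0]$. There $\|q\|_{L^2}^2\le(1+\kappa_0/\lambda_1)\|q\|_Q^2$, with $\lambda_1$ the first nonzero Neumann eigenvalue of $\Omega_f^t$. So $|b^t(q,\bu_1)|\le\|q\|_{L^2}\|\nabla\cdot\bu_1\|_{L^2}$ gives the cross-term bound with a $\kappa_0$-dependent constant.

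The paper's route, in which $\bu_2$ cancels $\nabla\cdot\bu_1$, meets the same obstruction. For instance, its Stokes datum $\nabla\cdot\bu_1|_{\Omega_f^t}$ has mean $-\int_{\Omega_s^t}\nabla\cdot\boldsymbol\mu\,d\bx$, which is nonzero in general, so that Stokes problem is not solvable as posed.
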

\begin{proof}
    It suffices to show that there exists a constant $C$ such that for any ${q}\in Q$, ${\boldsymbol{\mu}}\in \boldsymbol{\Lambda}$, there exists $\bu\in V$, ${\bU}\in T_M$, ${\bomega}\in R_M$ such that
    \begin{equation}
        \begin{aligned}
        &{b}^t({q},\bu)+{c}^t({\boldsymbol{\mu}},\bu - {\bU} - {\bomega}\times\mathbf{r}) \geq \|{q}\|_Q^2+\|{\boldsymbol{\mu}}\|_{\boldsymbol{\Lambda}}^2,\\
        &\|\bu\|_V^2 + \|{\bU}\|_{T_M}^2 + \|{\bomega}\|_{R_M}^2 \leq C(\|{q}\|_Q^2 + \|{\boldsymbol{\mu}}\|_{\boldsymbol{\Lambda}}^2).
        \end{aligned}
    \end{equation}
    First we choose $\bu_1 \in V$ such that 
    \begin{equation} 
        \bu_1 = \boldsymbol{\mu}, \text{ in } \Omega_s^t, \quad \|\bu_1\|_V^2 \leq C\|\boldsymbol{\mu}\|_{\boldsymbol{\Lambda}}^2.
    \end{equation}
    This can be done by extending the function $\boldsymbol{\mu}$ to the whole domain $\Omega$ by constructing an extension operator $E:\boldsymbol{\Lambda}\to V$ such that 
    \begin{equation}
        \begin{aligned}
            &E \boldsymbol{\mu} = \boldsymbol{\mu}, \text{ in }\Omega_s^t,\\
            &\|E\boldsymbol{\mu}\|_{L^2(\Omega)}\leq C \|\boldsymbol{\mu}\|_{L^2(\Omega_s^t)},\\
            &\|E\boldsymbol{\mu}\|_{H^1(\Omega)}\leq C\|\boldsymbol{\mu}\|_{H^1(\Omega_s^t)}.
        \end{aligned}
    \end{equation} We refer to \cite{evans2022partial} for the construction of such operator. The remaining part is to choose $\bu_2\in V$ such that
    \begin{equation}
        \begin{aligned}\label{cond:bu2}
            &(\nabla\cdot(\bu_1+\bu_2),q)_{\Omega_f^t} = \|q\|_Q^2, \text{ in } \Omega_f^t,\\
            &\bu_2 = 0, \text{ in } \Omega_s^t,\\
            &\|\bu_2\|_V^2 \leq C(\|{q}\|_Q^2 + \|{\boldsymbol{\mu}}\|_{\boldsymbol{\Lambda}}^2).
        \end{aligned}
    \end{equation}
    To construct such a function, we consider the stationary Stokes equation. Let $(\bu_2,p)$ be the solution of the Stokes equation
    \begin{equation}\label{eq:stokes_aux3}
        \begin{aligned}
            -\Delta \bu_2 + \nabla p &= 0, &\text{in }\Omega_f^t,\\
            \nabla\cdot\bu_2 &= \tilde{S}q - \nabla\cdot\bu_1, &\text{in }\Omega_f^t,\\
            \bu_2 &= 0, &\text{on }\partial\Omega_f^t,\\
            \int_{\Omega_f^t} p \ dx &= 0.
        \end{aligned}
    \end{equation}
    We will show that such a function exists and satisfies the condition \eqref{cond:bu2}. 
    The first condition is trivial by multiplying the second equation in \eqref{eq:stokes_aux3} by $q$ and integrating over $\Omega_f^t$. For the third condition, we expand the norm $\|\bu_2\|_V^2$ as
    \begin{equation}
        \begin{aligned}
            \|\bu_2\|_V^2 &= \frac{\Reynolds}{\gamma\Delta t}\|\bu_2\|_{L^2(\Omega_f^t)}^2 + |\bu_2|_{H^1(\Omega_f^t)}^2.
        \end{aligned}
    \end{equation}
    For the first term, we can use the result from lemma \ref{lemma:stokes_estimate} to get
    \begin{equation}
        \frac{\Reynolds}{\gamma\Delta t}\|\bu_2\|_{L^2(\Omega_f^t)}^2\leq C \frac{\Reynolds}{\gamma \Delta t}\|\tilde{S}q-\nabla\cdot \bu_1\|_{H^{-1}(\Omega_f^t)}^2.
    \end{equation}
    For the second term, we use lemma \ref{lemma:infsup_stationary_stokes} to get
    \begin{equation}
        |\bu_2|_{H^1(\Omega_f^t)}^2\leq C\|\tilde{S}q - \nabla\cdot\bu_1\|_{L^2(\Omega_f^t)}^2.
    \end{equation}
    Notice that 
    \begin{equation}
        \begin{aligned}
            \frac{\Reynolds}{\gamma\Delta t}\|\tilde{S}q\|_{H^{-1}(\Omega_f^t)}^2 + \|\tilde{S}q\|_{L^2(\Omega_f^t)}^2 &= \frac{\Reynolds}{\gamma\Delta t}(\sup_{ p\in Q}\frac{\langle\tilde{S}q,p\rangle}{|p|_{H^1(\Omega_f^t)}})^2 + \langle M^{-1}\tilde{S}q,\tilde{S}q\rangle\\
            & = \frac{\Reynolds}{\gamma\Delta t}\sup_{ p\in Q}\frac{(\nabla A^{-1}\tilde{S}q,\nabla A^{-1}\tilde{S}p)_{\Omega_f^t}^2}{(\nabla A^{-1}\tilde{S}p,\nabla A^{-1}\tilde{S}p)_{\Omega_f^t}} + \langle M^{-1}\tilde{S}q,\tilde{S}q\rangle\\
            & = \frac{\Reynolds}{\gamma\Delta t}\frac{(\nabla A^{-1}\tilde{S}q,\nabla  A^{-1}\tilde{S}q)_{\Omega_f^t}^2}{(\nabla  A^{-1}\tilde{S}q,\nabla  A^{-1}\tilde{S}q)_{\Omega_f^t}} + \langle M^{-1}\tilde{S}q,\tilde{S}q\rangle\\
            &= \frac{\Reynolds}{\gamma\Delta t}(\nabla  A^{-1}\tilde{S}q,\nabla  A^{-1}\tilde{S}q)_{\Omega_f^t} + \langle M^{-1}\tilde{S}q,\tilde{S}q\rangle\\
            &= \frac{\Reynolds}{\gamma\Delta t}\langle A^{-1}\tilde{S}q, \tilde{S}q\rangle + \langle M^{-1}\tilde{S}q,\tilde{S}q\rangle\\
            &= \langle \tilde{S}q,q\rangle = \|q\|_Q^2.
        \end{aligned}
    \end{equation}
    Moreover, the following estimate holds from Poincar\'e inequality
    \begin{equation}
        \begin{aligned}
        \frac{\Reynolds}{\gamma\Delta t}\|\nabla\cdot \bu_1\|_{H^{-1}(\Omega_f^t)}^2 + \|\nabla\cdot \bu_1\|_{L^2(\Omega_f^t)}^2 &\leq C(\frac{\Reynolds}{\gamma\Delta t}\|\bu_1\|_{L^2(\Omega_f^t)}^2 + \|\bu_1\|_{H^1(\Omega_f^t)}^2)\\
        &\leq C\|\bu_1\|_V^2\leq C\|\boldsymbol{\mu}\|_{\boldsymbol{\Lambda}}^2.
        \end{aligned}
    \end{equation}
    Combining the above inequalities, we have
    \begin{equation}
        \|\bu_2\|_V^2 \leq C (\|{q}\|_Q^2 + \|{\boldsymbol{\mu}}\|_{\boldsymbol{\Lambda}}^2).
    \end{equation}
    Finally, we can choose $\bu = \bu_1 + \bu_2$, ${\bU} = \mathbf{0}$, ${\bomega} = \mathbf{0}$, and we have
    \begin{equation}
        \begin{aligned}
            &{b}^t({q},\bu)+{c}^t({\boldsymbol{\mu}},\bu - {\bU} - {\bomega}\times\mathbf{r}) \geq \|{q}\|_Q^2\|{\boldsymbol{\mu}}\|_{\boldsymbol{\Lambda}}^2,\\
            &\|\bu\|_V^2 + \|{\bU}\|_{T_M}^2 + \|{\bomega}\|_{R_M}^2 \leq C(\|{q}\|_Q^2 + \|{\boldsymbol{\mu}}\|_{\boldsymbol{\Lambda}}^2).
        \end{aligned}
    \end{equation}
    Thus we have proved the LBB condition for the system \eqref{eq:linearised_continuous}.
\end{proof}
We now turn to the discretized system~\eqref{eq:linearised_discretized}. 
The coercivity and continuity conditions for the discrete problem follow immediately from the continuous case, as the finite element spaces $V_h$, $Q_h$, $T_M$, $R_M$, and $\boldsymbol{\Lambda}_h$ are subspaces of their continuous counterparts, and the bilinear forms retain the same structure. 

However, the discrete LBB condition requires a more refined argument. 
For the discrete LBB condition, however, a constructive argument is required to ensure uniform stability. 
We thus assume the following mesh regularity.

\begin{assumption}\label{assumption:mesh}
    We assume that the family of triangulations $(\mathcal{T}_h)_h$ is shape regular, i.e., there exists a constant $\sigma > 0$ such that for all $h$ and all simplices $K \in \mathcal{T}_h$,
    \[
    \frac{h_K}{\rho_K} \leq \sigma,
    \]
    where $h_K$ is the diameter of $K$ and $\rho_K$ is the diameter of the largest ball contained in $K$. 
\end{assumption}

This condition ensures the existence of a bounded Scott--Zhang interpolation operator $\Pi_h: H^1(\Omega) \to V_h$ satisfying
\[
\|\Pi_h v\|_{L^2(\Omega)} \leq C \|v\|_{L^2(\Omega)}, \quad
\|\Pi_h v\|_{H^1(\Omega)} \leq C \|v\|_{H^1(\Omega)},
\]
for all $v \in H^1(\Omega)$, with $C$ depending only on $\sigma$ and the spatial dimension, but independent of $h$. For further details, we refer to~\cite{scott1990finite,brenner2008mathematical}.

With this tool at our disposal, we are now able to construct the required discrete test functions in a stable manner, which leads to the following discrete LBB condition.
\begin{lemma}\label{thm:lbb_discrete}
    Assume that Assumptions~\ref{assumption:regularity} and~\ref{assumption:mesh} hold.
    Then the LBB condition holds for the system~\eqref{eq:linearised_discretized}, i.e., there exists a constant $\beta > 0$, independent of $h$, $\Reynolds$, $\gamma$, and $\Delta t$, such that
    \begin{equation}
        \inf_{{q}_h\in Q_h, \boldsymbol{\mu}_h\in \boldsymbol{\Lambda}_h} \sup_{\bu_h\in V_h,{\bU}_h\in T_M,{\bomega}_h\in R_M} \frac{{b}^t({q}_h,\bu_h)+{c}^t({\boldsymbol{\mu}}_h,\bu_h - {\bU}_h - {\bomega}_h\times\mathbf{r})}{(\|{q}_h\|_Q^2+\|{\boldsymbol{\mu}}_h\|_{\boldsymbol{\Lambda}}^2)^\frac12 (\|\bu_h\|_V^2 + \|{\bU}_h\|_{T_M}^2 + \|{\bomega}_h\|_{R_M}^2)^\frac12} = \beta > 0.
    \end{equation}
\end{lemma}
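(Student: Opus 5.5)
The plan mirrors the continuous construction, replacing each continuous object by a discrete counterpart. Given $q_h\in Q_h$ and $\boldsymbol{\mu}_h\in\boldsymbol{\Lambda}_h$, we will build $\bu_h=\bu_{1,h}+\bu_{2,h}\in V_h$, take $\bU_h=\mathbf{0}$ and $\bomega_h=\mathbf{0}$, and aim for two bounds:
\begin{equation*}
b^t(q_h,\bu_h)+c^t(\boldsymbol{\mu}_h,\bu_h)\ge c_0(\|q_h\|_Q^2+\|\boldsymbol{\mu}_h\|_{\boldsymbol{\Lambda}}^2),
\qquad
\|\bu_h\|_V^2\le C(\|q_h\|_Q^2+\|\boldsymbol{\mu}_h\|_{\boldsymbol{\Lambda}}^2),
\end{equation*}
with constants independent of $h$, $\Reynolds$, $\gamma$ and $\Delta t$.

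\textbf{Construction of $\bu_{1,h}$.} Since the mesh is fitted and $\boldsymbol{\Lambda}_h$ is the restriction of the $P_2$ velocity space to $\Omega_s^t$, we can extend $\boldsymbol{\mu}_h$ discretely. Take the continuous extension $E\boldsymbol{\mu}_h$. Apply a Scott--Zhang operator $\Pi_h$ whose averaging facets on $\overline{\Omega_s^t}$ are chosen inside the solid, so that $\Pi_h$ reproduces discrete functions there. Then $\bu_{1,h}:=\Pi_h E\boldsymbol{\mu}_h$ satisfies $\bu_{1,h}=\boldsymbol{\mu}_h$ in $\Omega_s^t$. Uniform $L^2$ and $H^1$ stability of $\Pi_h$ and $E$, applied separately to the $\Reynolds/(\gamma\Delta t)$-weighted $L^2$ part and the $H^1$ part, give $\|\bu_{1,h}\|_V\le C\|\boldsymbol{\mu}_h\|_{\boldsymbol{\Lambda}}$. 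It follows that $c^t(\boldsymbol{\mu}_h,\bu_{1,h})=\|\boldsymbol{\mu}_h\|_{\boldsymbol{\Lambda}}^2$.

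\textbf{Construction of $\bu_{2,h}$.} We need $\bu_{2,h}\in V_h$ vanishing on $\Omega_s^t$, so that it does not disturb the $c^t$ term, and controlling $b^t(q_h,\cdot)$. We use a Fortin-type argument on $\Omega_f^t$. Let $\bw$ be the continuous Stokes solution from \eqref{eq:stokes_aux3} with datum $\tilde S q_h$ only; the $\bu_{1,h}$ contribution is treated below. We already know $\|\bw\|_V^2\le C\|q_h\|_Q^2$ and $(\nabla\cdot\bw,q_h)=\|q_h\|_Q^2$. Set $\bu_{2,h}=\Pi_h^F\bw$, where $\Pi_h^F$ is a Fortin operator for the Taylor--Hood pair on the (isoparametric) fluid mesh. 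This operator must preserve $(\nabla\cdot\,\cdot,q_h)_{\Omega_f^t}$ for all $q_h\in Q_h$, keep zero boundary values on $\partial\Omega_f^t$, and be simultaneously $L^2$- and $H^1$-stable. The standard Taylor--Hood Fortin operator is Scott--Zhang plus a local correction. Its $L^2$ stability follows from Scott--Zhang $L^2$ stability, together with local corrections bounded by $h|\bw|_{H^1}$ on patches; one must check that these do not spoil the weighted $L^2$ term. I would bound them using the $H^2$ regularity of Assumption~\ref{assumption:regularity} only if needed.

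The residual term $b^t(q_h,\bu_{1,h})$ is then absorbed by Young's inequality and continuity of $b^t$ (the continuity lemma), after scaling $\bu_h=\bu_{1,h}+\theta\,\bu_{2,h}$ with a suitable constant $\theta$. Since $\bu_{2,h}$ vanishes in $\Omega_s^t$, the $c^t$ term is unchanged. Then $b^t(q_h,\bu_h)+c^t(\boldsymbol{\mu}_h,\bu_h)\ge \theta\|q_h\|_Q^2+\|\boldsymbol{\mu}_h\|_{\boldsymbol{\Lambda}}^2-C\|q_h\|_Q\|\boldsymbol{\mu}_h\|_{\boldsymbol{\Lambda}}$. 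Choosing $\theta$ large enough yields the first bound with a uniform $c_0$, and the norm bound follows by the triangle inequality.

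The main obstacle is the Fortin step. The discrete inf-sup must hold in the parameter-dependent norm $\|q\|_Q^2=\langle\tilde S q,q\rangle$, which interpolates between the $L^2$ and weighted $H^1$-dual norms; stability of $\Pi_h^F$ in $H^1$ alone is not enough. If the direct route fails, I would fall back on a discrete analogue of Lemma~\ref{lemma:stokes_estimate}, namely an $L^2$ duality estimate for discrete Stokes. Alternatively, I would work with the discrete $\tilde S_h$ built from discrete $M$ and $A$ on $Q_h$, and use the norm equivalence of $\tilde S$ and $\tilde S_h$ restricted to $Q_h$ (as in the Mardal--Winther robust Stokes analysis) to transfer the continuous bound.
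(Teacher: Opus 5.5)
Your architecture differs from the paper's in two legitimate ways. First, you absorb $b^t(q_h,\bu_{1,h})$ by Young's inequality after scaling $\bu_{2,h}$ by $\theta$, whereas the paper puts $-\nabla\cdot\bu_{h,1}$ into the divergence datum. Second, you take a Fortin interpolant of the continuous Stokes velocity $\mathbf{w}$ instead of solving a discrete Stokes problem on $\Omega_f^t$. Your choice of Scott--Zhang averaging sets inside $\overline{\Omega_s^t}$ also properly justifies $\Pi_hE\boldsymbol{\mu}_h=\boldsymbol{\mu}_h$ in $\Omega_s^t$, which the paper asserts without comment.

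The gap is the step you flag yourself, and it is the step that carries the robustness. With local corrections bounded by $Ch|\mathbf{w}|_{H^1}$ you only get
\[
\frac{\Reynolds}{\gamma\Delta t}\|\Pi_h^F\mathbf{w}\|_{L^2(\Omega_f^t)}^2\le C\,\frac{\Reynolds}{\gamma\Delta t}\Big(\|\mathbf{w}\|_{L^2(\Omega_f^t)}^2+h^2|\mathbf{w}|_{H^1(\Omega_f^t)}^2\Big),\qquad |\mathbf{w}|_{H^1(\Omega_f^t)}\simeq\|\tilde{S}q_h\|_{L^2(\Omega_f^t)} .
\]
Since $\|q_h\|_Q^2=\frac{\Reynolds}{\gamma\Delta t}\|\tilde{S}q_h\|_{H^{-1}}^2+\|\tilde{S}q_h\|_{L^2}^2$, the second term is controlled when $\frac{\Reynolds}{\gamma\Delta t}h^2\lesssim 1$. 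Nothing in your argument controls it otherwise, i.e.\ as $\Delta t\to 0$ at fixed $h$, which is exactly the regime the lemma must cover. Appealing to Assumption~\ref{assumption:regularity} does not help: it trades $h|\mathbf{w}|_{H^1}$ for $h^2|\mathbf{w}|_{H^2}$, costs a derivative on the datum, and leaves the same factor $\frac{\Reynolds}{\gamma\Delta t}h^2$.

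In the paper, the ingredient that closes this is an inverse estimate on the divergence datum $g$. After $\|\bu_{h,2}-\bu_2\|_{L^2}\le Ch|\bu_2|_{H^1}\le Ch\|g\|_{L^2}$, it uses $h\|g\|_{L^2}\le C\|g\|_{H^{-1}}$, so that $\frac{\Reynolds}{\gamma\Delta t}h^2\|g\|_{L^2}^2$ is absorbed by the $H^{-1}$ part of $\|q_h\|_Q^2$. Your fallback of a discrete $L^2$ duality estimate is essentially this route, and it closes only through that inverse estimate. In your Fortin route you need the same move: bound the correction by $Ch\|g\|_{L^2}\le C\|g\|_{H^{-1}}$. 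An inverse estimate holds only for finite element functions, and on a quasi-uniform mesh, which is more than Assumption~\ref{assumption:mesh}. The function $\tilde{S}q_h$ with the continuous $\tilde{S}$ is not a finite element function. So the datum should be the representative $g_h\in Q_h$ of $\tilde{S}_hq_h$ built from the discrete mass and stiffness operators, combined with the equivalence of discrete and continuous $Q$-norms on $Q_h$ that you list only as an alternative.

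The other way to close the gap is a Fortin operator for Taylor--Hood that is uniformly bounded in both $L^2$ and $H^1$ (Mardal, Sch\"oberl and Winther). Then $\frac{\Reynolds}{\gamma\Delta t}\|\Pi_h^F\mathbf{w}\|_{L^2}^2\le C\frac{\Reynolds}{\gamma\Delta t}\|\mathbf{w}\|_{L^2}^2$ holds directly. Your argument would then be complete, and cleaner than the paper's, since it needs neither an Aubin--Nitsche step nor an inverse estimate. But that operator is a nontrivial theorem with its own mesh hypotheses, and it is precisely what your proposal leaves unproved.
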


\begin{proof}
    The proof is analogous to the continuous case. We first choose $\bu_{h,1} \in V_h$ such that 
    \begin{equation} 
        \bu_{h,1} = \boldsymbol{\mu}_h \quad \text{in } \Omega_s^t, \quad \|\bu_{h,1}\|_V^2 \leq C\|\boldsymbol{\mu}_h\|_{\boldsymbol{\Lambda}}^2.
    \end{equation}
    This is achieved by interpolating the function $\bu_1$ into the finite element space $V_h$. Let $\Pi_h$ denote the Scott--Zhang interpolation operator. Then we have
    \begin{equation}
        \begin{aligned}
            &\frac{\Reynolds}{\gamma \Delta t}\|\Pi_h\bu_1\|_{L^2(\Omega_f^t)}^2 \leq C  \frac{\Reynolds}{\gamma \Delta t}\|\bu_1\|_{L^2(\Omega_f^t)}^2\leq C \|\boldsymbol{\mu}_h\|_{\boldsymbol{\Lambda}}^2,\\
            &|\Pi_h\bu_1|_{H^1(\Omega_f^t)}^2 \leq C |\bu_1|_{H^1(\Omega_f^t)}^2 \leq C \|\boldsymbol{\mu}_h\|_{\boldsymbol{\Lambda}}^2.
        \end{aligned}
    \end{equation}
    Thus, $\bu_{h,1} = \Pi_h\bu_1$ satisfies the required condition. Next, we construct $\bu_{h,2}\in V_h$ such that
    \begin{equation}
        \begin{aligned}\label{cond:bu2_discretized}
            &(\nabla\cdot(\bu_{h,1}+\bu_{h,2}),q_h)_{\Omega_f^t} = \|q_h\|_Q^2, \quad \text{in } \Omega_f^t,\\
            &\bu_{h,2} = 0, \quad \text{in } \Omega_s^t,\\
            &\|\bu_{h,2}\|_V^2 \leq C(\|{q}_h\|_Q^2 + \|{\boldsymbol{\mu}}_h\|_{\boldsymbol{\Lambda}}^2).
        \end{aligned}
    \end{equation}
    This is accomplished by solving a discrete Stokes problem. Let $(\bu_{h,2},p_h)$ satisfy
    \begin{equation}\label{eq:stokes_aux4}
        \begin{aligned}
            (\nabla\bu_{h,2},\nabla\Psi_h) - (\nabla\cdot\Psi_h,p_h) &= 0, &\forall \Psi_h \in V_{h,0}^f,\\
            (\nabla\cdot\bu_{h,2},\phi_h) &= \langle \tilde{S}q_h, \phi_h\rangle - (\nabla\cdot \bu_{h,1}, \phi_h), &\forall \phi_h \in Q_h^f,\\
            \bu_{h,2} &= 0, &\text{in }\partial\Omega\cup \Omega_s^t,\\
            \int_{\Omega_f^t} p_h \ dx &= 0,
        \end{aligned}
    \end{equation}
    where $V_{h,0}^f$ denotes the finite element space with zero boundary conditions on $\partial\Omega_f^t$. Let $(\bu_2,p)$ be the solution of the continuous Stokes problem
    \begin{equation}\label{eq:stokes_aux5}
        \begin{aligned}
            -\Delta \bu_2 + \nabla p &= 0, &\text{in }\Omega_f^t,\\
            \nabla\cdot\bu_2 &= \tilde{S}q_h - \nabla\cdot\bu_{h,1}, &\text{in }\Omega_f^t,\\
            \bu_2 &= 0, &\text{on }\partial\Omega_f^t,\\
            \int_{\Omega_f^t} p \ dx &= 0.
        \end{aligned}
    \end{equation}
    Then, by \eqref{eq:stokes_aux4} and \eqref{eq:stokes_aux5} we have 
    \begin{equation}
        \begin{aligned}
            |\bu_{h,2}|_{H^1(\Omega_f^t)}^2 &= (\nabla\cdot\bu_{h,2},p_h)_{\Omega_f^t} = (\nabla\cdot\bu_{2},p_h)_{\Omega_f^t}\leq \|p_h\|_{L^2(\Omega_f^t)}|\bu_{2}|_{H^{1}(\Omega_f^t)}.
        \end{aligned}
    \end{equation}
    For $\|p_h\|_{L^2(\Omega_f^t)}$, we apply the discrete inf-sup condition for the Stokes equation to obtain
    \begin{equation}
        \|p_h\|_{L^2(\Omega_f^t)}\leq C\sup_{\mathbf{w}_h\in V_{h,0}^f}\frac{(\nabla\cdot\mathbf{w}_h,p_h)_{\Omega_f^t}}{|\mathbf{w}_h|_{H^1(\Omega_f^t)}} = C\sup_{\mathbf{w}_h\in V_{h,0}^f}\frac{(\nabla\mathbf{w}_h,\nabla\bu_{h,2})_{\Omega_f^t}}{|\mathbf{w}_h|_{H^1(\Omega_f^t)}} \leq C|\bu_{h,2}|_{H^1(\Omega_f^t)}.
    \end{equation}
    Hence,
    \begin{equation}
        |\bu_{h,2}|_{H^1(\Omega_f^t)} \leq C|\bu_{2}|_{H^1(\Omega_f^t)}.
    \end{equation}
    We also have the following estimate for the $L^2$ norm:
    \begin{equation}
        \begin{aligned}
        \|\bu_{h,2} - \bu_2\|_{L^2(\Omega_f^t)} &\leq Ch|\bu_2|_{H^1(\Omega_f^t)} \leq Ch(\|\tilde{S}q_h\|_{L^2(\Omega_f^t)} + \|\nabla\cdot\bu_{h,1}\|_{L^2(\Omega_f^t)}) \\
        &\leq C(\|\tilde{S}q_h\|_{H^{-1}(\Omega_f^t)} + \|\nabla\cdot\bu_{h,1}\|_{H^{-1}(\Omega_f^t)}).
        \end{aligned}
    \end{equation}
    Thus,
    \begin{equation}
        \|\bu_{h,2}\|_{L^2(\Omega_f^t)}\leq C(\|\bu_2\|_{L^2(\Omega_f^t)} + \|\tilde{S}q_h\|_{H^{-1}(\Omega_f^t)} + \|\nabla\cdot\bu_{h,1}\|_{H^{-1}(\Omega_f^t)}).
    \end{equation}
    Combining the above estimates with those from the continuous case, we obtain
    \begin{equation}
        \|\bu_{h,2}\|_V^2 \leq C (\|{q}_h\|_Q^2 + \|{\boldsymbol{\mu}}_h\|_{\boldsymbol{\Lambda}}^2).
    \end{equation}
    Finally, we set $\bu_h = \bu_{h,1} + \bu_{h,2}$, ${\bU}_h = \mathbf{0}$, and ${\bomega}_h = \mathbf{0}$. Then
    \begin{equation}
        \begin{aligned}
            &{b}^t({q}_h,\bu_h)+{c}^t({\boldsymbol{\mu}}_h,\bu_h - {\bU}_h - {\bomega}_h\times\mathbf{r}) \geq \|{q}_h\|_Q^2 + \|{\boldsymbol{\mu}}_h\|_{\boldsymbol{\Lambda}}^2,\\
            &\|\bu_h\|_V^2 + \|{\bU}_h\|_{T_M}^2 + \|{\bomega}_h\|_{R_M}^2 \leq C(\|{q}_h\|_Q^2 + \|{\boldsymbol{\mu}}_h\|_{\boldsymbol{\Lambda}}^2).
        \end{aligned}
    \end{equation}
    This completes the proof of the LBB condition for the system~\eqref{eq:linearised_discretized}.
\end{proof}

\section{Numerical results}
In this section, we present numerical results to evaluate the performance of the proposed numerical scheme. We begin with a convergence study for the benchmark problem of flow around a cylinder, which illustrates the necessity of using isoparametric elements in domains with curved boundaries. Next, we provide several numerical examples to demonstrate the feasibility of the scheme in simulating FSI problems involving a moving rigid body. We then apply the method to a representative problem arising in a DLD device, showcasing its capability in handling complex FSI scenarios. Finally, we examine the efficiency of the preconditioner introduced in the previous section.

\subsection{Convergence study}

\subsubsection{Flow around a cylinder}

We first investigate the spatial convergence of the numerical scheme on a problem with a curved boundary, using the classical benchmark of flow around a cylinder \cite{schafer1996benchmark}. The computational domain and boundary conditions are shown in Figure~\ref{fig:Cylinder}.

\begin{figure}[H]
\centering
\includegraphics[width=0.8\textwidth]{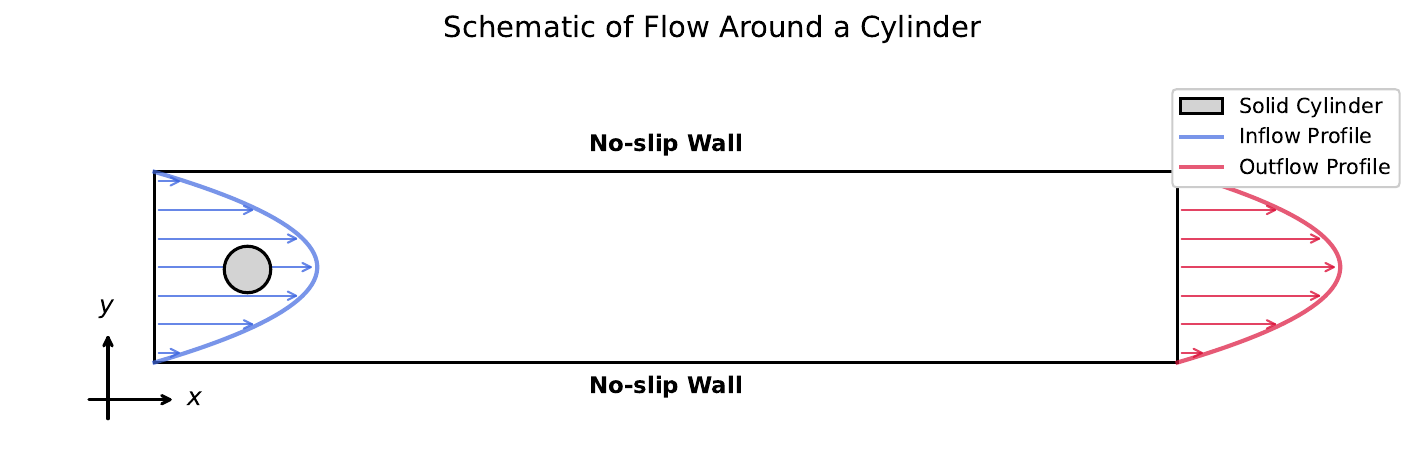}
\caption{Geometry and boundary conditions for flow around a cylinder.}
\label{fig:Cylinder}
\end{figure}

The domain is $\Omega = [0,2.2] \times [0,0.41]$, containing a cylinder of radius $r = 0.05$ centered at $(0.2, 0.2)$. The fluid density is $\rho_f = 1$ and the dynamic viscosity is $\mu_f = 0.001$. A parabolic inflow and outflow profile $\bu_f = \left(4u_m y(0.41 - y)/0.41^2,\, 0\right)$ is imposed on the left and right boundaries, with $u_m = 0.3$. No-slip boundary conditions are applied on the top and bottom walls and on the cylinder surface. The initial velocity field is set to zero. We compute the steady-state solution at a Reynolds number $\Reynolds = 20$, defined as $\Reynolds = \rho_f \bar{u} d / \mu_f$, where $d = 2r$ is the cylinder diameter and $\bar{u}$ is the average inflow velocity.

\begin{remark}
    We avoid the use of the do-nothing boundary condition at the outflow, as it may induce velocity singularities at the corners where the outflow meets the no-slip walls. For a detailed discussion, see \cite{he2024analytic,guo2006analytic} and references therein.
\end{remark}

To assess spatial convergence, we perform a mesh refinement study by successively halving the mesh size $h$. The velocity errors are measured in the $H^1(\Omega)$ and $L^2(\Omega)$ norms. We compare the performance of the standard Taylor-Hood element ($\mathbb{P}_2/\mathbb{P}_1$) with that of its isoparametric counterpart. The results are reported in Table~\ref{tab:convergence_cylinder} and depicted in Figure~\ref{fig:convergence_cylinder}, where the reference solution $\bu_{ref}$ is obtained on a highly refined mesh with $h = 0.0015625$.

\begin{figure}[H]
\centering
\includegraphics[width=0.8\textwidth]{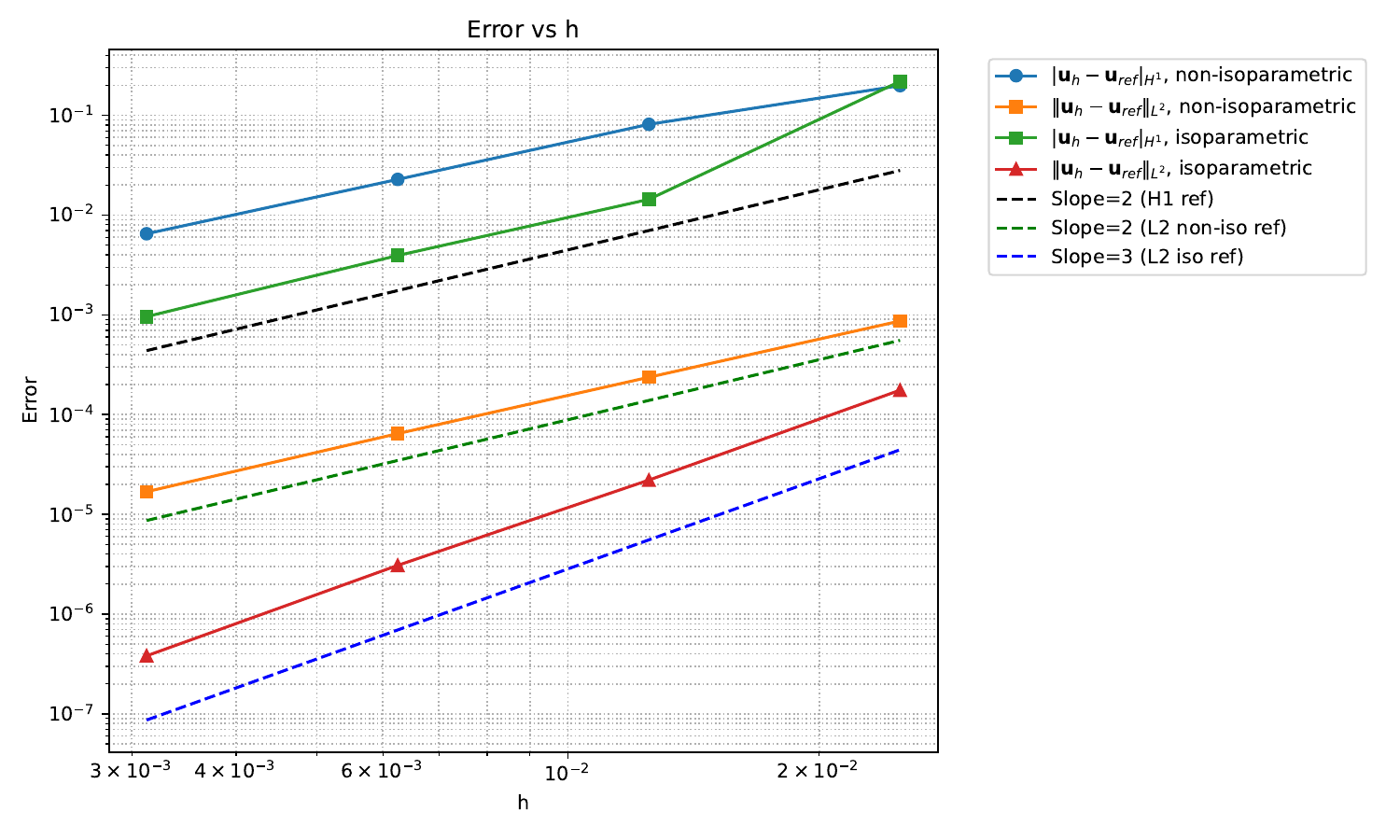}
\caption{Convergence of the velocity for flow around a cylinder.}
\label{fig:convergence_cylinder}
\end{figure}

\begin{table}[htbp]
\centering
\caption{Convergence results for flow around a cylinder using standard Taylor-Hood and isoparametric elements.}
\label{tab:convergence_cylinder}
\begin{tabular}{c 
                S[table-format=0.4] c 
                S[table-format=1.4e-1] c 
                S[table-format=0.4] c 
                S[table-format=1.4e-1] c}
\toprule
& \multicolumn{4}{c}{Standard Taylor-Hood element} & \multicolumn{4}{c}{Isoparametric element} \\
\cmidrule(lr){2-5} \cmidrule(lr){6-9}
{$h$} & {$|\bu_h - \bu_{ref}|_{H^1}$} & {order} & {$\|\bu_h - \bu_{ref}\|_{L^2}$} & {order} & {$|\bu_h - \bu_{ref}|_{H^1}$} & {order} & {$\|\bu_h - \bu_{ref}\|_{L^2}$} & {order} \\
\midrule
0.025    & 0.1985    & {---} & 8.6753e-04 & {---} & 0.2189    & {---} & 1.7570e-04 & {---} \\
0.0125   & 0.0813    & 1.29  & 2.3682e-04 & 1.87  & 0.0144    & 3.93  & 2.2074e-05 & 2.99  \\
0.00625  & 0.0228    & 1.83  & 6.4496e-05 & 1.88  & 0.0039    & 1.87  & 3.0817e-06 & 2.84  \\
0.003125 & 0.0065    & 1.81  & 1.6876e-05 & 1.93  & 0.0009    & 2.04  & 3.8174e-07 & 3.01  \\
\bottomrule
\end{tabular}
\end{table}

The results clearly show that the standard Taylor-Hood element fails to achieve optimal convergence rates due to the geometric approximation error on the curved boundary. In contrast, the isoparametric formulation preserves optimal convergence in velocity, confirming its necessity for problems involving curved interfaces.

\subsubsection{Particle in shear flow}

We consider a benchmark problem involving a particle suspended in a shear flow, previously studied in \cite{krause2017incompressible,hollbacher2020gradient}. The purpose of this test is to verify the ability of the numerical scheme to accurately capture the rotational motion of a rigid particle under shear.

The computational domain is $\Omega = [-3, 3] \times [-2, 2]$, with a circular particle of radius $r = 0.9$ initially centered at the origin $(0, 0)$. Both the fluid and solid densities are set to $\rho_f = \rho_s = 1$, and the fluid viscosity is $\mu_f = 0.01$. A shear velocity profile $\bu_f = (u_s y, 0)$ is imposed on the top and bottom boundaries where $u_s$ is a constant, while no-slip conditions are applied on the particle surface. The initial translational and angular velocities of the particle are zero.

To assess temporal and spatial convergence, we perform a refinement study by successively halving both the mesh size $h$ and the time step $\Delta t$. We set $u_s = 0.02$, following \cite{hollbacher2020gradient}, for which the analytical angular velocity of the particle is known to be $\bomega = 0.005$.

We define the root-mean-square (RMS) error in the angular velocity as:
\begin{equation}
    e_{\bomega,h} = \sqrt{\frac{1}{N_t} \sum_{i=1}^{N_t} \left| \bomega_h(t_i) - \bomega_{\text{ref}}(t_i) \right|^2},
\end{equation}
where $\bomega_h(t_i)$ denotes the computed angular velocity at time $t_i$, and $\bomega_{\text{ref}}$ is the reference solution obtained using the finest mesh and smallest time step (i.e., $h = 0.075$, $\Delta t = 0.3125$).

The convergence results are presented in Table~\ref{tab:convergence_shear} and Figure~\ref{fig:convergence_shear}. We observe that the first-order scheme achieves approximately first-order convergence in time, while the second-order scheme exhibits more than second-order convergence, likely due to the dominance of spatial errors at these parameter settings. This confirms the correctness and accuracy of the temporal discretization implementation.

\begin{figure}[htbp]
\centering
\includegraphics[width=0.7\textwidth]{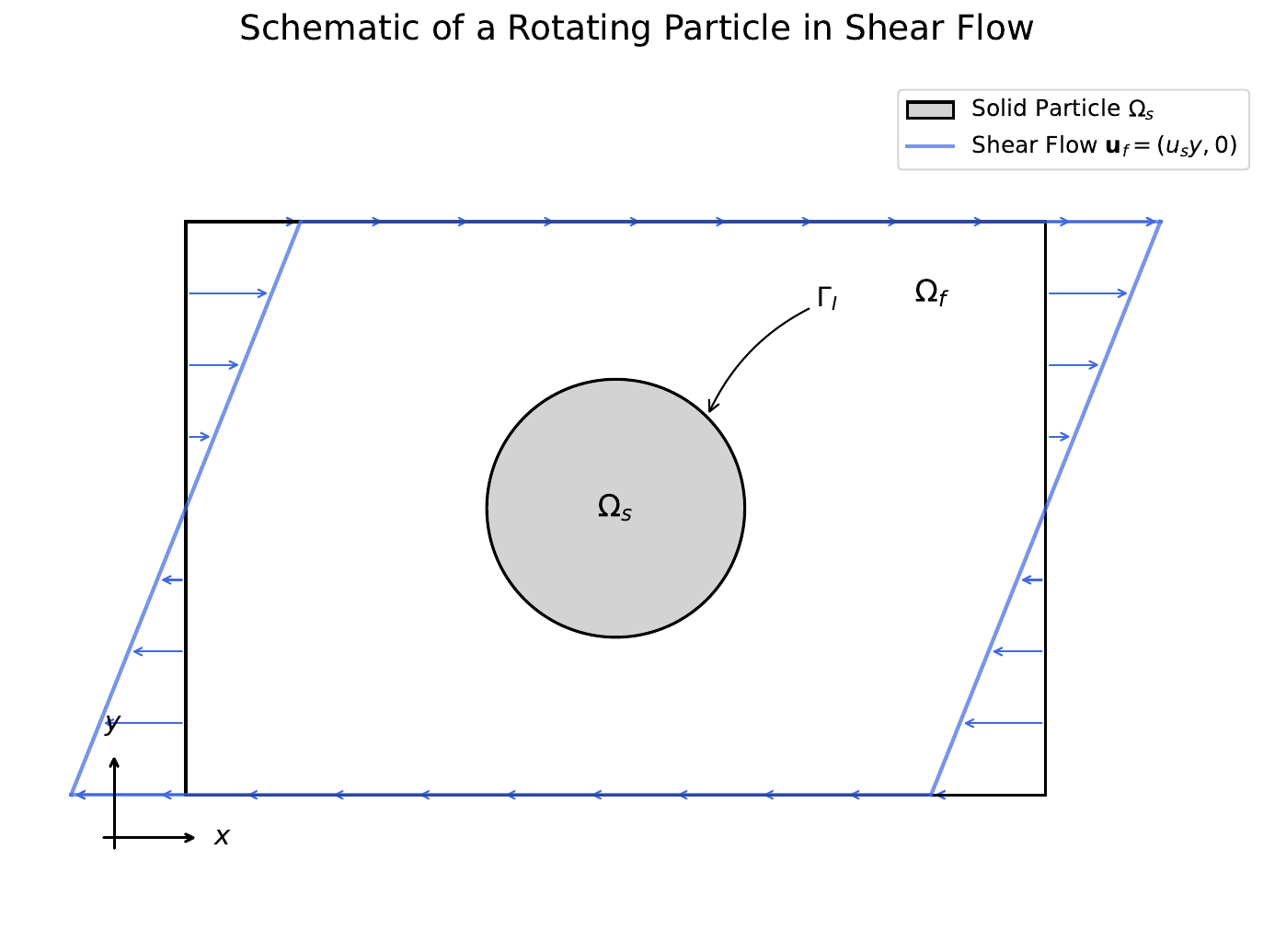}
\caption{Geometry and boundary conditions for the shear flow problem.}
\label{fig:Shear}
\end{figure}

\begin{table}[htbp]
\centering
\caption{Convergence of the angular velocity of the particle in shear flow.}
\label{tab:convergence_shear}
\begin{tabular}{
    S[table-format=1.3]
    S[table-format=1.2e-1] c
    S[table-format=1.2e-1] c
}
\toprule
& \multicolumn{2}{c}{PRK1} & \multicolumn{2}{c}{PRK2} \\
\cmidrule(lr){2-3} \cmidrule(lr){4-5}
{$\Delta t$} & {$e_{\boldsymbol{\omega},h}$} & {order} & {$e_{\boldsymbol{\omega},h}$} & {order} \\
\midrule
2.5    & 4.26e-05 & {---} & 1.19e-06 & {---} \\
1.25   & 2.22e-05 & 0.94  & 2.64e-07 & 2.17  \\
0.625  & 1.14e-05 & 0.97  & 5.19e-08 & 2.35  \\
\bottomrule
\end{tabular}
\end{table}

\begin{figure}[htbp]
\centering
\includegraphics[width=0.7\textwidth]{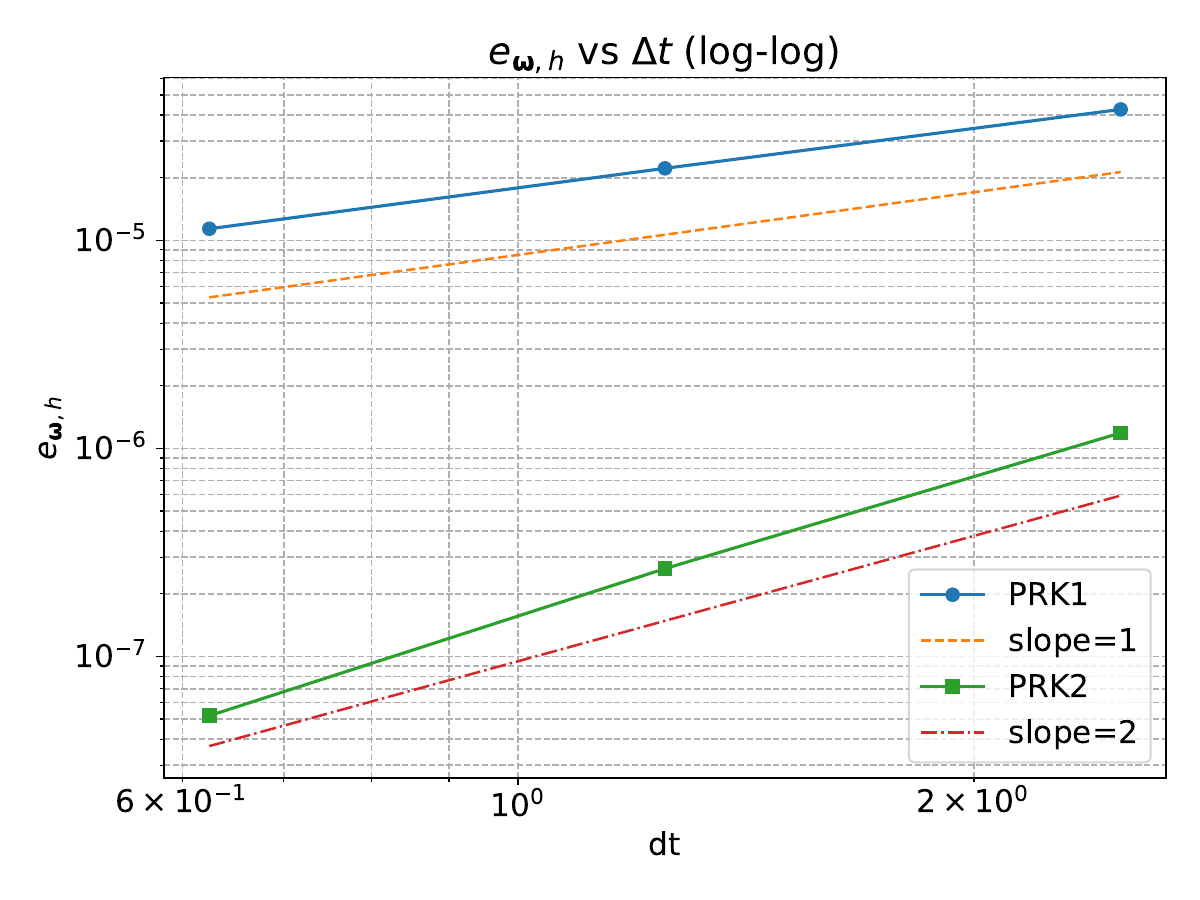}
\caption{Convergence of the angular velocity error in shear flow.}
\label{fig:convergence_shear}
\end{figure}

\subsubsection{A sedimenting particle}
We consider a benchmark problem of a particle sedimenting under gravity in a vertical channel, designed to validate the capability of the numerical scheme in capturing translational motion in fluid-structure interaction (FSI) problems. The geometry and boundary conditions are illustrated in Figure~\ref{fig:Sedimentation}.

\begin{figure}[htbp]
\centering
\includegraphics[width=0.6\textwidth]{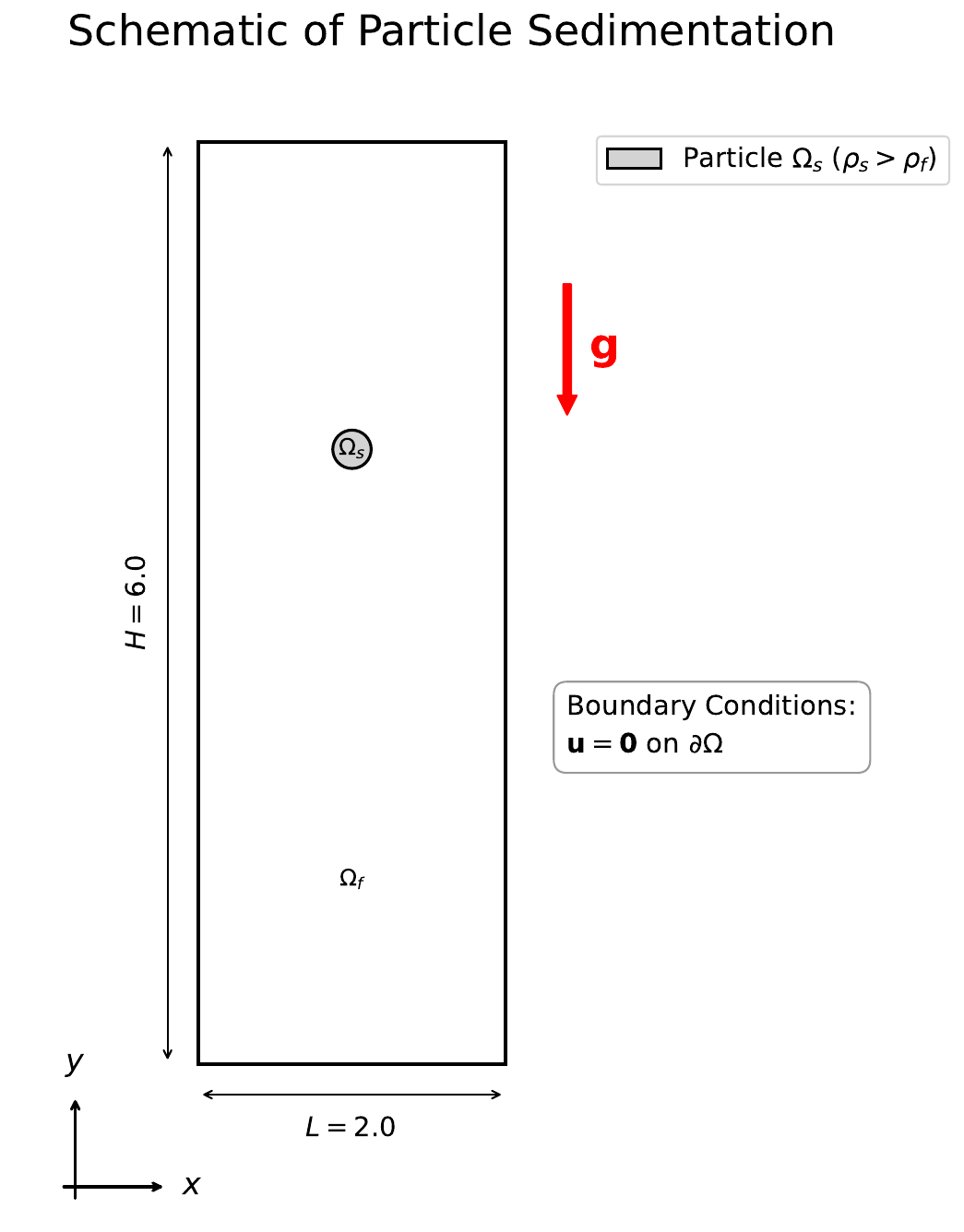}
\caption{Geometry and boundary conditions for the particle sedimentation problem.}
\label{fig:Sedimentation}
\end{figure}

The computational domain is $\Omega = [0,2] \times [0,6]$. At $t=0$, a circular particle of diameter $d = 0.25$ and density $\rho_s = 1.25$ is placed at $(1,4)$, settling through an incompressible fluid with density $\rho_f = 1$ and viscosity $\mu_f = 0.1$. Gravity drives the motion in the negative $y$-direction. We focus on the vertical displacement $\mathbf{x}_y$ and vertical velocity $\mathbf{U}_y$ of the particle.

Due to the finite width $D = 2$ of the channel, wall effects influence the terminal velocity. Following \cite{hollbacher2019rotational}, the asymptotic wall correction force $F_w(d, D)$ and the stationary sedimentation velocity $\mathbf{U}_{st}$ are given by:
\begin{equation}
    \begin{aligned}
        F_w(d, D) &= \ln \left(\frac{D}{d}\right) - 0.9157 + 1.7244\left(\frac{d}{D}\right)^2 - 1.7302\left(\frac{d}{D}\right)^4 + O\left(\left(\frac{d}{D}\right)^6\right), \\
        \mathbf{U}_{st} &= \frac{(\rho_s - \rho_f) d^2}{16 \mu_f} F_w(d, D) \, g,
    \end{aligned}
\end{equation}
where $g = 9.81$ is the gravitational acceleration. The theoretical terminal velocity is $\mathbf{U}_{st} \approx 0.114$ in magnitude.
We observe that the particle velocity approaches steady state by $t = 5$. Figure~\ref{fig:Sedimentation-5s} shows the vertical position and velocity at this time, computed using different time steps. The results indicate good agreement with the theoretical value.

\begin{figure}[htbp]
    \begin{subfigure}{0.49\textwidth}
        \centering
        \includegraphics[width=0.95\textwidth]{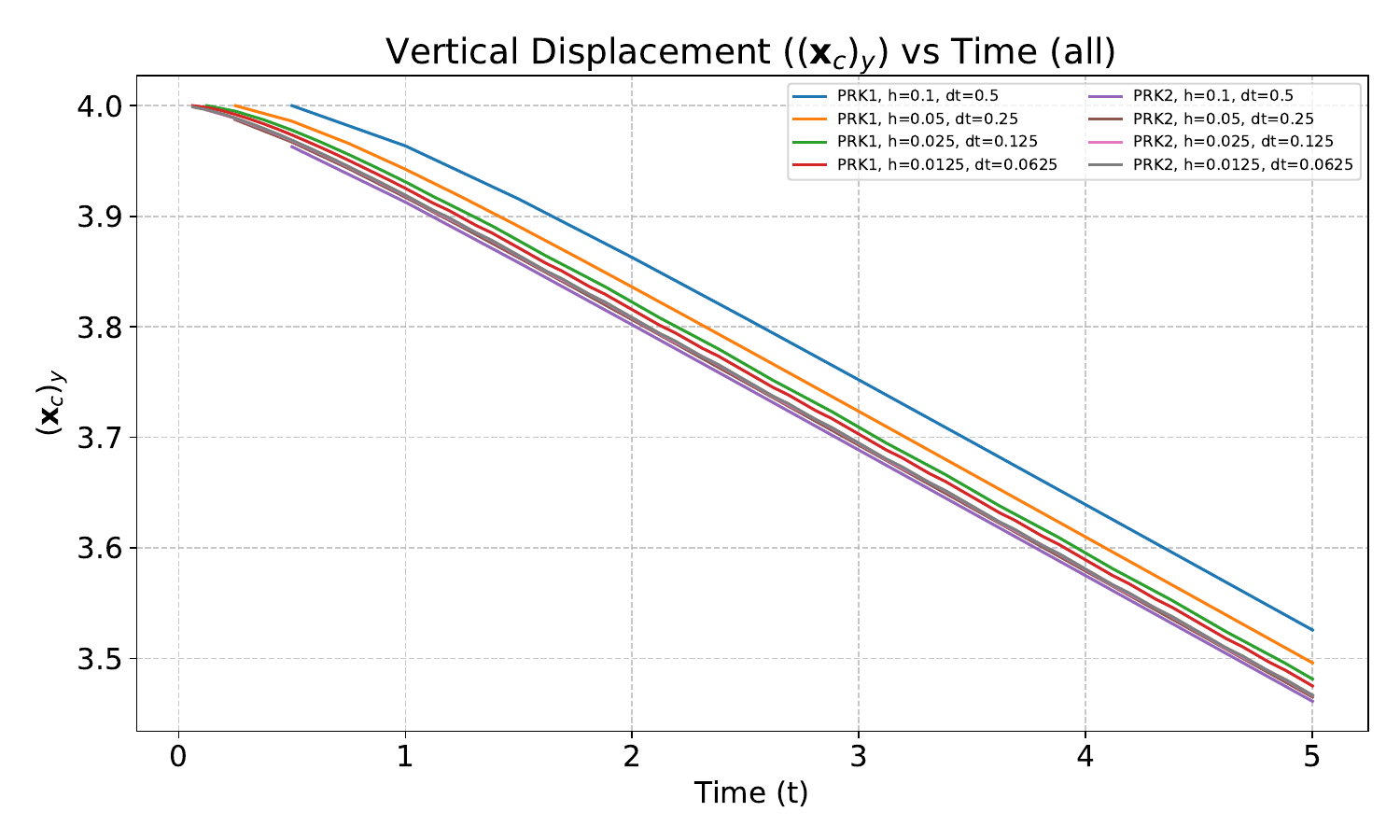}
        \caption{Evolution of the vertical position of the particle.}
    \end{subfigure}
    \begin{subfigure}{0.49\textwidth}
        \centering
        \includegraphics[width=0.95\textwidth]{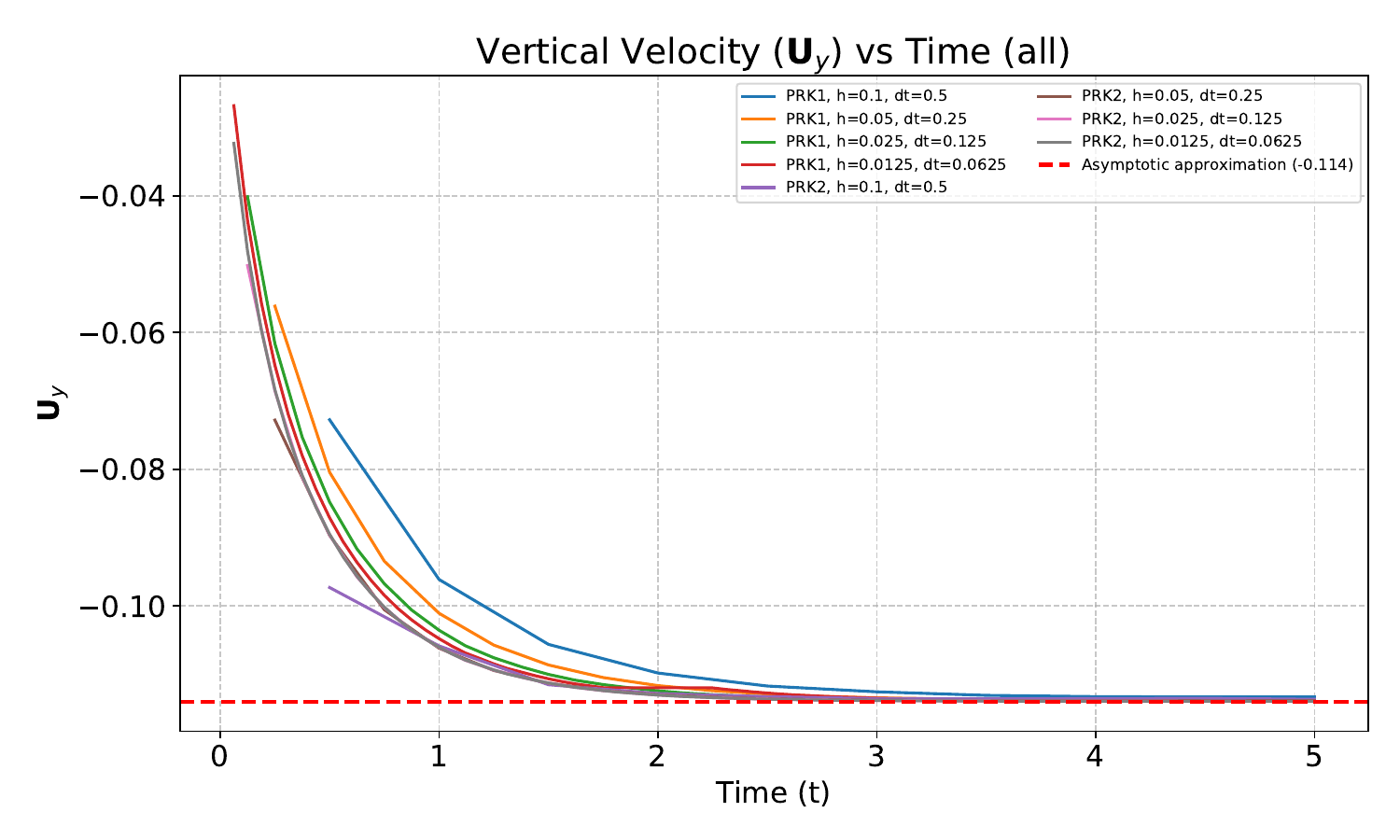}
        \caption{Evolution of the vertical velocity of the particle.}
    \end{subfigure}
    \caption{Evolution of the vertical position and velocity of the sedimenting particle for different time steps. The dashed line indicates the theoretical terminal velocity.}
    \label{fig:Sedimentation-5s}
\end{figure}

To assess convergence at fixed time, we compute errors in both position and velocity relative to a reference solution obtained on the finest mesh ($h = 0.05$) and smallest time step ($\Delta t = 0.0625$). The convergence results at $t = 5$ are reported in Table~\ref{tab:sedimentation_convergence_t=5}. We observe first-order convergence for the PRK1 scheme  and nearly second-order convergence for the PRK2 scheme in the position. The velocity exhibits higher-than-expected convergence rates, which we attribute to the near-steady state where temporal errors are small and spatial discretization dominates.

\begin{table}[htbp]
\centering
\caption{Convergence of position and velocity at $t = 5$ seconds.}
\label{tab:sedimentation_convergence_t=5}
\begin{tabular}{
    S[table-format=0.3]
    S[table-format=1.4]    c
    S[table-format=1.4]    c
    S[table-format=1.2e-1] c
    S[table-format=-1.4]   c
    S[table-format=1.2e-1] c
    S[table-format=1.2e-1] c
}
\toprule
\multicolumn{7}{c}{PRK1} \\
\midrule
{$\Delta t$} & {$\mathbf{x}_y$} & {error} & {order} & {$\mathbf{U}_y$} & {error} & {order} \\
\midrule
0.5    & 3.5258 & 0.0589 & {---} & -0.1133 & 6.41e-04 & {---} \\
0.25   & 3.4960 & 0.0292 & 1.01  &  0.1138 & 1.48e-04 & 2.11  \\
0.125  & 3.4814 & 0.0146 & 1.00  & -0.1139 & 4.67e-05 & 1.67  \\
\addlinespace
\toprule
\multicolumn{7}{c}{PRK2} \\
\midrule
{$\Delta t$} & {$\mathbf{x}_y$} & {error} & {order} & {$\mathbf{U}_y$} & {error} & {order} \\
\midrule
0.5    & 3.4613 & 0.0056 & {---} & -0.1140 & 2.96e-04 & {---} \\
0.25   & 3.5181 & 0.0017 & 1.71  & -0.1140 & 8.70e-05 & 2.88  \\
0.125  & 3.4664 & 0.0004 & 2.01  & -0.1140 & 1.20e-05 & 1.77  \\
\bottomrule
\end{tabular}
\end{table}

To evaluate the temporal convergence order more rigorously, we compute the error over the entire time interval $[0, 5]$, using the finest-resolution solution as the reference. The RMS errors for velocity and angular velocity are defined as:
\begin{equation}
    \begin{aligned}
        e_{\mathbf{U},h} &= \sqrt{\frac{1}{N_t} \sum_{i=1}^{N_t} \left| \mathbf{U}_h(t_i) - \mathbf{U}_{\text{ref}}(t_i) \right|^2}, \\
        e_{\boldsymbol{\omega},h} &= \sqrt{\frac{1}{N_t} \sum_{i=1}^{N_t} \left| \boldsymbol{\omega}_h(t_i) - \boldsymbol{\omega}_{\text{ref}}(t_i) \right|^2}.
    \end{aligned}
\end{equation}

The results are presented in Table~\ref{tab:convergence_sedimentation}. The velocity error converges at the expected rate (first-order for the PRK1 scheme, second-order for the PRK2 scheme). The angular velocity error converges at a higher rate, which may be attributed to the symmetry of the flow and the high accuracy of the boundary approximation, despite the absence of physical rotation — the small computed $\boldsymbol{\omega}$ arises purely from numerical asymmetry.

\begin{table}[htbp]
\centering
\caption{Temporal convergence of velocity and angular velocity over $[0, 5]$.}
\label{tab:convergence_sedimentation}
\begin{tabular}{c 
                S[table-format=1.2e-1] c 
                S[table-format=1.2e-1] c 
                S[table-format=1.2e-1] c 
                S[table-format=1.2e-1] c}
\toprule
& \multicolumn{4}{c}{PRK1} & \multicolumn{4}{c}{PRK2} \\
\cmidrule(lr){2-5} \cmidrule(lr){6-9}
{$\Delta t$} 
& {$e_{\mathbf{U},h}$} & {order} & {$e_{\boldsymbol{\omega},h}$} & {order} 
& {$e_{\mathbf{U},h}$} & {order} & {$e_{\boldsymbol{\omega},h}$} & {order} \\
\midrule
0.5    & 5.34e-02 & {---} & 6.38e-05 & {---} & 6.12e-03 & {---} & 6.14e-05 & {---} \\
0.25   & 2.67e-02 & 1.01  & 7.39e-06 & 3.11  & 1.82e-03 & 1.75  & 2.98e-06 & 4.36  \\
0.125  & 1.34e-02 & 1.00  & 2.78e-07 & 4.73  & 4.33e-04 & 2.07  & 3.61e-07 & 3.04  \\
\bottomrule
\end{tabular}
\end{table}

\begin{figure}[htbp]
    \centering
    \includegraphics[width=0.7\textwidth]{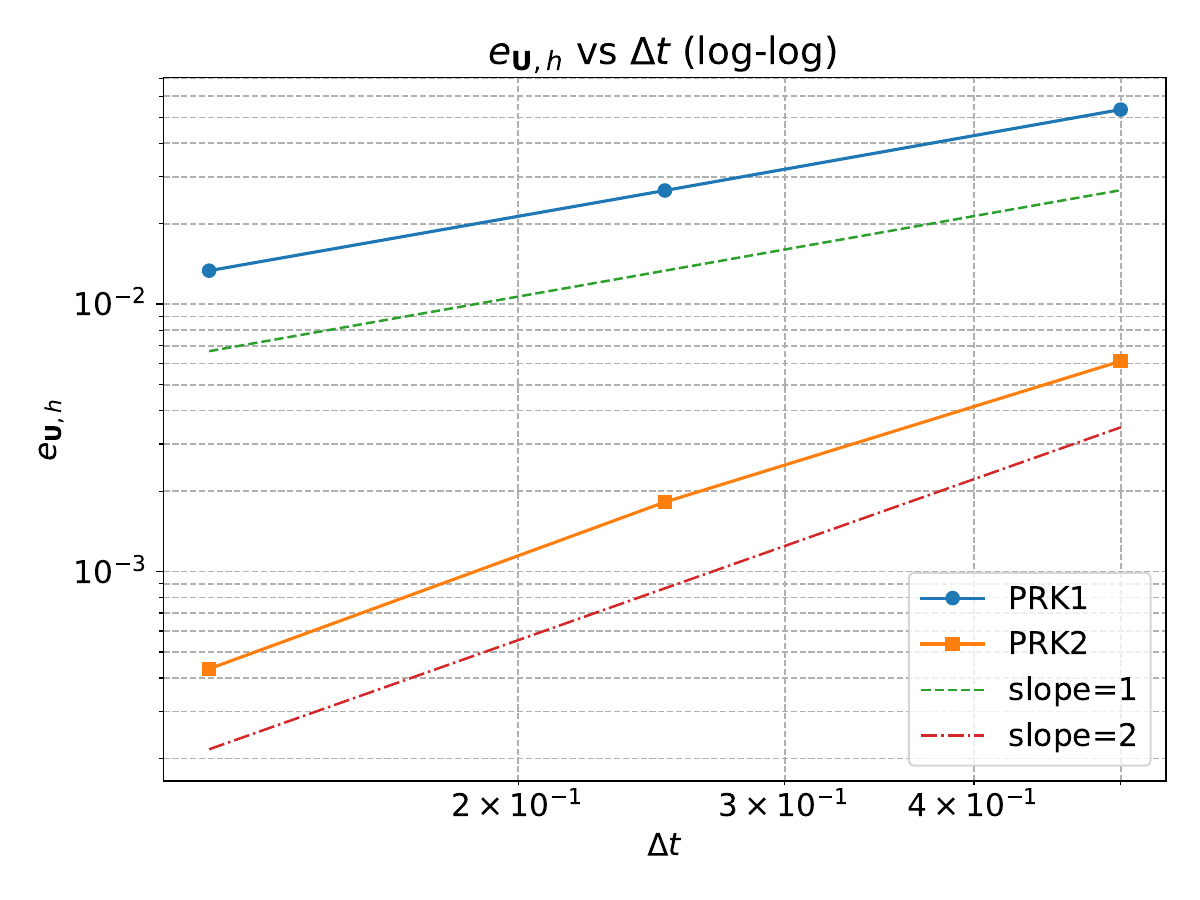}
    \caption{Convergence of the velocity error $e_{\mathbf{U},h}$ in the particle sedimentation problem.}
    \label{fig:convergence_V}
\end{figure}

\begin{figure}[htbp]
    \centering
    \includegraphics[width=0.7\textwidth]{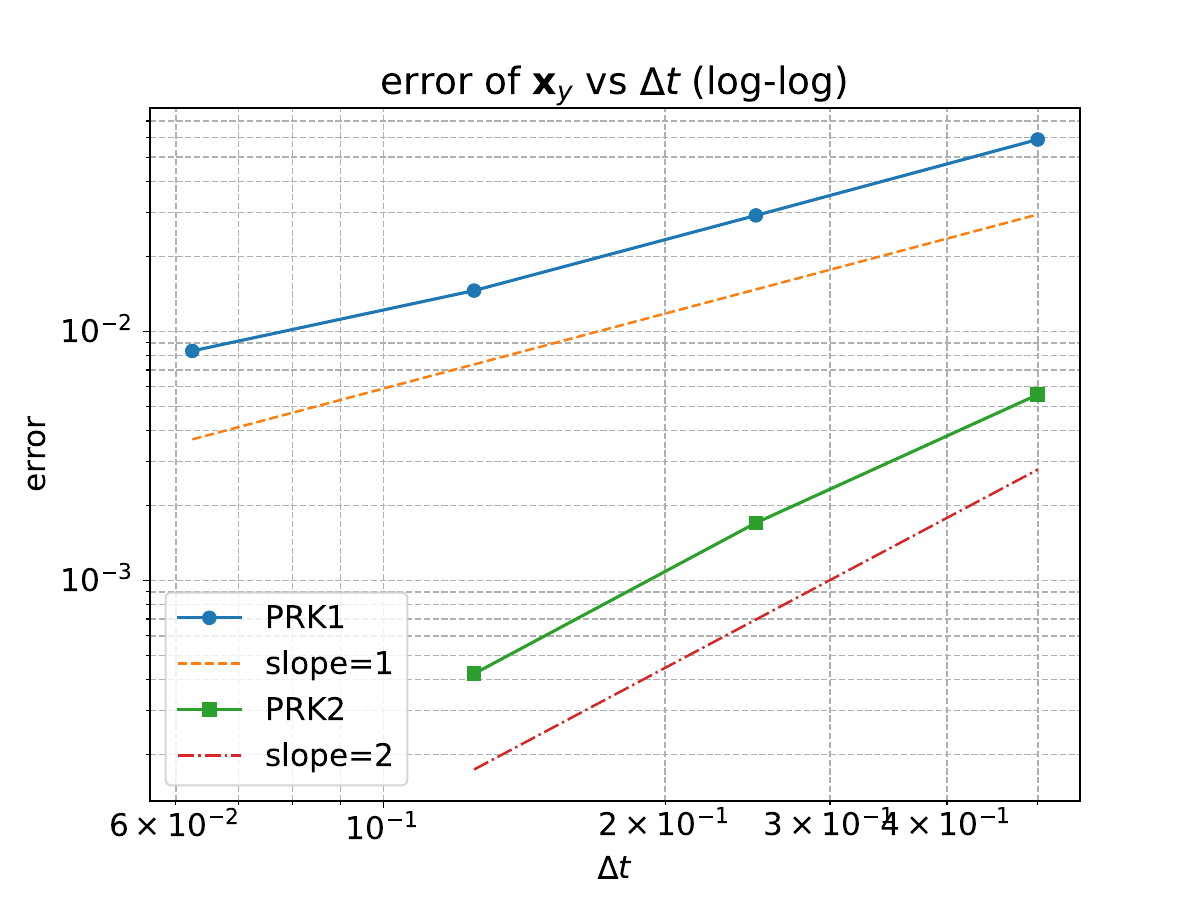}
    \caption{Convergence of the vertical position error at $t = 5$ in the particle sedimentation problem.}
    \label{fig:convergence_x}
\end{figure}

\subsubsection{A test case with two pillars}\label{DLD-B1}

We consider a benchmark configuration involving two pillars and a circular particle in a microfluidic setting, representative of DLD devices. The geometry and boundary conditions are illustrated in Figure~\ref{fig:DLD-B1}. The fluid density is $\rho_f = 1.0\times10^{-12}\ \mathrm{g/\mu m^3}$ and the dynamic viscosity is $\mu_f = 1.0\times10^{-9}\ \mathrm{g/(\mu m \cdot ms)}$. With the characteristic length taken as the pillar diameter $L = 35\ \mathrm{\mu m}$ and the characteristic velocity $u_0 = 30\ \mathrm{\mu m/ms}$, the resulting Reynolds number is $\Reynolds = \rho_f u_0 L / \mu_f = 1.05$.

\begin{figure}[htbp]
\centering
\includegraphics[width=0.8\textwidth]{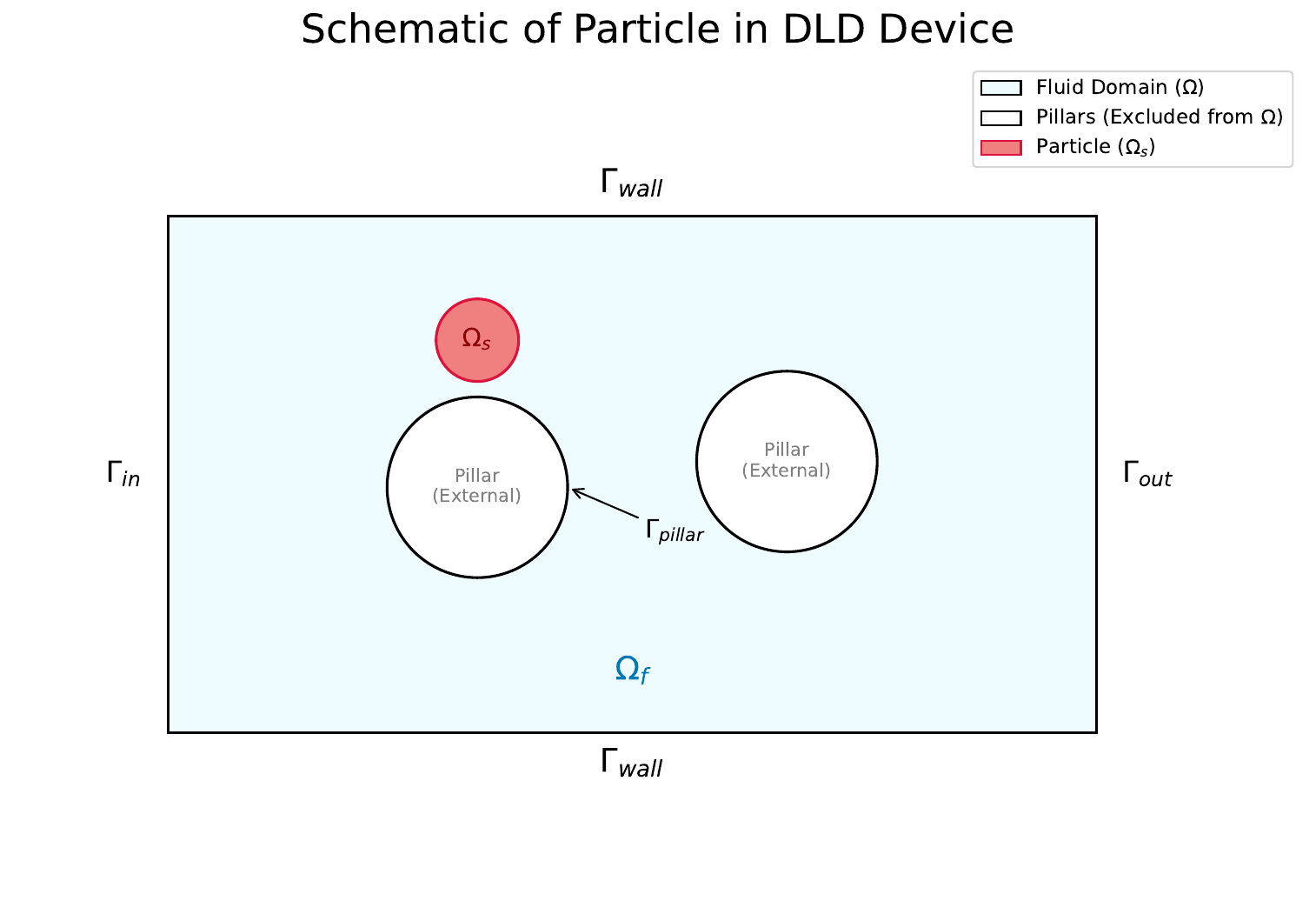}
\caption{Geometry and boundary conditions for the DLD-B1 test case.}
\label{fig:DLD-B1}
\end{figure}

A time-dependent inflow velocity is prescribed on the inlet $\Gamma_{\mathrm{in}}$:
\begin{equation}\label{eq:inflow-B1}
    \mathbf{u} = \left\{
    \begin{aligned}
    &\frac12\left[\sin\left(\left(\frac{t}{T} - 0.5\right)\pi\right) + 1\right] u_0 \frac{y(W - y)}{W^2} \mathbf{e}_x, && \text{if } t \leq T, \\
    & u_0 \frac{y(W - y)}{W^2} \mathbf{e}_x, && \text{if } t > T,
    \end{aligned}
    \right.
\end{equation}
where $T = 0.1\ \mathrm{ms}$. No-slip conditions are applied on the channel walls $\Gamma_{\mathrm{wall}}$ and the pillar boundaries $\Gamma_{\mathrm{pillar}}$. The particle, with radius $R = 8.0\ \mathrm{\mu m}$ and density matching the fluid (neutrally buoyant), is initially placed at $(60, 76)\ \mathrm{\mu m}$. Both the fluid and particle velocities are initialized to zero. The simulation terminates when the $x$-coordinate of the particle's center of mass exceeds $160\ \mathrm{\mu m}$.

To assess temporal convergence, we perform a refinement study with fixed mesh size $h = 1\,\mu\mathrm{m}$ and varying time steps $\Delta t$. The reference solution is taken as the numerical result computed with the smallest time step ($\Delta t = 0.00625\,\mathrm{ms}$). We use the root-mean-square (RMS) errors over the full simulation interval $[0, T_{\mathrm{end}}]$ with the same definitions as before.

The convergence results are presented in Table~\ref{tab:convergence} and visualized in Figure~\ref{fig:convergence}. We observe that the particle trajectory $\mathbf{x}_c$, which is the primary quantity of interest in DLD applications, converges at the expected rate: approximately first-order for the PRK1 scheme and second-order for the PRK2 scheme. In contrast, the convergence rates for velocity $\mathbf{U}$ and angular velocity $\boldsymbol{\omega}$ are lower than optimal, particularly for the second-order method.  Such behavior is commonly observed in FSI simulations, where derived quantities often exhibit reduced convergence. 


\begin{table}[htbp]
\centering
\caption{Temporal convergence analysis for the DLD-B1 test case. Errors are measured in RMS norm over the simulation interval.}
\label{tab:convergence}
\begin{tabular}{c
                S[table-format=1.4e-1] c
                S[table-format=1.3e-1] c
                S[table-format=1.3e-1] c}
\toprule
\multicolumn{7}{c}{PRK1} \\
\midrule
{$\Delta t$ (ms)} & {$e_{\mathbf{x}_c,h}$ ($\mu$m)} & {order} & {$e_{\mathbf{U},h}$ ($\mu$m/ms)} & {order} & {$e_{\boldsymbol{\omega},h}$ (rad/ms)} & {order} \\
\midrule
0.1     & 9.574e-02 & {---} & 1.36e-03 & {---} & 1.34e-02 & {---} \\
0.05    & 4.230e-02 & 1.18  & 7.49e-04 & 0.86  & 1.02e-02 & 0.40  \\
0.025   & 1.951e-02 & 1.12  & 4.67e-04 & 0.68  & 8.60e-03 & 0.24  \\
0.0125  & 9.261e-03 & 1.07  & 3.34e-04 & 0.48  & 7.84e-03 & 0.13  \\
\addlinespace
\toprule
\multicolumn{7}{c}{PRK2} \\
\midrule
{$\Delta t$ (ms)} & {$e_{\mathbf{x}_c,h}$ ($\mu$m)} & {order} & {$e_{\mathbf{U},h}$ ($\mu$m/ms)} & {order} & {$e_{\boldsymbol{\omega},h}$ (rad/ms)} & {order} \\
\midrule
0.1     & 2.170e-02 & {---} & 6.27e-04 & {---} & 3.65e-03 & {---} \\
0.05    & 4.378e-03 & 2.31  & 3.23e-04 & 0.96  & 1.74e-03 & 1.07  \\
0.025   & 7.084e-04 & 2.63  & 1.75e-04 & 0.88  & 8.11e-04 & 1.10  \\
0.0125  & 1.355e-04 & 2.39  & 8.56e-05 & 1.03  & 3.48e-04 & 1.22  \\
\bottomrule
\end{tabular}
\end{table}

\begin{figure}[htbp]
\centering
\includegraphics[width=0.7\textwidth]{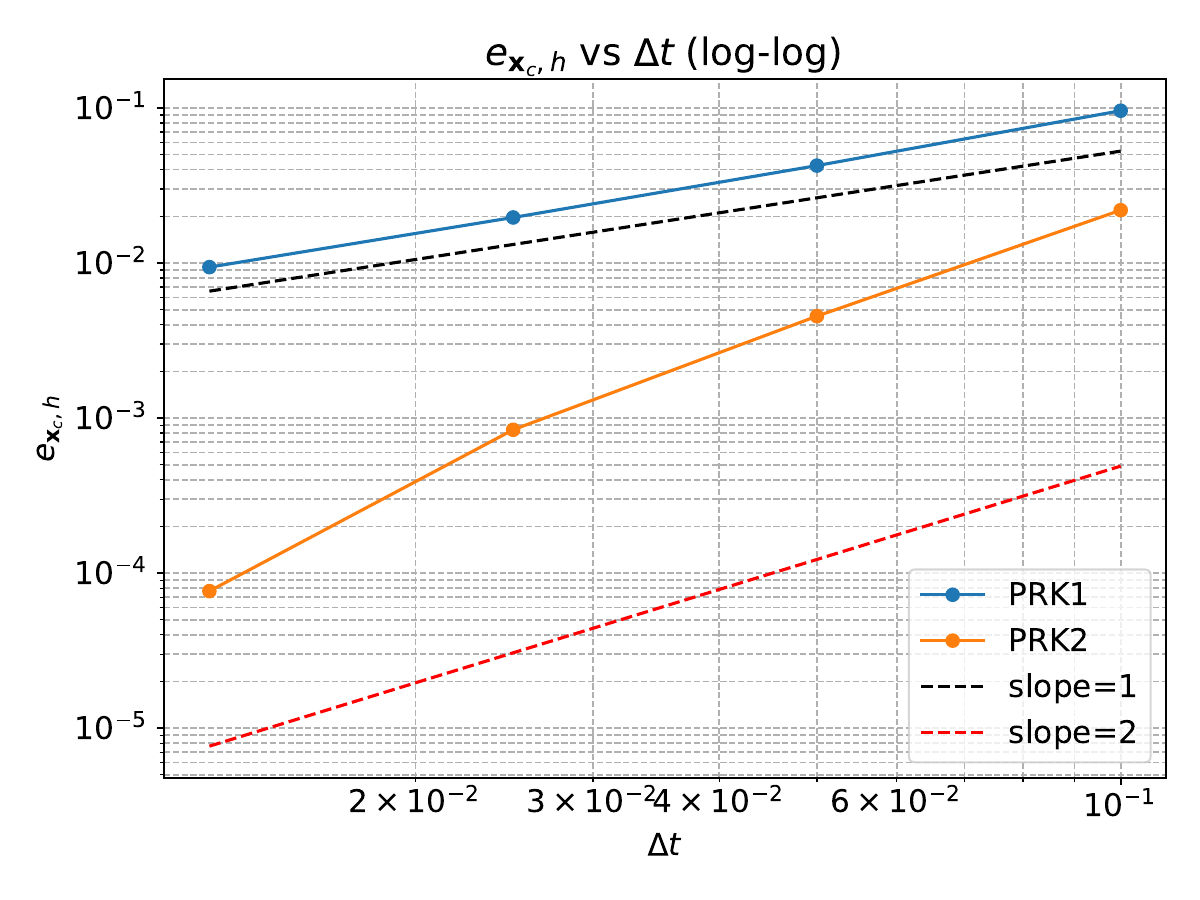}
\caption{Convergence of the RMS error in particle trajectory $e_{\mathbf{x}_c}$ in the DLD test case.}
\label{fig:convergence}
\end{figure}

\subsection{Performance of the preconditioner}

We evaluate the efficiency of the proposed preconditioner $\mathbf{P}$ defined in \eqref{eq:preconditioner} for solving the linearized discrete system~\eqref{eq:linearised_discretized}.

We apply the GMRES method to solve this system and assess the convergence behavior under mesh and time-step refinement. The iteration is terminated when the relative residual satisfies
\begin{equation}
    \frac{\|\mathbf{r}_k\|}{\|\mathbf{b}\|} < 10^{-6},
\end{equation}
where $\mathbf{r}_k$ is the residual at the $k$-th iteration and $\mathbf{b}$ is the right-hand side vector.

Since the nonlinear system is solved using Newton's method, we define $N_{\text{GMRES}}$ as the average number of GMRES iterations per Newton iteration, further averaged over the first 10 time steps. This metric reflects the overall solver efficiency during the initial transient phase, where the solution evolves rapidly and coupling effects are most pronounced.

The test case from Section~\ref{DLD-B1} is used, with mesh sizes $h = 1, 0.5, 0.25, 0.125\,\mu\mathrm{m}$ and time steps $\Delta t = 1/640, 1/320, 1/160\,\mathrm{ms}$. The results are reported in Table~\ref{tab:gmres_refinement}, 
which confirm that $N_{\text{GMRES}}$ remains nearly constant across different mesh and time-step sizes, with values ranging between approximately 18 and 27. 


\begin{table}[htbp]
\centering
\caption{Average number of GMRES iterations ($N_{\text{GMRES}}$) for the preconditioned system under spatial and temporal refinement.}
\label{tab:gmres_refinement}
\begin{tabular}{c S[table-format=2.2] S[table-format=2.2] S[table-format=2.2]}
\toprule
{$h\,(\mu\mathrm{m})$}  & {$\Delta t = 1/160$}  & {$\Delta t = 1/320$}& {$\Delta t = 1/640$}\\
\midrule
1      & 18.40  & 17.95  & 25.00\\
0.5    & 18.74  & 21.10  & 24.00\\
0.25   & 19.05  & 21.30  & 27.70\\
0.125  & 18.50  & 22.75  & 26.90\\
\bottomrule
\end{tabular}
\end{table}

Crucially, the absence of a systematic increase in iteration count as $h \to 0$ or $\Delta t \to 0$ confirms that the preconditioner is robust and scalable. This verifies the theoretical analysis presented in Section~\ref{sec:preconditioner}, demonstrating that $\mathbf{P}$ effectively clusters the eigenvalues of the system matrix independent of discretization parameters.

\section{Conclusions}
In this work we proposed a high-order fitted-mesh DLM-ALE framework for fluid-rigid-body interaction, motivated by trajectory-sensitive transport in DLD microfluidic devices. The fitted-mesh DLM discretization preserves a sharp interface and avoids the interpolation-induced smearing of classical two-grid DLM/FD approaches.

We coupled the method with a partitioned IMEX Runge-Kutta time integrator that advances the mesh motion explicitly and treats the DLM-constrained flow subsystem implicitly, yielding high-order accurate rigid-body trajectories at reduced cost compared with fully implicit monolithic schemes. A continuous and discrete well-posedness analysis of the linearized generalized Stokes formulation provides the basis for an operator-based block preconditioner, and the numerical experiments confirm high-order convergence together with robust GMRES performance over a range of meshes and time steps.

Future work will extend the framework to deformable particles and microscale interfacial models (e.g., Navier slip), and apply it to device-scale DLD geometries and design studies.

\bibliography{reference}
\bibliographystyle{plain}

\end{document}